\newtheorem{remark}{Remark}
\newtheorem{theorem}{Theorem}
\newtheorem{lemma}{Lemma}
\newtheorem{corollary}{Corollary}
\newtheorem{pretheorema}{{\bf Theorem}}
\newtheorem{prelem}{{\bf Theorem}}
\newenvironment{lem}{\begin{prelem}{\hspace{-0.5
               mm}{\bf}}}{\end{prelem}}
\newtheorem{precor}{{\bf Corollary}}
\newtheorem{prelemlem}{{\bf Lemma}}
\newtheorem{preconstruction}{{\bf Construction}}
\newenvironment{construction}{\begin{preconstruction}{\hspace{-0.5
               mm}{\bf}}}{\end{preconstruction}}
\author{Hanieh Amjadi
  \and Nasrin Soltankhah\thanks{Corresponding author}}
\title[The 3-way flower intersection problem for Steiner triple systems]{The 3-way flower intersection problem for Steiner triple systems}
\affiliation{
  Faculty of Mathematical Sciences, Alzahra University, Tehran, Iran}
\keywords{Steiner triple system, $3$-way intersection, $3$-way flower intersection, Pairwise balanced design, Group divisible design, Latin square.}
\begin{document}
\publicationdetails{22}{2020}{1}{5}{5706}
\maketitle
\begin{abstract}
The flower at a point $x$ in a Steiner triple system $(X,\mathcal{B})$ is the set of all triples containing $x$. Denote by $J^3_F(r)$ the set of all integers $k$ such that there exists a collection of three $STS(2r+1)$ mutually intersecting in the same set of $k+r$ triples, $r$ of them being the triples of a common flower. In this article we determine the set $J^3_F(r)$ for any positive integer $r\equiv 0,1$ (mod $3$) (only some cases are left undecided for $r =6,~7,~9,~24$), and establish that $J^3_F(r)=I^3_F (r)$ for $r\equiv 0,1$ (mod $3$) where $I^3_F(r)=\{0,1,\ldots ,\frac{2r(r-1)}{3}-8,\frac{2r(r-1)}{3}-6,\frac{2r(r-1)}{3}\}$.
\end{abstract}

\section{Introduction}
\label{sec:in}
A \emph{Steiner system} $S(2, k, v)$ is a pair $(X,\mathcal{B})$ where $X$ is a $v$-set and $\mathcal{B}$ is a family of $k$-subset of $X$ called blocks, such that each $2$-subset of $X$ is contained in exactly one block of $\mathcal{B}$.
If $k=3$, then the Steiner system $S(2,3,v)$ is called \emph{Steiner triple system} of order $v$ or briefly $STS(v)$. It is well known that an $STS(v)$ exists if and only if $v\equiv 1, 3$ (mod $6$) \cite{Hanani}.

A \emph{Kirkman triple system} of order $v$ (briefly $KTS(v)$) is a Steiner triple system of order $v$, $(X,\mathcal{B})$ together with a partition $\mathcal{R}$ of the set of triples $\mathcal{B}$ into subsets $\mathcal{R}_{1},\mathcal{R}_{2},\ldots ,\mathcal{R}_{n}$ called parallel classes such that each $\mathcal{R}_{i}~($for $i=1, 2, \dots , n)$ is a partition of $X$. It is well known that a $KTS(v)$ exists if and only if $v\equiv 3$ (mod $6$) \cite{Schoolgirl}.

It can be easily checked that the number of triples contained in an $STS(v)$ (or $KTS(v)$) is $t_{v}=v(v-1)/6$. For each non-negative integer $n$, let $S[n]$ denote the set of non-negative integers less than or equal to $n$, with the exception of $n-1$, $n-2$, $n-3$ and $n-5$. Let $I(v)=S[t_{v}]$.

Two Steiner (or Kirkman) triple systems $(X,\mathcal{B}_{1})$ and $(X,\mathcal{B}_{2})$ are said to \emph{intersect} in $k$ triples, provided $|\mathcal{B}_{1}\cap  \mathcal{B}_{2}|=k$. Denote by $J(v)$ (or $J_{R}(v)$) the set of all integer numbers $k$ such that there exists a pair of $STS(v)$ (or $KTS(v)$) intersecting in $k$ triples. 

\cite{2STS} have completely determined the set $J(v)$ and proved that $J(3)=1$, $J(7)=\{0, 1, 3, 7\}$, $J(9)=\{0, 1, 2, 3, 4, 6, 12\}$ and $J(v)=I(v)$ for $v\equiv 1, 3$ (mod $6$) and $v\geq 13$. Also \cite{2KTS-Shen}, \cite{2KTS} have determined the set $J_{R}(v)$.
 
For each non-negative integer $n$, let $S^3[n]$ denote the set of non-negative integers less than or equal to $n$, with the exception of $n-1$, $n-2$, $n-3$, $n-4$, $n-5$ and $n-7$. Let $I^{3}(v)=S^3[t_{v}]$.

\cite{3STS} generalized the intersection problem for $STS(v)$s and determined the set $J^{3}(v)$ of all integer numbers $k$ such that there exists a collection of three $STS(v)$s pairwise intersecting in the same set of $k$ blocks. The following  theorem contains their results.
\begin{lem}\emph{\cite{3STS}}{\label{3STS}}
$J^{3}(v)=I^{3}(v)$ for every $v\geq 19$; $J^{3}(7)=\{1,7\}$, $J^{3}(9)=\{0,1,3,4,12\}$, $J^{3}(13)=I^{3}(13)\setminus \{14,15,16,17,18,20\}$ and $J^{3}(15)=I^{3}(15)\setminus \{24,25,26,27\}$.
\end{lem}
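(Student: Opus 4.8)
The plan is to prove the two small-order facts and the generic identity $J^3(v)=I^3(v)$ by splitting the problem into a necessity inclusion $J^3(v)\subseteq I^3(v)$, a sufficiency inclusion $I^3(v)\subseteq J^3(v)$ for $v\geq 19$, and a separate treatment of the exceptional orders $v\in\{7,9,13,15\}$. The organizing observation throughout is the following. Suppose $(X,\mathcal{B}_1),(X,\mathcal{B}_2),(X,\mathcal{B}_3)$ are three $STS(v)$ pairwise meeting in a common set $C$ of $k$ triples, and set $D_i=\mathcal{B}_i\setminus C$. Any triple lying in two of the $\mathcal{B}_i$ would, by hypothesis, lie in $C$; hence $D_1,D_2,D_3$ are pairwise disjoint, and each covers exactly the same collection of pairs, namely those pairs of $X$ not already covered by $C$. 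Thus $(D_1,D_2,D_3)$ is a \emph{$3$-way trade} of volume $m=t_v-k$, and conversely any such trade embedded in an $STS(v)$ produces three systems with common intersection $k$. The whole problem therefore reduces to understanding which trade volumes $m$ are realizable.

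For the necessity $J^3(v)\subseteq I^3(v)$ I would carry out a purely local analysis of $3$-way trades. Since the three parts cover the same pairs, every point of the foundation appears with the same degree in all three parts, and the underlying graph on the covered pairs admits three mutually triangle-disjoint triangle-decompositions. A degree-and-foundation count then shows that a nonempty $3$-way trade must have volume at least $6$ and cannot have volume exactly $7$ (the candidate small foundations simply do not support three mutually disjoint decompositions). Consequently the defect $m=t_v-k$ lies in $\{0,6\}\cup\{8,9,10,\ldots\}$, which forbids precisely $k\in\{t_v-1,t_v-2,t_v-3,t_v-4,t_v-5,t_v-7\}$ and yields $J^3(v)\subseteq I^3(v)$.

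For the sufficiency $I^3(v)\subseteq J^3(v)$ when $v\geq 19$, the strategy is to realize every admissible defect $m\in\{0,6\}\cup\{8,9,\ldots,t_v\}$ by planting disjoint $3$-way trades of prescribed total volume inside a common skeleton. First I would assemble a toolbox of small ingredient trades of each volume $6,8,9,10,\ldots$, obtained by combining copies of the minimal volume-$6$ trade with elementary trades, and verify that they embed in independent parts of a triple system so that their volumes add. The global control comes from recursive constructions: writing $v$ through a PBD or GDD (the standard $v\mapsto 2v+1$, $v\mapsto 2v+7$ and $v\mapsto 3v$ constructions built from idempotent Latin squares and quasigroups), one fills each block by a triple system whose $3$-way intersection number is chosen independently, so that the set of attainable defects for $v$ is essentially the sumset of those of the ingredients. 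An induction on $v$ seeded by a handful of explicitly constructed base orders then sweeps out all of $I^3(v)$.

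The main obstacle is twofold and lives at the top of the defect range. Producing exactly the volumes $6$ and $8,9,\ldots$ near $t_v$ while provably avoiding the forbidden $7$, and in particular realizing the maximal defect $m=t_v$ (three pairwise disjoint $STS(v)$, a large-set phenomenon), is where the smallest orders misbehave: for $v\in\{7,9,13,15\}$ the available room is too small for the ingredient trades to fit, so these orders must be settled directly or by exhaustive search. This is exactly why $J^3(7),J^3(9),J^3(13),J^3(15)$ come out as proper subsets of the corresponding $I^3(v)$; for instance three pairwise disjoint $STS(7)$ do not exist, forcing $0\notin J^3(7)$. I would therefore handle $v=7,9,13,15$ as isolated base cases and invoke the recursion only once $v\geq 19$ guarantees enough space for the trade ingredients.
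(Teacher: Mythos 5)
The first thing to note is that the paper does not prove this statement at all: it is an imported result, quoted from Milici and Quattrocchi \cite{3STS}, and the paper's ``proof'' is the citation itself (the alphabetic theorem environments in this paper are reserved for known external results). So your proposal cannot be compared against an internal argument; it has to stand on its own as a reconstruction of the cited work, and as such it is a strategy outline rather than a proof. Your reduction to $3$-way trades is correct (with pairwise intersection in a common block set, the differences $D_i=\mathcal{B}_i\setminus C$ are indeed pairwise disjoint and mutually balanced), and the two-part plan --- necessity from minimum trade volumes, sufficiency from recursive constructions seeded by base cases --- is in spirit how results of this type are proved. But every point where the actual mathematical difficulty lies is asserted rather than argued.

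Concretely: (i) the inclusion $J^3(v)\subseteq I^3(v)$ rests entirely on the claims that a nonempty $3$-way trade has volume at least $6$ and never exactly $7$; your ``degree-and-foundation count'' is never carried out, and this is a genuine case analysis over small foundations, not a one-line parity argument. (ii) The exceptional orders are mischaracterized. For $v=13$ the excluded values $14,\ldots,18,20$ correspond to defects $m=t_{13}-k\in\{6,8,9,10,11,12\}$, and for $v=15$ the excluded values $24,\ldots,27$ to defects $8,\ldots,11$; these are exactly the \emph{small} trade volumes that are generically realizable, so what must be proved is that no $3$-way trade of these volumes embeds in any $STS(13)$ (resp.\ $STS(15)$) --- a substantial nonexistence theorem, settled in the cited source by heavy case analysis, which your appeal to ``not enough room'' and ``exhaustive search'' does not supply. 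Note also that your example is off: $J^3(7)=\{1,7\}$ is not a proper subset of $I^3(7)$; one computes $I^3(7)=S^3[7]=\{1,7\}$, so $v=7$ is not an exception in that sense, and $0\notin J^3(7)$ already follows from the impossibility of defect $7$. (iii) On the sufficiency side, realizing every defect in $\{0,6\}\cup[8,t_v]$ for all $v\geq 19$, including the extreme defect $t_v$ (three pairwise disjoint systems), requires exhibiting and verifying the ingredient trades and the recursive constructions; naming ``standard'' $2v+1$, $2v+7$, $3v$ constructions is not enough to guarantee that the attainable defect sets compose to cover the whole interval with no gaps at the top. In short, you have identified the right skeleton, but all of the load-bearing combinatorics remains unproved.
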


\cite{3KTS} generalized the intersection problem for $KTS(v)$s and for sufficiently large $v$ determined the set $J^{3}_R(v)$ of all integer numbers $k$ such that there exists a collection of three $KTS(v)$s pairwise intersecting in the same set of $k$ blocks. Just two cases are left undecided. 

There are some other results on $\mu$-way intersection problem, for example, \cite{Rashidi-intersection} solved the $3$-way intersection problem of $S(2,4,v)$ designs also \cite{Milici86} found some results on the maximum number of $STS$s such that any two of them intersect in the same block-set and \cite{somi} found the maximum number of Steiner triple systems which intersect in some special blocks.

The \emph{flower} at a point $x$ in a Steiner triple system, is the set of all triples containing $x$. The \emph{flower intersection problem} for $STS$s is the determination for each $v=2r+1\equiv 1, 3$ (mod $6$) of the set $J_F(r)$ of all $k$ such that there exists a pair of Steiner triple systems on the same $v$-set having $k+r$ triples in common, $r$ of them being the triples of a common flower. Note that $r\equiv 0, 1$ (mod $3$); we call such a non-negative $r$ \emph{admissible}.

For each admissible $r$, let $I_F(r)=S[\frac{2r(r-1)}{3}]$. \cite{2FSTS} have completely determined the set $J_F(r)$. The following theorem contains their results.
\begin{lem}\emph{\cite{2FSTS}}{\label{2FSTS}}
For all admissible $r$, $J_F(r)=I_F(r)$, except $1,4\notin J_F(4)$.
\end{lem}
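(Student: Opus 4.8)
The plan is to first reduce the flower intersection problem to an intersection problem for group divisible designs. Fix the common point $x$ and the common flower; the $r$ triples through $x$ determine a $1$-factor $F$ on the $2r$ points of $X\setminus\{x\}$, and the remaining blocks of each $STS(2r+1)$ form a $\{3\}$-GDD of type $2^r$ whose groups are the pairs of $F$. Counting blocks, such a GDD has exactly $t_{2r+1}-r=\frac{2r(r-1)}{3}$ blocks, so writing $n=\frac{2r(r-1)}{3}$, the set $J_F(r)$ coincides with the set of $k$ for which two $\{3\}$-GDDs of type $2^r$ on the same groups meet in exactly $k$ blocks, and $I_F(r)=S[n]$ is precisely the target spectrum. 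I would carry out the rest of the argument entirely in this GDD language.

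For the necessity, that $J_F(r)\subseteq I_F(r)$, I would invoke the theory of $(2,3)$-trades. If two such GDDs differ, their block-sets differ by a trade, and the minimum volume of a nonempty triple-system trade is $4$ (the Pasch, or quadrilateral, switch), while no trade of volume $5$ exists, and volumes $4$ and $6,7,8,\dots$ are all realizable. Hence the only possible deficits $n-k$ are $0$, $4$, and every integer $\geq 6$, which is exactly the exclusion of $n-1,n-2,n-3,n-5$ recorded in $S[n]$. The small case $r=4$ (so $v=9$, $n=8$) must be treated separately: here an exhaustive analysis of the $STS(9)$ rules out $k=1$ and $k=4$, yielding the stated exception.

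The bulk of the work is the sufficiency, $I_F(r)\subseteq J_F(r)$, and this is where I expect the main obstacle. For the large values of $k$ near $n$, I would build the second GDD from the first by applying disjoint explicit trades: a single Pasch trade gives $k=n-4$, a volume-$6$ trade gives $n-6$, a volume-$7$ trade gives $n-7$, and disjoint combinations of these reach $n-8,n-9,\dots$ into the interior. To obtain the smaller values (down to $k=0$, a block-disjoint pair) and to fill every intermediate integer without gaps, I would set up recursive constructions: decompose the design along a suitable PBD or GDD, blow up points or groups, and fill the pieces with smaller flower-intersecting systems together with pairs of Latin squares whose intersection spectrum is known. Since these ingredients contribute their intersection numbers additively, a careful choice of the decomposition and of the Latin-square intersections lets the attainable $k$ sweep out all of $\{0,1,\dots,n-6\}$.

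The hardest part is ensuring that the recursion leaves no gaps and that enough small orders are settled directly to seed it. Concretely, I would (i) establish a finite list of base orders by direct or computational construction, (ii) prove a \emph{Wilson-type} lemma showing that admissibility of all $k$ for smaller parameters propagates to larger $r$ through the PBD/GDD/Latin-square composition, and (iii) verify that the exceptional behaviour is confined to $r=4$, so that for every other admissible $r$ the full interval is covered. The delicate bookkeeping lies in matching the additive ranges of the ingredients so that their sumset is exactly $I_F(r)$, and in confirming by hand the few orders too small for the recursion to apply.
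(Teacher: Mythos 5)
First, a structural point: the paper contains no proof of this lemma at all. It is Hoffman and Lindner's theorem, imported wholesale via the citation \cite{2FSTS}, so the only thing to compare your attempt against is the literature proof. Your outline does mirror that proof's shape — delete the flower point to obtain a $\{3\}$-GDD of type $2^r$ with $\frac{2r(r-1)}{3}$ blocks, use trade-volume arguments for the containment $J_F(r)\subseteq I_F(r)$, and use recursive Latin-square/PBD compositions seeded by small cases for the reverse containment. The strategy you describe is the right one.

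The genuine gap is that what you have written is a plan, not a proof: every substantive step is introduced by ``I would'' and none is executed. Concretely: (i) the necessity direction invokes, without proof, the standard facts that the minimum volume of a nonempty triple trade is $4$ and that no trade of volume $5$ exists, and it omits the (easy but needed) observation that the symmetric difference of the two derived GDDs covers no intra-group pair, so the trade argument really does transfer to the GDD setting; (ii) for the top of the spectrum you assert that Pasch, volume-$6$ and volume-$7$ trades can be applied disjointly, but you exhibit none of them inside a GDD of type $2^r$ and do not verify that they exist there for all admissible $r$ — and this is exactly where the exceptional case bites, since $STS(9)$ contains no Pasch configuration, which is the reason $4\notin J_F(4)$, so the existence of these configurations is not automatic and cannot be waved through; (iii) the finite list of base cases, the Wilson-type composition lemma, and the sumset bookkeeping showing the recursion leaves no gaps — which you yourself identify as the hardest part, and which constitute essentially the whole of Hoffman and Lindner's paper — are left entirely undone. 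For a spectrum theorem of this kind the constructions \emph{are} the theorem; as it stands your proposal establishes at most $J_F(r)\subseteq I_F(r)$ modulo quoted trade facts, while the reverse containment and the precise determination of the exception at $r=4$ remain open in your write-up.
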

Also \cite{FS(24v)} solved the flower intersection problem of $S(2,4,v)$ designs; \cite{Milici90} solved the flower intersection problem for $S(3,4,v)$ designs for some values of $v$ and \cite{2FKTS} solved the flower intersection problem of $KTS(v)$s.

Let $I^3_F(r)=S^3[\frac{2r(r-1)}{3}]$ and $J^{3}_{F}(r)$ be the set of all integer numbers $k$ such that there exists a collection of three $STS(2r+1)$s pairwise intersecting in the same $k+r$ blocks, $r$ of them being the triples of a common flower. Here we determine $J^{3}_{F}(r)$ for all admissible values of $r$ except for some small values of $r$.

The necessary conditions are expressed in the following lemma and it is straightforward.
\begin{lemma}{\label{necessary conditions}}
For each admissible $r$, $J^{3}_{F}(r)\subseteq I^3_F(r)$, $J^{3}_{F}(r)\subseteq \{a-r |~ a\in J^3(2r+1),~a\geq r\}$ and $J^{3}_{F}(r)\subseteq J_F(r)$.
\end{lemma}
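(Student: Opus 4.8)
The plan is to verify the three containments separately, obtaining the second and third directly from the definitions and then extracting the first from the second together with the known necessary conditions for the ordinary $3$-way intersection problem. First I would fix $k\in J^3_F(r)$ and let $(X,\mathcal{B}_1),(X,\mathcal{B}_2),(X,\mathcal{B}_3)$ be three $STS(2r+1)$ realizing it: they pairwise meet in the same set $C$ of $k+r$ triples, of which $r$ form a common flower $\mathcal{F}_x$ at some point $x$, so $\mathcal{F}_x\subseteq C$. A count shows that each flower has exactly $r$ triples and that each system has $t_{2r+1}=\frac{r(2r+1)}{3}$ triples, so the number of non-flower triples is $t_{2r+1}-r=\frac{2r(r-1)}{3}$. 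This is the largest value $k$ can take and fixes the top of the admissible range.

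For the third containment I would simply discard the third system: the pair $(X,\mathcal{B}_1),(X,\mathcal{B}_2)$ shares exactly the $k+r$ triples of $C$, $r$ of which form the flower $\mathcal{F}_x$, so by definition $k\in J_F(r)$. For the second containment I would observe that $C$ is precisely the common intersection of all three systems and $|C|=k+r$, whence $k+r\in J^3(2r+1)$; since $k\ge 0$ we have $k+r\ge r$, so $k=a-r$ with $a=k+r\in J^3(2r+1)$ and $a\ge r$.

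For the first containment I would feed the second into the basic necessary condition $J^3(2r+1)\subseteq I^3(2r+1)=S^3[t_{2r+1}]$ for the ordinary problem (Lemma~\ref{3STS}). Writing $n=\frac{2r(r-1)}{3}=t_{2r+1}-r$, every $a\in I^3(2r+1)$ with $a\ge r$ satisfies $0\le a-r\le n$ and $a-r\notin\{n-1,n-2,n-3,n-4,n-5,n-7\}$, since the forbidden values $t_{2r+1}-j$ with $j\in\{1,2,3,4,5,7\}$ of $S^3[t_{2r+1}]$ are carried exactly onto the forbidden values $n-j$ of $S^3[n]$ under the shift $a\mapsto a-r$; hence $a-r\in S^3[n]=I^3_F(r)$. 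The only point requiring care is this bookkeeping that the excluded residues of $I^3(2r+1)$ align, after subtracting $r$, with those of $I^3_F(r)$. Conceptually the same conclusion comes from the structural picture: the triples on which the three systems differ all avoid $x$ and form a $3$-way trade on the remaining $2r$ points, whose volume $n-k$ therefore cannot belong to $\{1,2,3,4,5,7\}$, which is exactly the statement $k\in I^3_F(r)$. Since the argument uses only $J^3(2r+1)\subseteq I^3(2r+1)$ (valid for all admissible $r$) and an arithmetic identity, it presents no real obstacle.
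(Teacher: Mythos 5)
Your proof is correct. The paper in fact offers no argument for this lemma at all — it is simply declared ``straightforward'' — and your verification (dropping the third system to get $J^3_F(r)\subseteq J_F(r)$, reading off the size $k+r$ of the common intersection to get the second containment, and the shift-by-$r$ bookkeeping showing that the excluded values of $I^3(2r+1)$ map exactly onto those of $I^3_F(r)$ since $t_{2r+1}-r=\frac{2r(r-1)}{3}$) is precisely the routine unpacking of the definitions that the authors intend.
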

\section{Recursive constructions}
In this section we give several recursive constructions for finding the $3$-way flower intersection numbers of Steiner triple systems. The concept of PBDs, GDDs and Latin squares play an important role in these constructions. Let us give their formal definition.

Let $v$ be a positive integer and $K$ be a set of positive integers. A \emph{pairwise balanced design} (briefly a PBD) $B(K, \lambda, v)$ is a pair $(X ,\mathcal{A})$ where $X$ is a $v$-set and $\mathcal{A}$ is a set of subsets (called \emph{blocks}) of $X$ such that $|B|\in K$ for each $B\in \mathcal{A}$ and each pair of distinct elements of $X$ is contained in exactly $\lambda$ blocks of $\mathcal{A}$.

Let $X$ be a finite set containing $v$ points, $\mathcal{G}$ a family of distinct subsets of $X$, called \emph{groups} which partition $X$, and $\mathcal{A}$ a collection of subsets of $X$, called \emph{blocks}. Let $K$ be a set of positive integers. A design $(X, \mathcal{G}, \mathcal{A})$ is called a \emph{group divisible design} (GDD) $K$-GDD if
\begin{enumerate}
\item $\{|B|~:~B\in \mathcal{A}\}\subseteq K$;
\item $|G\cap B|\leq 1$ for every $G\in \mathcal{G}$ and every $B\in \mathcal{A}$;
\item Every pair of points from distinct groups occurs in exactly a unique block of $\mathcal{A}$.
\end{enumerate}

If $\mathcal{G}$ contains $t_{1}$ groups of size $m_{1}$, $t_{2}$ groups of size $m_{2}$, \ldots, and $t_{s}$ groups of size $m_{s}$, we call $m^{t_{1}}_{1}m^{t_{2}}_{2}\ldots m^{t_{s}}_{s}$ the \emph{group type} (or type) of the GDD. A $K$-GDD with group type $m^{t_{1}}_{1}m^{t_{2}}_{2}\ldots m^{t_{s}}_{s}$ is actually a pairwise balanced design and it is denoted by $B(K\cup M, 1, v)$ (or $(v, K\cup M, 1)-$PBD), where $M=\{m_1,m_2,\ldots ,m_s\}$. We usually write $\{k\}$-GDD as $k$-GDD and $B(\{k\}, 1, v)$ as $B(k, 1, v)$ (or $(v, k, 1)$-BIBD).

A \emph{Latin square} of order $n$ is an $n\times n$ array $L = ({\ell_{ij}})$ on $n$ symbols in which every row and every column of $L$ contains no repeated symbols. Two Latin squares $L$ and $L'$ of the same order are \emph{orthogonal} if ${\ell}_{ab} = {\ell}_{cd}$ and ${\ell'}_{ab} = {\ell'}_{cd}$, implies $a = c$ and $b = d$.
A set of Latin squares $L_{1}, L_{2}, \ldots ,L_{m}$ are \emph{mutually orthogonal}, or a set of \emph{MOLS}, if for every $1 \leq i < j \leq m$, $L_{i}$ and $L_{j}$ are orthogonal.

We use the results of $3$-way intersection problem for Latin squares to determine $J^{3}_{F}(r)$. The 3-way intersection problem for Latin squares is the problem of determining, for all orders $n$, the set of integers $k$ for which there exist three Latin squares of order $n$ having precisely $k$ identical cells, with their remaining $n^2-k$ cells different in all three Latin squares.

Denote by $J'^3(n)$ the set of integers $k$ for which there exist three Latin squares of order $n$ which have precisely $k$ cells where all three squares have identical entries and $n^2-k$ cells where all three squares contain different entries. Let for $n\geq 4$, $I'^3(n)=[0,n^2-15]\cup \{n^2-12,n^2-9,n^2\}$ and $I'^3(3)=\{0,9\}$. \cite{3way-Latin} completely solve the $3$-way intersection problem. The following theorem contains their results. Let $[i,j]$ denote the set of integers $\{i,i + 1,i + 2,\ldots ,j-1,j\}$, where $i<j$.
\begin{lem}\emph{\cite{3way-Latin}}{\label{3way-Latin}}
$J'^3(3)=I'^3(3)=\{0,9\}$,
\\$J'^3(4)=I'^3(4)\setminus \{7\} =\{0,1,4,16\}$,
\\$J'^3(5)=I'^3(5)\setminus \{8,9,13,16\} =[0,7]\cup \{10,25\}$,
\\$J'^3(6)=I'^3(6)\setminus \{20,21,24\} =[0,19]\cup \{27,36\}$,
\\$J'^3(7)=I'^3(7)\setminus [31,34] =[0,30]\cup \{37,40,49\}$,
\\$J'^3(n)=I'^3(n)$ for $n\geq8$.
\end{lem}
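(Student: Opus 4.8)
The plan is to reformulate the problem in terms of \emph{3-way Latin trades}. Given three Latin squares $L_1,L_2,L_3$ of order $n$ with exactly $k$ commonly agreeing cells and $n^2-k$ cells carrying three pairwise distinct entries, let $S$ be the set of the latter cells and let $T_i$ be the restriction of $L_i$ to $S$. Then $(T_1,T_2,T_3)$ is a triple of partial Latin squares on the common cell set $S$ that agree, row by row and column by column, in the multiset of symbols used, yet disagree in every cell of $S$; call $d=n^2-k$ its \emph{volume}. Conversely, any such trade embedded in a background Latin square produces three squares realizing $k=n^2-d$. Thus determining $J'^3(n)$ amounts to determining which volumes $d$ occur, together with the embeddability of the corresponding trades into an order-$n$ square.

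First I would establish the necessary conditions $J'^3(n)\subseteq I'^3(n)$ by bounding small trade volumes. A short argument shows that every nonempty row (resp.\ column) of a 3-way trade must meet $S$ in at least $3$ cells: a row meeting $S$ in one cell cannot balance its symbol multiset, since the three squares would be forced to agree there; and a row meeting $S$ in two cells uses only two symbols, contradicting the requirement of three distinct entries in a cell. Hence a trade with $r$ active rows and $c$ active columns satisfies $3\max(r,c)\le d\le rc$, which already forces $r,c\ge 3$ and $d\ge 9$, with $d=9$ occurring only for a full $3\times 3$ cyclic trade. The same counting rules out $d\in\{1,\dots,8,10,11\}$ and leaves $d=12$ as the next candidate. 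The delicate point is excluding $d=13$ and $d=14$: here the crude line-count permits a $4\times 4$ shape, so one must argue at the level of trade structure that no 3-way Latin trade of volume $13$ or $14$ exists. I expect this to be the main obstacle, and I would handle it by a case analysis on the distribution of active cells among the four rows and columns, ruling out each admissible shape.

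For the sufficiency when $n\ge 8$ I would argue in two parts. The three isolated large values $k=n^2,\,n^2-9,\,n^2-12$ (volumes $0,9,12$) are realized by taking $L_1=L_2=L_3$ and, in the last two cases, overwriting a single embedded $3\times3$ cyclic trade or a single volume-$12$ trade, both of which fit comfortably once $n\ge 4$. For the whole initial segment $[0,n^2-15]$, equivalently all volumes $d\in[15,n^2]$, I would pack several pairwise disjoint trades of controlled volumes (copies of the volume-$9$, volume-$12$, and larger trades, together with the full-square trade at the extreme $d=n^2$) into a common background square so that their total volume sweeps every integer of $[15,n^2]$ without gaps. A product/recursive construction built on a modest list of explicit base squares should make every such $d$ attainable for all $n\ge 8$, with a star-type product of trades against background squares as the natural vehicle.

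Finally, the small orders $n=3,4,5,6,7$ must be treated separately, since there is too little room to pack trades freely and the spectra are correspondingly sparse and irregular (for instance $J'^3(4)=\{0,1,4,16\}$). I would settle these by direct constructions for the values that do occur and by an exhaustive, computer-assisted search certifying that the remaining values are genuinely absent, thereby matching the listed exceptional sets. The overall difficulty is concentrated in two places: the structural non-existence proof for volumes $13$ and $14$, and the verification that the packing construction leaves no gap in $[15,n^2]$ for every $n\ge 8$.
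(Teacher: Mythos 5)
This statement is not actually proved in the paper you are comparing against: it is quoted as an external result (Lemma~B) from Adams, Billington, Bryant and Mahmoodian \cite{3way-Latin}, and the present authors only \emph{use} it (together with an adaptation of its Lemma~2.2, namely Theorem~\ref{Latin J''^3(2n+1)}). So your attempt can only be judged against the cited proof. Your necessity half is on the right track: the reformulation via $3$-way trades and the line-counting argument (every active row or column of such a trade carries at least $3$ cells, hence $3\max(r,c)\le d\le rc$) do correctly exclude volumes $1,\dots,8,10,11$ and identify $d=9$ with the $3\times3$ cyclic trade, which is essentially how the necessary conditions are obtained in \cite{3way-Latin}.

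The two places where you yourself locate the difficulty are, however, genuine gaps rather than deferred routine work. First, the non-existence of $3$-way trades of volume $13$ and $14$ is only promised (``a case analysis on the distribution of active cells''), not carried out; without it the inclusion $J'^3(n)\subseteq I'^3(n)$ is unproven. Second, the sufficiency argument for $n\ge 8$ does not work as sketched: disjoint packings of volume-$9$ and volume-$12$ trades only realize sums of $9$'s and $12$'s, so values such as $d=15,16,17,19,20,22,\dots$ require single trades of essentially every volume in $[15,n^2]$ (including volumes close to $n^2$, e.g.\ $d=n^2-1$ for the value $k=1$) together with embedding arguments; this inventory-plus-embedding step is precisely the content of the cited proof, which proceeds instead through explicit small-order spectra and recursive prolongation-type constructions such as the Lemma~2.2 adapted here as Theorem~\ref{Latin J''^3(2n+1)}. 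Moreover, your parenthetical claim that the volume-$9$ and volume-$12$ trades ``fit comfortably once $n\ge 4$'' is false and conflicts with the very statement being proved: a volume-$9$ trade forces a $3\times3$ Latin subsquare in the background square, and a Latin square of order $4$ or $5$ has no proper subsquare of order $3$ (a proper subsquare has order at most $n/2$) --- this is exactly why $7\notin J'^3(4)$ and $16\notin J'^3(5)$; likewise volume $12$ is unattainable at orders $5$ and $6$ (whence $24\notin J'^3(6)$). These small-order obstructions show that the embedding step needs genuine care for every $n$, and your plan does not yet supply it.
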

Before expressing the next theorem we need to introduce some notations. For sets of integers $X$ and $Y$, let $X + Y =\{x + y|~x\in X,~y\in Y\}$. Also for any integer $n$, let $nX =\{nx|~x\in X\}$.

Let $J''^3(n)$ denotes the set of integers $k$ for which there exist three Latin squares of order $n$ with the same constant secondary diagonal which have precisely $k=k'+n$ cells ($n$ of these are cells of secondary diagonal) where all three squares have identical entries and $n^2-n-k'$ cells where all three squares contain different entries. The following theorem is similar to Lemma $2.2$ of \cite{3way-Latin}.
\begin{theorem}{\label{Latin J''^3(2n+1)}}
$J'^3(n)+(n+1)\{[0,n-3]\cup \{n\}\}+(2n+1)\{[1,n-2]\cup \{n+1\}\}\subseteq J''^3(2n+1).$
\end{theorem}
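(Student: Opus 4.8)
The plan is to realise every element of the sumset as the total number of agreeing cells of three order-$(2n+1)$ Latin squares sharing a constant secondary diagonal, by partitioning the $(2n+1)^2$ cells into three disjoint regions whose agreement counts simply add. I would split the cells into a region $A$ of size $n^2$, a region $B$ of size $n(n+1)$, and a region $C$ of size $(n+1)(2n+1)$. These sizes sum to $(2n+1)^2$, and the maxima of the three summands (namely $n^2$, $n(n+1)$ and $(n+1)(2n+1)$) sum to the same value, so the bound is tight and agrees with the all-identical case. Region $A$ will carry the $J'^3(n)$ contribution, region $B$ the $(n+1)$-indexed contribution, and region $C$ the $(2n+1)$-indexed contribution, and the three modifications will be made independently.

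The cleanest engine is the constant-symbol transversal. Starting from the addition table of $\mathbf{Z}_{2n+1}$ (or a suitable base square), the lines $S_c=\{(i,j):i+j=c\}$ are transversals on which the entry is the constant $c$, and they partition the square; I take $S_0$ to be the constant secondary diagonal. For region $C$ I would use $n+1$ of these transversals, including $S_0$, carrying a symbol set $\Sigma$ with $0\in\Sigma$ and $|\Sigma|=n+1$. Applying to the three squares three permutations $\rho_1,\rho_2,\rho_3$ of $\Sigma$ (relabelling which symbol sits on which of these transversals) preserves the Latin property, since in every row and every column the multiset of entries lying on these transversals is exactly $\Sigma$. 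Forcing $\rho_k(0)=0$ keeps $S_0$ constant, so the diagonal is always an agreeing line. A transversal is an agreeing line when $\rho_1,\rho_2,\rho_3$ coincide on it and a totally discordant line (all three entries distinct in every cell) when they are pairwise distinct on it; since each line has length $2n+1$, region $C$ contributes $(2n+1)c$, where $c$ is the number of agreeing lines.

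The admissible values of $c$ come from an elementary fact that I would isolate as a lemma and reuse for region $B$: three permutations of an $m$-set that at every coordinate are either all equal or all distinct agree in exactly $t$ coordinates precisely when $t\in[0,m-3]\cup\{m\}$, the gap at $m-1,m-2$ arising because three pairwise-discordant permutations need at least three points. For region $C$ the coordinate $0$ is pre-fixed, so taking $m=n+1$ with one forced agreement yields $c\in[1,n-2]\cup\{n+1\}$, exactly the indexing set of the third summand. Region $A$ is handled by a genuine order-$n$ subsquare placed so as to avoid $S_0$: since any Latin square on its symbols and cells may replace it without disturbing the rest, three copies realising a value $a\in J'^3(n)$ (Lemma~\ref{3way-Latin}) contribute exactly $a$. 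Region $B$ is the intermediate layer, organised into $n$ lines of length $n+1$ whose agreement is again governed by the permutation-triple lemma with $m=n$ and no forced coordinate, giving a contribution $(n+1)b$ with $b\in[0,n-3]\cup\{n\}$.

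The main obstacle is not any single region but their simultaneous compatibility: I must exhibit one base square of order $2n+1$ that at once contains the freely replaceable order-$n$ subsquare for region $A$, the $n$ length-$(n+1)$ switchable lines for region $B$, and the $n+1$ constant-symbol transversals (including the diagonal) for region $C$, arranged so that the three independent relabellings keep all three arrays Latin, keep $S_0$ constant, and, crucially, leave every non-agreeing cell occupied by three \emph{distinct} symbols across the three squares rather than merely pairwise distinct ones. Verifying the column constraints shared between the region-$B$ lines and the region-$C$ transversals, and checking that no accidental agreement is created off the designated sets, is the technical heart, and this is exactly where I would follow the template of Lemma~2.2 of \cite{3way-Latin}. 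Once the base square and the three relabellings are in place, the total agreement equals $a+(n+1)b+(2n+1)c$ with the three parameters ranging independently, so every element of $J'^3(n)+(n+1)\{[0,n-3]\cup\{n\}\}+(2n+1)\{[1,n-2]\cup\{n+1\}\}$ lies in $J''^3(2n+1)$, as required.
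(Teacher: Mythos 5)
Your proposal is correct and is essentially the paper's own proof: the paper simply cites Lemma~2.2 of \cite{3way-Latin} and notes that one must restrict to permutations fixing the secondary diagonal, and your three-region decomposition (an order-$n$ subsquare contributing $J'^3(n)$; $n$ switchable lines of length $n+1$ governed by permutation triples of an $n$-set; and $n+1$ constant-symbol transversals of length $2n+1$ governed by permutation triples of an $(n+1)$-set, with the diagonal's symbol as a forced common fixed point, whence $[1,n-2]\cup\{n+1\}$ instead of $[0,n-2]\cup\{n+1\}$) is exactly the architecture of that lemma. The only point to watch is that the addition table of $\mathbf{Z}_{2n+1}$ itself cannot serve as the base square (for $n\geq 2$ it contains no order-$n$ subsquare avoiding $n+1$ of its broken diagonals, by a Cauchy--Davenport count), but since you present it only as motivation and defer the actual base square to the template of Lemma~2.2 --- precisely what the paper does --- this is not a gap.
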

\begin{proof}
The proof is exactly the same as described in the Lemma $2.2$ of \cite{3way-Latin}, just it is enough to use permutations which do not change the secondary diagonal of the Latin square.
\end{proof}

We use four constructions of $STS$s from smaller $STS$s which are adapted from constructions used in \cite{Packing-Billington}, \cite{2FSTS} and \cite{2STS}. 
\begin{construction}{\label{STS(3v)-from-STS(v)}}
\emph{($STS(3v)$ from $STS(v)$)}. We define Steiner triple system $S$ on the set $X=\{1,2, \ldots , v\}\times \{1,2,3\}$. Let $L=(\ell_{ij})$ be a Latin square on symbols $V=\{1,2, \ldots , v\}$. Let $V_i=V\times \{i\}$ for $i=1,2,3$ and $(V_1,\mathcal{B}_1)$, $(V_2,\mathcal{B}_2)$ and $(V_3,\mathcal{B}_3)$ be three Steiner triple systems $S_1$, $S_2$ and $S_3$ respectively. It is easy to see that $(X,\mathcal{B}_1\cup \mathcal{B}_2\cup \mathcal{B}_3\cup \mathcal{B})$ is an $STS(3v)$, where $\mathcal{B}=\{\{(i,1),(j,2),(s,3)\}|~\ell_{ij}=s,~1\leq i,j,s\leq v\}$.
\end{construction}

\begin{construction}{\label{STS(3v+1)-from-STS(v+1)}}
\emph{($STS(3v+1)$ from $STS(v+1)$)}. We define Steiner triple system $S$ on the set $X=(\{1,2, \ldots , v\}\times \{1,2,3\})\cup \{\infty\}$. Let $L$ be a Latin square on symbols $V=\{1,2, \ldots , v\}$. Let $V_i=V\times \{i\}$ for $i=1,2,3$ and $(V_1\cup \{\infty\},\mathcal{B}_1)$, $(V_2\cup \{\infty\},\mathcal{B}_2)$ and $(V_3\cup \{\infty\},\mathcal{B}_3)$ be three $STS(v+1)$ $S_1$, $S_2$ and $S_3$ respectively. It is easy to see that $(X,\mathcal{B}_1\cup \mathcal{B}_2\cup \mathcal{B}_3\cup \mathcal{B})$ is an $STS(3v+1)$, where $\mathcal{B}=\{\{(i,1),(j,2),(s,3)\}|~\ell_{ij}=s,~1\leq i,j,s\leq v\}$.
\end{construction}

\begin{construction}{\label{STS(3v+3)-from-STS(v+3)}}
\emph{($STS(3v+3)$ from $STS(v+3)$)}. We define Steiner triple system $S$ on the set $X=(\{1,2, \ldots , v\}\times \{1,2,3\})\cup \{\infty_1,\infty_2,\infty_3\}$. Let $L=(\ell_{ij})$ be a Latin square on symbols $V=\{1,2, \ldots , v\}$. Let $V_i=V\times \{i\}$ for $i=1,2,3$ and $(V_1\cup \{\infty_1,\infty_2,\infty_3\},\mathcal{B}_1)$, $(V_2\cup \{\infty_1,\infty_2,\infty_3\},\mathcal{B}_2)$ and $(V_3\cup \{\infty_1,\infty_2,\infty_3\},\mathcal{B}_3)$ be three $STS(v+3)$ $S_1$, $S_2$ and $S_3$ respectively, which each of them contains triple $\{\infty_1,\infty_2,\infty_3\}$. It is easy to see that $(X,\mathcal{B}_1\cup \mathcal{B}_2\cup \mathcal{B}_3\cup \mathcal{B})$ is an $STS(3v+3)$, where $\mathcal{B}=\{\{(i,1),(j,2),(s,3)\}|~\ell_{ij}=s,~1\leq i,j,s\leq v\}$.
\end{construction}

Recall that a $1$-factor of a graph $G$ is a spanning subgraph of G that is regular of degree $1$. A $1$-factorization of a graph $G$ is a set $\mathcal{F}=\{F_1,\ldots ,F_k\}$ of edge-disjoint $1$-factors of $G$ whose edge-sets partition the edge-set of $G$. \cite{Handbook}

\begin{construction}{\label{STS(2v+1)-from-STS(v)}}
\emph{($STS(2v+1)$ from $STS(v)$)}. Let $X=\{a_1,a_2,\ldots ,a_v\}$ and $(X,\mathcal{B})$ be an $STS(v)$. Put $v + 1 = 2n$ and let $\mathcal{F}=\{F_i|~i=1, 2, . . . , 2n-1\}$ be a $1$-factorization of $K_{2n}$ with the vertex set $V(K_{2n}) = Y$ where $X\cap Y = \emptyset$, ($K_{2n}$ is complete graph with $2n$ vertices). Put $X^* = X\cup Y$ and $\mathcal{B}^* = \mathcal{B}\cup \mathcal{C}$ where $\mathcal{C}= \{\{a_i,x,y\}|~[x, y]\in F_i,~i = 1, 2, . . . , 2n-1\}$. It is easy to see that $(X^*,\mathcal{B}^*)$ is an $STS(2v + 1)$.
\end{construction}
\begin{theorem}{\label{STS(3v) Construction}}
For all admissible $r$, if there exist three Latin squares on same $2r+1$ symbols with the same constant secondary diagonal agreeing pairwise on exactly $b+2r+1$ cells ($2r+1$ of these are cells of the secondary diagonal), and if $a_i\in J^3(2r+1)$ for $i=1,2$ and $a\in J^3_F(r)$, then $a_1+a_2+a+b\in J^3_F(3r+1)$.
\end{theorem}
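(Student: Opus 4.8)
The plan is to realize the three desired $STS(6r+3)=STS(2(3r+1)+1)$ by a single application of Construction \ref{STS(3v)-from-STS(v)} with $v=2r+1$, feeding it three carefully chosen ingredients on each coordinate so that the three outputs share a common flower of size $3r+1$ and agree in exactly $a_1+a_2+a+b$ further triples. Since $v=2r+1$ gives $3v=6r+3$, and the flower at any point of an $STS(6r+3)$ has $(6r+2)/2=3r+1$ triples, the arithmetic of the flower matches. First I would fix the point whose flower is to be common to be $(c,3)$, where $c$ is the constant value taken by the secondary diagonal of the three Latin squares; the whole argument is then an exercise in making the four families of triples of Construction \ref{STS(3v)-from-STS(v)} — the three within-part systems $\mathcal B_1,\mathcal B_2,\mathcal B_3$ and the transversal family $\mathcal B$ — agree in a controlled way.

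For the within-part systems: on $V_3$ I would take three copies of $STS(v)$ witnessing $a\in J^3_F(r)$, relabelled so that their common flower sits at the point $c$; these agree in the same $a+r$ triples, $r$ of which are the flower at $c$. On $V_1$ and $V_2$ I would take three copies of $STS(v)$ witnessing $a_1,a_2\in J^3(2r+1)$, agreeing in the same $a_1$ and $a_2$ triples respectively. For the transversal family I would use the three Latin squares of the hypothesis: since the cell $(i,j)$ with entry $s$ produces the triple $\{(i,1),(j,2),(s,3)\}$, two squares produce the same transversal triple exactly when they agree in that cell, so the three transversal families agree in precisely the $b+2r+1$ common cells.

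The key bookkeeping step is to identify the common flower. A transversal triple contains $(c,3)$ iff its Latin-square cell carries the entry $c$; because the secondary diagonal is constant equal to $c$ and a symbol occurs once per row, the cells carrying $c$ are exactly the $2r+1$ secondary-diagonal cells, which by hypothesis are common to all three squares. Together with the $r$ flower triples of the $V_3$-systems at $c$, these give the $3r+1$ triples through $(c,3)$, and all of them are common to the three outputs; hence the three $STS(6r+3)$ share the flower at $(c,3)$. Summing the four contributions, the three outputs pairwise intersect in the same $a_1+a_2+(a+r)+(b+2r+1)$ triples, of which exactly $3r+1$ form the common flower, leaving $a_1+a_2+a+b$ further common triples. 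Therefore $a_1+a_2+a+b\in J^3_F(3r+1)$.

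The main obstacle I anticipate is the coordination of labels: one must arrange simultaneously that the shared flower of the three $V_3$-systems lies at the same symbol $c$ that labels the constant secondary diagonal of the three Latin squares. One must also check that the four common sets are genuinely disjoint so that the counts add without overlap; this is automatic, since a within-part triple lies inside a single $V_i$ while a transversal triple meets all three parts. Finally, I would verify that ``pairwise agreeing on the same set'' propagates from each ingredient to the assembled systems, which holds because every output triple lies in exactly one of the four families and is common to all three outputs if and only if its ingredient triple (or cell) is.
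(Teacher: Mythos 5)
Your proof is correct and takes essentially the same route as the paper: both apply Construction \ref{STS(3v)-from-STS(v)} with $v=2r+1$, placing the three systems witnessing $a\in J^3_F(r)$ on $V_3$ with flower at the diagonal symbol, the systems witnessing $a_1,a_2\in J^3(2r+1)$ on $V_1$ and $V_2$, and using the fact that the cells carrying the constant diagonal symbol are exactly the $2r+1$ secondary-diagonal cells to get the $3r+1$ common flower triples at that point. The only cosmetic difference is that the paper normalizes the diagonal constant to $1$ (flower at $(1,3)$) whereas you keep a general symbol $c$ and relabel the $V_3$-systems instead.
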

\begin{proof}
We use Construction {\ref{STS(3v)-from-STS(v)}} to construct a collection of three $STS(6r+3)$s with flower intersection number $a_1+a_2+a+b$. For this purpose, start with three Latin squares $L'$, $L''$ and $L'''$ of order $2r+1$, with the same constant secondary diagonal of $1$'s and agreeing pairwise on exactly $b$ cells off the secondary diagonal.  For $i = 1,2$, let $S'_i$, $S''_i$ and $S'''_i$ be a collection of three systems on $\{1,2, \ldots , 2r+1\}\times \{i\}$ with exactly $a_i$ triples in common. Let $S'_3$, $S''_3$ and $S'''_3$ be a collection of three systems on $\{1,2, \ldots , 2r+1\}\times \{3\}$ with the same $r$ triples containing $(1,3)$ (a flower at element $(1,3)$), and $a$ further triples in common. Then by using Construction {\ref{STS(3v)-from-STS(v)}} for $v=2r+1$, we construct a collection of three Steiner triple systems $S'$, $S''$ and $S'''$ of order $6r+3$ with $a_1+a_2+a+b+(3r+1)$ triples in common, $3r+1$ of them are flower at element $(1,3)$.  
\end{proof}
\begin{remark}{\label{Remark-STS(3v) Construction}}
Theorem \ref{STS(3v) Construction} is true also when Latin squares have same row or same column instead of same constant secondary diagonal. the proof is similar to the proof of Theorem \ref{STS(3v) Construction}.
\end{remark}
\begin{theorem}{\label{STS(3v+1) Construction}}
For all admissible $r$, if $b\in J'^3(2r)$ and $a_i\in J^3_F(r)$ for $i=1,2,3$, then $a_1+a_2+a_3+b\in J^3_F(3r)$.
\end{theorem}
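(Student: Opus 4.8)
The plan is to mirror the proof of Theorem~\ref{STS(3v) Construction}, but using Construction~\ref{STS(3v+1)-from-STS(v+1)} in place of Construction~\ref{STS(3v)-from-STS(v)}. Setting $v=2r$, that construction builds an $STS(3v+1)=STS(6r+1)=STS(2(3r)+1)$ on $X=(\{1,\ldots ,2r\}\times\{1,2,3\})\cup\{\infty\}$ out of three $STS(v+1)=STS(2r+1)$ layers living on $V_i\cup\{\infty\}$, $i=1,2,3$, glued together by a Latin square $L$ of order $2r$ via the cross-blocks $\mathcal{B}=\{\{(i,1),(j,2),(s,3)\}\mid \ell_{ij}=s\}$. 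The target flower point is $\infty$, which is shared by all three layers and, crucially, appears in no cross-block of $\mathcal{B}$; hence the flower at $\infty$ in the composite is exactly the disjoint union of the three layer-flowers at $\infty$, giving the $3r$ triples required for $J^3_F(3r)$.

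First I would produce the ingredients from the three hypotheses. For each $i=1,2,3$, since $a_i\in J^3_F(r)$ I can pick three copies of $STS(2r+1)$ on $V_i\cup\{\infty\}$, say $S'_i,S''_i,S'''_i$, that pairwise agree on the same $a_i+r$ triples, $r$ of which form their common flower at $\infty$ (relabelling the flower point to $\infty$ if necessary). Since $b\in J'^3(2r)$ I can pick three Latin squares $L',L'',L'''$ of order $2r$ agreeing in exactly $b$ cells and differing in all three entries on the remaining cells. Applying Construction~\ref{STS(3v+1)-from-STS(v+1)} three times then yields three copies of $STS(6r+1)$,
$$S'=\textstyle\bigcup_i S'_i\cup\mathcal{B}(L'),\qquad S''=\bigcup_i S''_i\cup\mathcal{B}(L''),\qquad S'''=\bigcup_i S'''_i\cup\mathcal{B}(L'''),$$
where $\mathcal{B}(\cdot)$ denotes the cross-block set attached to the indicated Latin square.

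The core of the argument is then a block-count organised by three mutually disjoint sources of triples. The flower triples at $\infty$ all lie inside the layers and are shared by construction, contributing $3r$ common triples. The triples of layer $i$ that avoid $\infty$ are common to all three composites in exactly $a_i$ cases, contributing $a_1+a_2+a_3$. The cross-blocks are common to all three composites precisely at the cells where $L',L'',L'''$ agree, contributing $b$. These three families are pairwise disjoint---flower blocks contain $\infty$, the remaining layer blocks lie within a single $V_i$ and avoid $\infty$, and cross-blocks meet each $V_i$ in one point---so no triple is double-counted and the three composites share exactly $3r+a_1+a_2+a_3+b$ triples, of which $3r$ form the flower at $\infty$. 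By definition this places $a_1+a_2+a_3+b$ in $J^3_F(3r)$.

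I do not anticipate a serious obstacle; the only point needing genuine care is checking that the three block-families really are disjoint and that no accidental agreement among the cross-blocks occurs beyond the $b$ forced by the Latin squares---this last part is exactly where the condition ``differing in all three entries'' off the agreeing cells is used. Unlike Theorem~\ref{STS(3v) Construction}, no constant-secondary-diagonal hypothesis on the Latin squares is required here, precisely because the flower point $\infty$ is disjoint from every cross-block, so the plain $3$-way Latin square intersection set $J'^3(2r)$ suffices rather than the diagonal variant $J''^3$.
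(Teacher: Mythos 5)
Your proof is correct and follows essentially the same route as the paper's: apply Construction~\ref{STS(3v+1)-from-STS(v+1)} with $v=2r$, using three layer-collections realizing the $a_i$ and three Latin squares realizing $b$, and count the common triples as $3r+a_1+a_2+a_3+b$ with the flower at $\infty$. Your added checks (disjointness of the three block families, and the role of the ``all three entries different'' condition in preventing extra pairwise agreements) are exactly the details the paper leaves implicit.
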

\begin{proof}
We use Construction {\ref{STS(3v+1)-from-STS(v+1)}} to construct a collection of three $STS(6r+1)$s with flower intersection number $a_1+a_2+a_3+b$. For this purpose, start with three Latin squares $L'$, $L''$ and $L'''$ of order $2r$ with $3$-way intersection number $b$. For $i = 1,2,3$, let $S'_i$, $S''_i$ and $S'''_i$ be a collection of three systems on $(\{1,2, \ldots , 2r\}\times \{i\})\cup \{\infty\}$ with the same $r$ triples containing $\infty$ (a flower at $\infty$), and $a_i$ further triples in common. Then by using Construction {\ref{STS(3v+1)-from-STS(v+1)}} for $v=2r$, we construct a collection of three Steiner triple systems $S'$, $S''$ and $S'''$ of order $6r+1$ with $a_1+a_2+a_3+b+3r$ triples in common, $3r$ of them are flower at $\infty$.  
\end{proof}
\begin{theorem}{\label{STS(3v+3) Construction}}
For $r\equiv 0, 2$ (mod $3$), if $b\in J'^3(2r)$ and $a_i\in J^3_F(r+1)$ for $i=1,2,3$, then $a_1+a_2+a_3+b\in J^3_F(3r+1)$.
\end{theorem}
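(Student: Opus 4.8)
The plan is to apply Construction \ref{STS(3v+3)-from-STS(v+3)} with $v=2r$, which produces an $STS(3v+3)=STS(6r+3)=STS(2(3r+1)+1)$, so that the flower parameter of the resulting systems is exactly $3r+1$. First I would record that the hypothesis $r\equiv 0,2\pmod 3$ is precisely what makes $STS(2r+3)=STS(2(r+1)+1)$ exist (equivalently, it makes $r+1$ admissible), so every ingredient called for below is available; note also that $6r+3\equiv 3\pmod 6$ and $3r+1\equiv 1\pmod 3$ automatically, so the target system and its flower parameter are legitimate.

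Concretely, I would start with three Latin squares $L',L'',L'''$ of order $2r$ having $3$-way intersection number $b$, which exist because $b\in J'^3(2r)$. For each $i\in\{1,2,3\}$, since $a_i\in J^3_F(r+1)$, there are three $STS(2r+3)$ on $V_i\cup\{\infty_1,\infty_2,\infty_3\}$ sharing a common flower at $\infty_1$ (of $r+1$ triples) together with $a_i$ further common triples. After an innocuous relabeling of the $2r+2$ points other than $\infty_1$, I may assume the flower triple that pairs $\infty_2$ with $\infty_3$ is exactly $\{\infty_1,\infty_2,\infty_3\}$; this triple then lies in the common flower and hence belongs to all three systems, which is precisely the standing requirement of Construction \ref{STS(3v+3)-from-STS(v+3)}.

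Feeding these data into Construction \ref{STS(3v+3)-from-STS(v+3)} three times (reusing the same three Latin squares and the same triple on the infinite points in each instance) yields three $STS(6r+3)$, call them $S',S'',S'''$. I would then count their common triples in two parts. The $b$ cells on which all three Latin squares agree contribute $b$ common blocks of $\mathcal{B}$. For each $i$ the three component designs share $(r+1)+a_i$ triples; summing over $i$ gives $3(r+1)+a_1+a_2+a_3$, but the triple $\{\infty_1,\infty_2,\infty_3\}$ has been counted once for each $i$ while it occurs only once in the union $\mathcal{B}_1\cup\mathcal{B}_2\cup\mathcal{B}_3$, so two of its three appearances must be discarded. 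The total common count is therefore $b+3(r+1)+a_1+a_2+a_3-2=(a_1+a_2+a_3+b)+(3r+1)$, exactly as required.

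Finally I would check that the flower at $\infty_1$ in the combined systems has $3r+1$ triples and is common to all three. Since no block of $\mathcal{B}$ meets any infinite point, every triple through $\infty_1$ comes from one of the components, so the flower at $\infty_1$ is the union of the three component flowers, each of size $r+1$, overlapping only in $\{\infty_1,\infty_2,\infty_3\}$, giving $3(r+1)-2=3r+1$ triples; this union is common because each component flower is. The step I expect to require the most care is precisely this bookkeeping of the overlap at $\{\infty_1,\infty_2,\infty_3\}$: one must apply the $-2$ correction consistently to both the flower count and the total intersection count, and must confirm that the relabeling placing $\{\infty_1,\infty_2,\infty_3\}$ inside each common flower does not disturb the prescribed numbers $a_i$.
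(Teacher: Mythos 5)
Your proof is correct and follows essentially the same route as the paper: both apply Construction \ref{STS(3v+3)-from-STS(v+3)} with $v=2r$, feeding in three Latin squares of order $2r$ with $3$-way intersection $b$ and, for each $i$, three $STS(2r+3)$s sharing a flower at $\infty_1$ plus $a_i$ further triples, all containing $\{\infty_1,\infty_2,\infty_3\}$. Your extra bookkeeping (the relabeling that places $\{\infty_1,\infty_2,\infty_3\}$ inside each common flower, and the $-2$ correction giving $3(r+1)-2=3r+1$ for both the flower size and the total count) simply makes explicit what the paper's terser proof asserts.
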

\begin{proof}
We use Construction {\ref{STS(3v+3)-from-STS(v+3)}} to construct a collection of three $STS(6r+3)$s with flower intersection number $a_1+a_2+a_3+b$. For this purpose, start with three Latin squares $L'$, $L''$ and $L'''$ of order $2r$ with $3$-way intersection number $b$. For $i = 1,2,3$, let $S'_i$, $S''_i$ and $S'''_i$ be a collection of three systems on $(\{1,2, \ldots , 2r\}\times \{i\})\cup \{\infty_1,\infty_2,\infty_3\}$ which contain triple $\{\infty_1,\infty_2,\infty_3\}$ and with the same $r+1$ triples containing $\infty_1$ (a flower at $\infty_1$), and $a_i$ further triples in common. Then by using Construction {\ref{STS(3v+3)-from-STS(v+3)}} for $v=2r$, we construct a collection of three Steiner triple systems $S'$, $S''$ and $S'''$ of order $6r+3$ with $a_1+a_2+a_3+b+(3r+1)$ triples in common, $3r+1$ of them are flower at $\infty_1$.  
\end{proof}
\begin{theorem}{\label{STS(2v+1) Construction}}
For all admissible $r$, if $k\in J^3_F(r)$ then $k+(s-1)(r+1)\in J^3_F(2r+1)$ for every $s=1,2,\ldots ,2r-2,2r+1$.       
\end{theorem}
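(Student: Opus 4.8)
The plan is to invoke Construction~\ref{STS(2v+1)-from-STS(v)} with $v=2r+1$, which yields an $STS(2v+1)=STS(4r+3)$ of flower parameter $(4r+3-1)/2=2r+1$, so that any flower intersection number produced this way lands in $J^3_F(2r+1)$. Since $k\in J^3_F(r)$, I start from three systems $(X,\mathcal{B}^{(1)})$, $(X,\mathcal{B}^{(2)})$, $(X,\mathcal{B}^{(3)})$ on $X=\{a_1,\dots,a_{2r+1}\}$ that share a common flower at $a_1$ (the same $r$ triples through $a_1$) together with $k$ further common triples, all remaining triples being pairwise disjoint across the three systems. Taking $2n=v+1=2r+2$, I attach the factor $F_i$ of a $1$-factorization of $K_{2r+2}$ to the point $a_i$, exactly as in Construction~\ref{STS(2v+1)-from-STS(v)}, and feed the three inputs through it.

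First I record how the flower grows. In each output system the flower at $a_1$ is the union of the $r$ original triples through $a_1$ with the $r+1$ triples $\{a_1,x,y\}$ where $[x,y]\in F_1$; this is a set of $2r+1$ triples, that is, the whole flower at $a_1$ in the $STS(4r+3)$. Hence the three outputs share this entire flower as soon as (i) the inputs share the flower at $a_1$, which they do, and (ii) all three $1$-factorizations use one and the same distinguished factor $F_1$.

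Next I control the remaining overlap. Every triple of $\mathcal{C}$ contains exactly one point $a_i$ of $X$ and two points of $Y$, so two such triples from different positions $i\neq i'$ can never coincide, while a triple $\{a_i,x,y\}$ is shared by two outputs exactly when the edge $[x,y]$ lies in both factors occupying position $i$. Consequently, if the three $1$-factorizations agree on $F_1$ and on exactly $s-1$ further factors, and if at each of the remaining $2r+1-s$ positions the three factors are \emph{pairwise edge-disjoint}, then beyond the flower the outputs share precisely the $k$ triples inherited from the $\mathcal{B}^{(i)}$ and the $(s-1)(r+1)$ triples carried by the $s-1$ shared factors, every other triple being pairwise disjoint. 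The three systems then intersect pairwise in the \emph{same} block set, and the flower intersection number is $k+(s-1)(r+1)$, as required.

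The crux is therefore to realise, for each admissible $s$, three $1$-factorizations of $K_{2r+2}$ sharing a prescribed set of $s$ factors (one being $F_1$) whose remaining $2r+1-s$ factors are pairwise edge-disjoint. For $s=2r+1$ one simply uses the same $1$-factorization in all three systems. For $s\le 2r-2$ the unshared factors must $1$-factorize the $(2r+1-s)$-regular graph $G$ left after deleting the $s$ shared factors, and pairwise edge-disjointness amounts to three proper edge-colourings of $G$ that assign pairwise distinct colours to every edge, which needs at least three colours, i.e.\ $2r+1-s\ge 3$. I would write these down explicitly by fixing the $s$ distinguished factors and permuting the points of $Y$ by maps that preserve them while displacing the rest, in the same spirit as the Latin-square intersection constructions behind Lemma~\ref{3way-Latin} (indeed $1$-factorizations of $K_{2n}$ correspond to commutative idempotent Latin squares of order $2n-1$). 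The same degree count forces the two gaps, and this is the main obstacle: if a single factor were unshared it would be determined as the complement of the shared factors and hence could not differ (ruling out $s=2r$), while if exactly two factors were unshared their union would be a fixed $2$-regular graph admitting only two alternating splittings per cycle, so by pigeonhole two of the three factorizations would repeat a splitting and share edges (ruling out $s=2r-1$).
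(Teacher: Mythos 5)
Your reduction is sound as far as it goes: the flower bookkeeping (the $r$ old triples through $a_1$ plus the $r+1$ triples from $F_1$ form the full flower of size $2r+1$ in the $STS(4r+3)$), the observation that cross triples from distinct positions can never coincide, and the resulting count $k+(s-1)(r+1)$ are all correct, as are your explanations of why $s=2r$ and $s=2r-1$ cannot occur. But the step you yourself call the crux --- the existence, for every $1\le s\le 2r-2$, of three $1$-factorizations of $K_{2r+2}$ agreeing in $s$ prescribed factors position by position and pairwise edge-disjoint at every other position --- is never proved; you only announce that you ``would write these down explicitly'' by permuting the points of $Y$ by maps preserving the shared factors. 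That mechanism is itself unsubstantiated: a permutation of $Y$ fixing $s$ prescribed factors setwise while displacing every edge of each remaining factor (and a second one displacing both the identity and the first) need not exist for an arbitrary $1$-factorization, since generic $1$-factorizations of $K_{2n}$ admit no nontrivial symmetries; making it work would force you to pick a concrete structured factorization and do genuine constructive work, and the appeal to the ``spirit'' of Lemma \ref{3way-Latin} does not substitute for this, because those Latin squares carry no symmetry constraint tying them to $1$-factorizations. As written, the whole content of the theorem has been displaced into an unproved existence claim.

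The paper's proof makes this crux evaporate. It keeps a \emph{single} $1$-factorization $\mathcal{F}$ and instead permutes which point of $X$ each factor is attached to: with $\mathcal{C}=\{\{a_i,x,y\}:[x,y]\in F_i\}$, the three cross parts are $\mathcal{C}$, $\alpha_1(\mathcal{C})$ and $\alpha_2(\mathcal{C})$, where $\alpha_1,\alpha_2$ are permutations of $X$ fixing exactly $a_1,\ldots,a_s$ and satisfying $\alpha_1(a_i)\neq\alpha_2(a_i)$ for $i>s$ (for instance $\alpha_1=\rho$, $\alpha_2=\rho^2$ for a cycle $\rho$ on the $2r+1-s$ displaced points, which exists precisely when $2r+1-s\ge 3$ or $2r+1-s=0$, matching the admissible $s$). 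Then any two factors that end up attached to the same point are distinct members of the one factorization $\mathcal{F}$, so their edge-disjointness is automatic and no existence question remains. In your language this is the trivial solution of your edge-colouring problem: take one $1$-factorization of $G$ inducing the colouring $c$ and let the three colourings be $c$, $\tau\circ c$, $\tau^2\circ c$ for a cyclic permutation $\tau$ of the $2r+1-s\ge 3$ colours. Until you either adopt this or actually construct your three factorizations, the proof is incomplete.
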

\begin{proof}             
Let $v=2r+1$ and $X$, $Y$, $X^*$ and $\mathcal{F}$ be as in Construction \ref{STS(2v+1)-from-STS(v)}. Let $(X,\mathcal{B}_i)$ for $i=1,2,3$ be a collection of three $STS(v)$s, with $3$-way intersection number $k+r$ which $r$ of them being the triples of a common flower at point $a_1$. For $s=1,2,\ldots ,v-3,v$, let $\alpha_1$ and $\alpha_2$ be two permutations of $X$ fixing exactly $s$ elements $\{a_1,a_2,\ldots ,a_s\}$ and for $s+1\leq i\leq v$, $\alpha_1(a_i)\neq \alpha_2(a_i)$. Now let $\mathcal{C}$ be as in Construction \ref{STS(2v+1)-from-STS(v)}, i.e.

$\mathcal{C}= \{\{a_i,x,y\}|~[x, y]\in F_i,~i = 1, 2, . . . , v\}$, and put

$\alpha_1\mathcal{(C)}= \{\{\alpha_1(a_i),x,y\}|~[x, y]\in F_i,~i = 1, 2, . . . , v\}$, and

$\alpha_2(\mathcal{C})= \{\{\alpha_2(a_i),x,y\}|~[x, y]\in F_i,~i = 1, 2, . . . , v\}$.

Each $1$-factor of $\mathcal{F}$ contains $(v+1)/2=r+1$ edges, so $\mathcal{C}$, $\alpha_1\mathcal{(C)}$ and $\alpha_2\mathcal{(C)}$ pairwise have exactly $(s-1)(r+1)+(r+1)$ triples in common, which $(r+1)$ of them being the triples which contain $a_1$. So three $STS(2v+1)$s, $(X^*,\mathcal{B}_1\cup \mathcal{C})$, $(X^*,\mathcal{B}_2\cup \alpha_1(\mathcal{C}))$ and $(X^*,\mathcal{B}_3\cup \alpha_2(\mathcal{C}))$ have $3$-way intersection number $k+(s-1)(r+1)+2r+1$ which $2r+1$ of them being the triples of a common flower at point $a_1$. So $k+(s-1)(r+1)\in J^3_F(2r+1)$.               
\end{proof}
\begin{lem}\emph{\cite{Handbook}(Section III-3-4),\cite{4MOLS14}}{\label{4Mols}}
For $n\geq 5$ and $n\neq 6$, there exist four mutually orthogonal Latin squares of order $n$ except possibly for $n\in \{10,18,22\}$.
\end{lem}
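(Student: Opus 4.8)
The plan is to prove the stronger statement $N(n)\ge 4$ for all $n\ge 5$ with $n\neq 6$ (outside the three possible exceptions), where $N(n)$ denotes the maximum size of a set of MOLS of order $n$. I would assemble three standard tools and then dispose of a finite list of residual orders.

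First, for every prime power $q=p^{e}$ I would invoke the finite-field construction: indexing rows, columns and symbols by $\mathrm{GF}(q)$, the squares $L_{a}(i,j)=a\,i+j$ with $a\in \mathrm{GF}(q)\setminus\{0\}$ form a family of $q-1$ MOLS, so $N(q)\ge q-1\ge 4$ whenever $q\ge 5$. Next I would use MacNeish's product (Kronecker) bound $N(mn)\ge \min\{N(m),N(n)\}$, which comes from taking componentwise pairs of ingredient MOLS on the product set. Writing $n=\prod_{i} p_{i}^{e_{i}}$ this gives $N(n)\ge \min_{i}\bigl(p_{i}^{e_{i}}-1\bigr)$, hence $N(n)\ge 4$ as soon as every prime-power factor of $n$ is at least $5$.

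The orders not settled by the two steps above are exactly those possessing a prime-power factor smaller than $5$, i.e. those $n$ divisible by $2$ but not by $8$, or divisible by $3$ but not by $9$; here the small factors $2$, $3$ or $4$ drag the MacNeish minimum below $5$. For these I would turn to the recursive, Wilson-type machinery built on transversal designs: a $\mathrm{TD}(6,m)$ is equivalent to $4$ MOLS of order $m$, and such a design can be assembled by truncating a larger transversal design along the blocks of a suitable group divisible design and then filling its groups with ingredient $\mathrm{TD}$s. Combined with Wilson's asymptotic existence theorem for transversal designs, this yields $N(n)\ge 4$ for all sufficiently large $n$ in the troublesome residue classes, reducing the problem to a finite check.

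The main obstacle --- and the reason the statement carries the qualifier ``except possibly'' --- is precisely this finite list of small orders lying in the bad residue classes. Most are cleared by ad hoc arguments (difference matrices over small rings, direct products with a correcting ingredient, or the explicit constructions and computer searches recorded in the tables of \cite{Handbook}), but the three orders $n\in\{10,18,22\}$, each of the shape $2m$ with $m$ an odd prime power, resist every known product and recursive construction: no set of $3$ or more MOLS of these orders has been exhibited, and no nonexistence proof is known. I would therefore leave exactly these three cases open, which is the content of the cited result.
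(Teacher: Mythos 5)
This lemma is not actually proved in the paper: it is a background result quoted from the literature (hence the letter label and the two citations), so the only meaningful comparison is whether your sketch correctly reconstructs what those references establish. Your outline --- the finite-field construction giving $N(q)\ge q-1$ for prime powers, MacNeish's product bound, Wilson-type recursion via transversal designs $TD(6,m)$, and a terminal finite case analysis --- is indeed the standard route by which the Handbook's result is obtained, and as a strategy it is sound.

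There is, however, one concrete gap, and it sits exactly where the statement's second citation does: order $14$. The toolkit you assemble, including ``the explicit constructions and computer searches recorded in the tables of \cite{Handbook}'', cannot clear $n=14$: the Handbook (2006) records only $N(14)\ge 3$, and four MOLS of order $14$ were first constructed by \cite{4MOLS14} in 2012 via a dedicated search --- which is precisely why the paper cites Todorov alongside the Handbook rather than the Handbook alone. As written, your argument establishes the weaker statement with exception set $\{10,14,18,22\}$; removing $14$ requires invoking Todorov's construction explicitly, and your proposal never does. A smaller factual slip in your closing paragraph: for $n=18$ and $n=22$ sets of \emph{three} MOLS are known and recorded in the Handbook tables; what is open for $\{10,18,22\}$ is the existence of \emph{four}, so your claim that no set of $3$ or more MOLS of these orders has been exhibited is accurate only for $n=10$.
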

\begin{theorem}{\label{6t-PBD}}
Let $t\geq 5$ and $t\notin \{6,10,18,22\}$, there exists a pairwise balanced design (PBD) of order $6t$, with six blocks of size $t$ and $t^2$ blocks of size $6$. (or a $B(\{6,t\},1,6t)$).
\end{theorem}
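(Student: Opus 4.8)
The plan is to build the desired PBD from a transversal design, which in turn comes from the mutually orthogonal Latin squares guaranteed by Lemma \ref{4Mols}. Recall that a set of $m-2$ MOLS of order $t$ is equivalent to a transversal design $TD(m,t)$, i.e.\ an $m$-GDD of type $t^m$: a design on $mt$ points partitioned into $m$ groups of size $t$, whose blocks each meet every group in exactly one point, and in which every pair of points from distinct groups lies in exactly one block. For $t\ge 5$ with $t\notin\{6,10,18,22\}$, Lemma \ref{4Mols} supplies four MOLS of order $t$, hence a $TD(6,t)$, namely a $6$-GDD of type $t^6$ on a point set $X$ of size $6t$.

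First I would record the block data of this $6$-GDD. Its six groups have size $t$, and since each block is a transversal meeting all six groups exactly once, the blocks biject with the $t^2$ cells of one of the constituent Latin squares; thus there are exactly $t^2$ blocks, each of size $6$.

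Next I would complete the GDD to a PBD by adjoining the six groups themselves as blocks. Write $\mathcal{A}$ for the $t^2$ transversal blocks of size $6$ and $\mathcal{G}$ for the family of six groups, and take the block set to be $\mathcal{A}\cup\mathcal{G}$. I then verify the pair-covering condition on $(X,\mathcal{A}\cup\mathcal{G})$. A pair of points lying in a common group is covered exactly once, by that group block, and by no block of $\mathcal{A}$ since each transversal block meets a group in at most one point. A pair of points from two distinct groups is covered by exactly one block of $\mathcal{A}$, by the GDD property, and by no group block. Hence every pair of distinct points of $X$ occurs in exactly one block, and every block has size in $\{6,t\}$, so $(X,\mathcal{A}\cup\mathcal{G})$ is a $B(\{6,t\},1,6t)$ with six blocks of size $t$ and $t^2$ blocks of size $6$, as required.

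There is essentially no obstacle once the MOLS are in hand: the only point worth checking is that the orders excluded in the statement are exactly those for which Lemma \ref{4Mols} fails to guarantee four MOLS, namely $t<5$, $t=6$, and $t\in\{10,18,22\}$, which is precisely the hypothesis $t\ge 5$, $t\notin\{6,10,18,22\}$. The main (and routine) ingredient is the standard equivalence between four MOLS of order $t$ and a $TD(6,t)$; after that, adjoining the groups as blocks is immediate.
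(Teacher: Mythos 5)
Your proof is correct and follows essentially the same route as the paper: both obtain the design from the four MOLS of order $t$ guaranteed by Lemma \ref{4Mols}, form the $t^2$ transversal blocks of size $6$ (the paper writes these out explicitly as $\{a_1,a_2,a_3,a_4,i,j\}$ with $a_n=\ell^{\{n\}}_{ij}$, which is exactly the $TD(6,t)$ you invoke), and adjoin the six groups as the blocks of size $t$. The only difference is presentational: you cite the standard MOLS--$TD(6,t)$ equivalence and verify the pair condition, while the paper constructs the same blocks directly and leaves the verification implicit.
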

\begin{proof}
Let $V=\{1,2,\ldots ,t\}$, we define a pairwise balanced design $B(\{6,t\},1,6t)$ on set $V\times \{1,2,\ldots ,6\}$. By Theorem \ref{4Mols} there exist four mutually orthogonal Latin squares of order $t$, where $t\geq 5$ and $t\notin \{6,10,18,22\}$. Let $L_1$, $L_2$, $L_3$ and $L_4$ be four mutually orthogonal Latin squares of order $t$, where $L_n=(\ell^{\{n\}}_{ij})$, $i\in V\times \{5\}, j\in V\times \{6\}$ and $\ell^{\{n\}}_{ij}\in V\times \{n\}$ for $n\in \{1,2,3,4\}$. It is enough to consider six blocks $\{V\times \{m\}\}$ for $m\in \{1,2,\ldots ,6\}$ and $t^2$ blocks $\{\{a_1,a_2,a_3,a_4,i,j\}|~a_n\in V\times \{n\},~1\leq n\leq 4,~i\in V\times \{5\}, j\in V\times \{6\}\}$, where $\ell^{\{n\}}_{ij}=a_n$ for $1\leq n\leq 4$.
\end{proof}

The following theorem is similar to Theorem $4.1.3$ of \cite{Lindner-Rodger}.
\begin{theorem}{\label{STS-from-PBD}}
If there exists a $B(\{k_1,k_2,\ldots ,k_x\},1,r)$ for $r\equiv 0,1$ (mod $3$), and if there exists an $STS(2k_i+1)$ for $1\leq i\leq x $, then there exists an $STS(2r+1)$.
\end{theorem}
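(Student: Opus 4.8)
The plan is to build the $STS(2r+1)$ by a ``doubling plus one'' construction driven by the PBD, in the spirit of Theorem $4.1.3$ of \cite{Lindner-Rodger}. Let $(R,\mathcal{A})$ be the given $B(\{k_1,\ldots ,k_x\},1,r)$, so that $|R|=r$, and take the point set $X=(R\times \{1,2\})\cup \{\infty\}$, which has exactly $2r+1$ points. First I would fix the triples through $\infty$ once and for all, namely the flower $F=\{\{\infty,(p,1),(p,2)\}:p\in R\}$. These $r$ triples already cover every pair $\{\infty,(p,i)\}$ and every ``doubled'' pair $\{(p,1),(p,2)\}$ exactly once, and they will serve as the common flower at $\infty$.

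Next, for each block $B\in \mathcal{A}$ with $|B|=k$ I would invoke the hypothesis that an $STS(2k+1)$ exists. In any such system the flower at a chosen point is a perfect matching (a $1$-factor) on the remaining $2k$ points, and the triples lying off that flower cover every pair of those $2k$ points except the matched ones. Hence, relabelling the $2k$ non-$\infty$ points by the elements of $B\times \{1,2\}$ so that the flower matching becomes $\{\{(p,1),(p,2)\}:p\in B\}$, I keep only the non-flower triples. This produces a set of triples supported on $B\times \{1,2\}$ that covers every pair there except the doubled pairs $\{(p,1),(p,2)\}$.

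I would then let $\mathcal{B}$ be the union of $F$ with all the non-flower triples contributed by the blocks, and check that $(X,\mathcal{B})$ is an $STS(2r+1)$ having $F$ as its flower at $\infty$. The verification splits by pair type: pairs meeting $\infty$ and doubled pairs are taken care of by $F$ alone; a cross pair $\{(p,i),(q,j)\}$ with $p\neq q$ is covered precisely once, because $\{p,q\}$ lies in a unique block $B$ of the PBD and, inside that block's contribution, it is a non-matched pair and so appears in exactly one triple, while no other block sees both $p$ and $q$.

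The main obstacle to handle correctly is avoiding double coverage of the pairs incident with $\infty$ and of the doubled pairs: since a point $p$ typically lies in several blocks, if each block independently supplied its own flower at $\infty$ these pairs would be covered repeatedly. Placing the flower globally (once) and feeding each block only its ``$STS(2k+1)$ minus flower'' is exactly what removes this conflict; the only other delicate point is the relabelling step, which is legitimate because one is free both to choose which point of the $STS(2k+1)$ plays the role of $\infty$ and to coordinatize the resulting matching as $\{\{(p,1),(p,2)\}:p\in B\}$. Finally, the assumption $r\equiv 0,1$ (mod $3$) guarantees $2r+1\equiv 1,3$ (mod $6$), so the target order is admissible.
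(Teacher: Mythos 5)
Your proposal is correct and is essentially identical to the paper's own proof: the same point set $\{\infty\}\cup(R\times\{1,2\})$, the same global flower $\{\{\infty,(p,1),(p,2)\}:p\in R\}$, and the same insertion, for each PBD block $B$, of the non-flower triples of a relabelled $STS(2|B|+1)$ whose flower at $\infty$ matches the doubled pairs. You merely spell out the pair-by-pair verification that the paper leaves as ``easy to see.''
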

\begin{proof}
Let $(X,\mathcal{B})$ be a $B(\{k_1,k_2,\ldots ,k_x\},1,r)$ with $X=\{1,2,\ldots ,r\}$. Define an $STS(2r+1)$ $(Y,\mathcal{A})$ with $Y=\{\infty\}\cup (\{1,2,\ldots ,r\}\times \{1,2\})$ as follows.
\begin{itemize}
\item[(1)] For $1\leq i\leq r$, $\{\infty ,(i,1),(i,2)\}\in \mathcal{A}$, 
\item[(2)] For each block $B\in \mathcal{B}$, let $(Y(B),\mathcal{A}(B))$ be an $STS(2|B|+1)$, where $Y(B)=\{\infty\}\cup (B\times \{1,2\})$ and where the symbols have been named so that $\{\infty , (i,1),(i,2)\}\in \mathcal{A}(B)$ for all $i\in B$, and let $\mathcal{A}(B)\setminus \{\{\infty , (i,1),(i,2)\}|~i\in B\}\subseteq \mathcal{A}$.
\end{itemize}
It is easy to see that $(Y,\mathcal{A})$ is an $STS(2r+1)$.
\end{proof}
\begin{theorem}{\label{PBD Construction}}
Let $k\equiv 0,1$ (mod $3$) for each $k\in K$ and $r\equiv 0,1$ (mod $3$). If there exists a $B(K, 1, r)$ $(X, \mathcal{B})$ such that $k_{B}\in J^{3}_{F}(|B|)$ for each $B\in \mathcal{B}$, then $\sum_{B\in \mathcal{B}}k_{B}\in J^{3}_{F}(r)$.
\end{theorem}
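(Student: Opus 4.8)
The plan is to run the scaffold of Theorem~\ref{STS-from-PBD} three times in parallel on the common point set $Y=\{\infty\}\cup(X\times\{1,2\})$, producing three $STS(2r+1)$ that share a prescribed set of triples. The essential change is that, for each block $B\in\mathcal{B}$, instead of inserting one $STS(2|B|+1)$ on $Y(B)=\{\infty\}\cup(B\times\{1,2\})$, I would insert a collection of three $STS(2|B|+1)$ witnessing $k_B\in J^3_F(|B|)$, all sharing a flower placed at $\infty$.

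First I would fix, for each block $B$, three systems $(Y(B),\mathcal{A}^{(1)}(B))$, $(Y(B),\mathcal{A}^{(2)}(B))$, $(Y(B),\mathcal{A}^{(3)}(B))$ that pairwise meet in the same set of $k_B+|B|$ triples, $|B|$ of which form a common flower. Since this flower is shared by all three systems, it is a single perfect matching on the $2|B|$ non-central points; I would relabel those points as $B\times\{1,2\}$ so that the matching pairs $(i,1)$ with $(i,2)$ and the centre becomes $\infty$. After this relabelling the common flower is exactly $\{\{\infty,(i,1),(i,2)\}:i\in B\}$ in all three systems, matching the form used in Theorem~\ref{STS-from-PBD}.

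For $j=1,2,3$ I would then set $\mathcal{A}^{(j)}$ to be the union of the $r$ spine triples $\{\infty,(i,1),(i,2)\}$, $i=1,\dots,r$, with $\bigcup_{B\in\mathcal{B}}\bigl(\mathcal{A}^{(j)}(B)\setminus\{\{\infty,(i,1),(i,2)\}:i\in B\}\bigr)$. By the argument of Theorem~\ref{STS-from-PBD}, each $(Y,\mathcal{A}^{(j)})$ is an $STS(2r+1)$, and its flower at $\infty$ is precisely the set of $r$ spine triples, common to all three systems. A non-spine triple avoids $\infty$, hence lies in $B\times\{1,2\}$, and cannot contain both $(i,1)$ and $(i,2)$ (that pair is already covered by a spine triple), so its three first coordinates are distinct elements of a single block $B$. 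As any two blocks of the PBD meet in at most one point, this $3$-subset of first coordinates lies in a unique block, so non-spine triples coming from distinct blocks are distinct. Therefore the set of triples common to any two of the three big systems is the disjoint union of the $r$ spine triples and, over all blocks $B$, the $k_B$ non-flower triples common to the three small systems on $Y(B)$; this set is the same for each of the three pairs and has size $r+\sum_{B\in\mathcal{B}}k_B$, with the spine triples forming the flower at $\infty$. Hence $\sum_{B\in\mathcal{B}}k_B\in J^3_F(r)$.

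The step needing the most care is the simultaneous relabelling: one must check that the common flower of the three witnesses for $k_B\in J^3_F(|B|)$ can be identified with the fixed pairing $(i,1)\leftrightarrow(i,2)$ on $B\times\{1,2\}$ in all three systems at once. This is exactly where the hypothesis that the flower is \emph{common} to the three systems is used---it is one and the same matching in each, so a single relabelling serves for all three. Everything else is the bookkeeping of Theorem~\ref{STS-from-PBD} together with the disjointness of the contributions from different blocks.
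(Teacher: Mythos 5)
Your proposal is correct and follows essentially the same route as the paper: both run the construction of Theorem~\ref{STS-from-PBD} three times in parallel, filling each block $B$ with three $STS(2|B|+1)$s realizing $k_B\in J^3_F(|B|)$ with their common flower normalized to the spine triples $\{\infty,(i,1),(i,2)\}$, and then take the union. Your added checks (the simultaneous relabelling of the common flower and the disjointness of contributions from distinct blocks) are exactly the details the paper leaves implicit.
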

\begin{proof}
Since $r\equiv 0,1$ (mod $3$) and for each $k\in K$, $k\equiv 0,1$ (mod $3$), we can use the construction which has been explained in Theorem {\ref{STS-from-PBD}} to form a collection of three $STS(2r + 1)$s $(Y, \mathcal{A}_1)$, $(Y,\mathcal{A}_2)$ and $(Y, \mathcal{A}_3)$ as follows. For each $B\in \mathcal{B}$, since $k_B\in J^{3}_{F}(|B|)$, then we may form a collection of three $STS(2|B|+1)$s $(Y(B), \mathcal{A}_1(B))$, $(Y(B), \mathcal{A}_2(B))$ and $(Y(B), \mathcal{A}_3(B))$ such that their $3$-way flower intersection number is $k_B$ and with flower at $\infty$. For $i =1, 2, 3$, we define
\\$\mathcal{A}_i =\{\{\infty , x_1 ,x_2\}|~x\in B\}\cup \{\cup_{B\in \mathcal{B}}\{\mathcal{A}_i(B)\setminus \{\{\infty , x_1 , x_2\}|~x\in B\}\}$.

So $(Y,\mathcal{A}_1)$, $(Y, \mathcal{A}_2)$ and $(Y,\mathcal{A}_3)$ are three $STS(2r + 1)$s with $3$-way flower intersection number $\sum_{B\in \mathcal{B}}k_{B}$ and the flower is on $\infty$. So $\sum_{B\in \mathcal{B}}k_B\in J^{3}_{F}(r)$.
\end{proof}

In the following, some auxiliary lemmas are expressed.
\begin{lemma}{\label{0-in-J^3_F}}
For all admissible $r\geq 4$, $0\in J^3_F(r)$.
\end{lemma}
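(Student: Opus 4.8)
The plan is to prove $0\in J^{3}_{F}(r)$ by induction on $r$, using the recursive constructions of this section with \emph{all} free intersection parameters set to zero, and to dispatch a short list of small and exceptional values by direct constructions. Recall that $0\in J^{3}_{F}(r)$ asserts the existence of three $STS(2r+1)$ that agree on exactly the $r$ triples of a common flower and on no other triple.

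First I would record the zero-input specializations of Theorems \ref{STS(3v) Construction}--\ref{STS(2v+1) Construction}. Since $0\in J'^3(n)$ for every $n\ge 3$ and $0\in J^{3}(2r+1)$ for every $r\ge 4$ (Lemmas \ref{3way-Latin} and \ref{3STS}; the only failure $J^{3}(7)$ corresponds to the excluded value $r=3$), and since the ``constant secondary diagonal with $b=0$'' hypothesis of Theorem \ref{STS(3v) Construction} holds because $2r+1\in J''^3(2r+1)$ by Theorem \ref{Latin J''^3(2n+1)} (take $0\in J'^3(r)$, the summand $0$ from $(r+1)\{[0,r-3]\cup\{r\}\}$, and the summand $2r+1$ from $(2r+1)\{[1,r-2]\cup\{r+1\}\}$), these theorems reduce to the implications
$$0\in J^{3}_{F}(x)\ \Rightarrow\ 0\in J^{3}_{F}(2x+1),\ \ 0\in J^{3}_{F}(3x),\ \ 0\in J^{3}_{F}(3x+1),$$
valid for admissible $x\ge 4$, together with $0\in J^{3}_{F}(m)\Rightarrow 0\in J^{3}_{F}(3m-2)$ coming from Theorem \ref{STS(3v+3) Construction} (put $m=r+1$). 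I would also use Theorem \ref{PBD Construction} with Theorem \ref{6t-PBD}: assigning $k_{B}=0$ to every block of a $B(\{6,t\},1,6t)$ yields $0\in J^{3}_{F}(6t)$ whenever $0\in J^{3}_{F}(6)$, $0\in J^{3}_{F}(t)$, and $t\ge 5$, $t\notin\{6,10,18,22\}$.

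Next I would establish the base cases $r\in\{4,6,7\}$ by exhibiting three $STS(9)$, $STS(13)$, $STS(15)$ that share only a flower, by explicit (possibly computer-aided) constructions. The inductive step then splits on the residue of $r$. For $r\equiv 1\pmod 3$, the preimages $(r-1)/3$ (under $3x+1$) and $(r+2)/3$ (under $3m-2$) are consecutive integers, so at least one is admissible, and for $r\ge 10$ one may choose an admissible preimage lying in $[4,r)$. For $r\equiv 0\pmod 3$ with $r/3$ admissible and $r\ge 12$, the map $3x$ applies. For the remaining case $r\equiv 6\pmod 9$, I would apply $2x+1$ when $r$ is odd (preimage $(r-1)/2$, which is admissible and $\ge 4$), and the $B(\{6,t\},1,6t)$ construction when $r=6t$ is even with $t\ge 5$, $t\notin\{4,10,22\}$.

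The main obstacle is twofold. First, the small base cases $r=6,7$ must be produced by hand, since no recursion feeds them, and one must genuinely verify that disjoint-except-flower triples of systems exist on $13$ and $15$ points. Second, a finite set of even values $r\equiv 6\pmod 9$ with $r/6\in\{4,10,22\}$, namely $r=24,60,132$, is reached by none of the maps above; for $r=60$ and $r=132$ I would instead invoke an alternative pairwise balanced design of that order whose blocks all have admissible size $\ge 4$ (for instance $B(\{4,15\},1,60)$ and $B(\{4,33\},1,132)$ obtained from transversal designs $TD(4,15)$ and $TD(4,33)$, using the already established $0\in J^{3}_{F}(15)$ and $0\in J^{3}_{F}(33)$). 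This leaves $r=24$, where the natural candidates $TD(4,6)$ and $TD(6,4)$ both fail to exist, to a dedicated direct construction of three $STS(49)$ meeting exactly in a flower.
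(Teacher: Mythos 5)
Your zero-parameter specializations of the recursive theorems are correct (in particular the computation $0+0\cdot(r+1)+1\cdot(2r+1)=2r+1\in J''^3(2r+1)$ via Theorem \ref{Latin J''^3(2n+1)} does give the $b=0$ Latin square hypothesis, and $0\in J^3(2r+1)$ holds for all $r\geq 4$), so your induction does cover most admissible orders. But this is not the paper's route at all: the paper proves the lemma in one stroke, with no induction, no base cases and no exceptional orders. Delete the flower point $x$ from an $STS(2r+1)$; the remaining triples form a $3$-GDD of type $2^r$ whose groups are the pairs $\{a,b\}$ with $\{a,b,x\}\in\mathcal{F}_x$. By \cite{LargeSet-GDD} a large set of pairwise disjoint $3$-GDDs of type $2^r$ exists for $r\equiv 0,1 \pmod 3$, consisting of $2(r-2)\geq 3$ members when $r\geq 4$; taking any three of them and gluing the same flower back onto each yields three $STS(2r+1)$s agreeing in exactly the flower. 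This uniformity is essential to the paper's logic: the lemma is then fed \emph{into} the recursive machinery (it is cited to get $0\in J^3_F(r)$ for $r=4,6,9,24,\dots$), not derived from it.

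That inversion is exactly where your proposal has a genuine gap: $r=24$. None of your maps reaches it ($24\neq 3x+1$, $24\neq 3m-2$, $24$ is even so $2x+1$ fails, $24/3=8$ is inadmissible, and $24=6t$ gives $t=4$, below the range of Theorem \ref{6t-PBD}); your own fallbacks fail since neither $TD(4,6)$ nor $TD(6,4)$ exists; and any PBD on $24$ points using blocks of size $3$ is useless here because $J^3_F(3)=\{4\}$, so such a block can never contribute $0$ --- this is precisely why the paper's Theorem \ref{J^3_F(24)}, which uses a $\{4\}$-GDD of type $3^46^2$, cannot produce values below $16$ and cites this very lemma for the value $0$. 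Deferring $r=24$ to ``a dedicated direct construction of three $STS(49)$ meeting exactly in a flower'' therefore postpones the one case your method cannot handle; it is the missing content of the proof, not a routine loose end. Two lesser issues: $r=9$ falls through your case split ($9\equiv 0\pmod 9$ but $9<12$, and $9$ is not among your base cases); it is recoverable via the doubling map from $r=4$ since $9=2\cdot 4+1$. And your base cases $r=4,6,7$ are only asserted to exist, whereas the paper's argument needs no such ad hoc constructions.
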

\begin{proof}
Let $(X,\mathcal{B})$ be an $STS(2r+1)$ and write $\mathcal{B}=\mathcal{F}_x\cup \mathcal{C}$, where $\mathcal{F}_x$ is the flower at the point $x$. Let $Y=X-\{x\}$ and $\mathcal{G}=\{\{a,b\}:~\{a,b,x\}\in \mathcal{F}_x\}$. Then $(Y,\mathcal{G},\mathcal{C})$ is a $3$-GDD of type $2^r$. Since the maximum number of disjoint $3$-GDDs of type $2^r$ is $2(r-2)$ and there exists a large set of $3$-GDD of type $2^r$ for $r\equiv 0, 1$ (mod $3$) \cite{LargeSet-GDD}, the result follows.
\end{proof}
\begin{lemma}{\label{2r(r-1)/3-in-J^3_F}}
For all admissible $r$, $\frac{2r(r-1)}{3}\in J^3_F(r)$.
\end{lemma}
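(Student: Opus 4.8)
The plan is to observe that $\frac{2r(r-1)}{3}$ is exactly the maximal possible value of a $3$-way flower intersection number, so that attaining it forces the three systems to be identical; the lemma then reduces to the mere existence of a single $STS(2r+1)$. First I would record the arithmetic. The total number of triples in an $STS(2r+1)$ is $t_{2r+1}=\frac{(2r+1)(2r)}{6}=\frac{r(2r+1)}{3}$, while a flower at any point consists of exactly $r$ triples. Subtracting gives $t_{2r+1}-r=\frac{r(2r+1)}{3}-r=\frac{r(2r-2)}{3}=\frac{2r(r-1)}{3}$. Hence a collection of three $STS(2r+1)$s sharing a common flower ($r$ triples) together with $\frac{2r(r-1)}{3}$ further common triples must in fact agree on all $t_{2r+1}$ triples, i.e.\ the three systems coincide.

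Next I would invoke existence. Since $r$ is admissible we have $r\equiv 0,1$ (mod $3$), which is equivalent to $2r+1\equiv 1,3$ (mod $6$); so by Hanani's theorem an $STS(2r+1)$ exists. Fix one such system $(X,\mathcal{B})$ and any point $x\in X$, and take all three systems to be equal to $(X,\mathcal{B})$. They then share the flower $\mathcal{F}_x$ at $x$, which has $r$ triples, together with every one of the remaining $\frac{2r(r-1)}{3}$ triples of $\mathcal{B}$. Consequently their $3$-way flower intersection number is exactly $\frac{2r(r-1)}{3}$, which shows $\frac{2r(r-1)}{3}\in J^3_F(r)$.

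There is essentially no obstacle in this proof: the only substantive ingredients are the identity $t_{2r+1}-r=\frac{2r(r-1)}{3}$ and the existence of a single $STS(2r+1)$, both immediate for admissible $r$. The lemma merely records the top endpoint of $I^3_F(r)$, realised trivially by three copies of the same design.
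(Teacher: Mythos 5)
Your proof is correct and is essentially the paper's own argument: the paper's entire proof is ``it is enough to consider the same $STS(2r+1)$ three times,'' and you simply make explicit the supporting arithmetic ($t_{2r+1}-r=\frac{2r(r-1)}{3}$) and the existence of an $STS(2r+1)$ for admissible $r$ that the paper leaves implicit.
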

\begin{proof}
It is enough to consider the same $STS(2r+1)$ three times.
\end{proof}

The following lemma is similar to Lemma $1$ of \cite{2FSTS}.
\begin{lemma}{\label{lem-k>(2r(r-3)/3)}}
Let $v=2r+1\equiv 1, 3$ (mod $6$), for $r\geq6$ and $k\geq\frac{2r(r-3)}{3}$, if $k+r\in J^3(v)$, then $k\in J^3_F(r)$. 
\end{lemma}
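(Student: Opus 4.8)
The plan is to start from any realization of $k+r\in J^3(v)$ and to locate a single point whose \emph{entire} flower already sits inside the common block set; once such a point is found the conclusion $k\in J^3_F(r)$ is immediate. Concretely, I would fix three systems $(X,\mathcal B_1)$, $(X,\mathcal B_2)$, $(X,\mathcal B_3)$ of order $v=2r+1$ whose common block set $C=\mathcal B_1\cap\mathcal B_2\cap\mathcal B_3$ has size $k+r$, and set $D_i=\mathcal B_i\setminus C$. Each $D_i$ has size $N=t_v-(k+r)$, and since $t_v=r(2r+1)/3$ and $k\geq\frac{2r(r-3)}{3}$, a direct computation gives $N\leq\frac{4r}{3}$. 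Because the $D_i$ are precisely the blocks lying outside the common set, they are pairwise disjoint, and each $D_i$ is a triangle decomposition of one and the same graph $L$ on $X$ whose edges are exactly the pairs not covered by $C$; in particular $|E(L)|=3N$, and a point $x$ has all $r$ of its blocks in $C$ (the same $r$ blocks in all three systems) if and only if $x$ is isolated in $L$.

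The crux — and the point where a naive estimate fails — is that the obvious bound ``number of non-isolated vertices $\leq 3N\leq 4r$'' is useless, since $4r>v$. The gain comes from using that we have \emph{three} systems rather than one. As $D_1,D_2,D_3$ are pairwise disjoint triangle decompositions of $L$, every edge of $L$ lies in exactly one triangle of each $D_i$, and these three triangles are pairwise distinct; hence every edge of $L$ lies in at least three triangles \emph{of $L$}. Consequently the two endpoints of any edge have at least three common neighbours, so every non-isolated vertex of $L$ has degree at least $4$. This degree jump (from the trivial $2$ to $4$) is exactly what the weak hypothesis on $k$ needs, and it is the step I expect to be the main obstacle to spot; it is where the present lemma genuinely improves on the two-system argument of \cite{2FSTS}.

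Counting then finishes everything. Writing $p$ for the number of non-isolated vertices of $L$, the degree bound yields $4p\leq\sum_{u\in X}\deg_L(u)=2\,|E(L)|=6N$, so $p\leq\frac{3N}{2}\leq 2r<v$. Hence $L$ has an isolated vertex $x$: no non-common block contains $x$, so all $r$ blocks of each $\mathcal B_i$ through $x$ lie in $C$ and therefore coincide. These $r$ blocks form a common flower at $x$ contained in the $k+r$ common blocks of $\mathcal B_1,\mathcal B_2,\mathcal B_3$, which is exactly the statement $k\in J^3_F(r)$. The hypothesis $r\geq 6$ is used only to keep the threshold $\frac{2r(r-3)}{3}$ positive and in the range where the estimate $N\leq\frac{4r}{3}$ is meaningful; the structural argument above is otherwise uniform in $r$.
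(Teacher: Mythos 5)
Your proof is correct and is essentially the paper's own argument (itself adapted from Lemma~1 of \cite{2FSTS}): both reduce to finding an isolated vertex of the leave graph $L$ of the common blocks, establish that every non-isolated vertex of $L$ has degree at least $4$ (equivalently, lies in at least two non-common triples of the first system), and finish with the same count $p\le\frac{3N}{2}\le 2r<v$, where $N=t_v-(k+r)\le\frac{4r}{3}$. The only divergence is how the degree bound is derived --- the paper compares the flower at a point across two systems (the non-common flower parts cover the same pairs at that point, so if each consisted of a single triple those triples would coincide, contradicting disjointness), whereas you use the pairwise disjointness of all three decompositions $D_1,D_2,D_3$ to put each edge of $L$ in three distinct triangles; these are equivalent in strength, so your aside that this step is a genuine improvement over, and unavailable to, the two-system argument of \cite{2FSTS} is inaccurate, though it does not affect the validity of your proof.
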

\begin{proof}
It is enough to assume a point $x$ is contained in a triple of the first system that is not a triple of the other systems, then obviously it must be contained in at least two such triples. A simple calculation now shows that there must be at least one point $x$ for which the triples through $x$ are the same in all systems.
\end{proof}
\begin{corollary}{\label{cor-k>(2r(r-3)/3)}}
For all admissible $r\geq 9$, let $k\in I^3_F(r)$ with $k\geq\frac{2r(r-3)}{3}$, then $k\in J^3_F(r)$.
\end{corollary}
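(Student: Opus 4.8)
The plan is to reduce the assertion to the ordinary $3$-way intersection problem for $STS(2r+1)$ by means of Lemma~\ref{lem-k>(2r(r-3)/3)}, and then read off the answer from the known value of $J^3(v)$ for large $v$ recorded in Lemma~\ref{3STS}. Write $v=2r+1$, let $t_v=\frac{v(v-1)}{6}=\frac{r(2r+1)}{3}$ be the number of triples of an $STS(v)$, and set $M=\frac{2r(r-1)}{3}$ for the maximum $3$-way flower intersection number. The arithmetic identity that makes the whole argument work is $M+r=t_v$: indeed $\frac{2r(r-1)}{3}+r=\frac{2r(r-1)+3r}{3}=\frac{r(2r+1)}{3}=t_v$, so adding $r$ sends the top of the flower range exactly to the top of the full intersection range.

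First I would note that the hypothesis $r\geq 9$ forces $v=2r+1\geq 19$, so Lemma~\ref{3STS} applies and gives $J^3(v)=I^3(v)=S^3[t_v]$. Consequently, for any integer $m$ we have $m\in J^3(v)$ iff $m\leq t_v$ and $m\notin\{t_v-1,t_v-2,t_v-3,t_v-4,t_v-5,t_v-7\}$. The goal is therefore to show that $k\in I^3_F(r)$ implies $k+r\in I^3(v)$, and then to feed this into Lemma~\ref{lem-k>(2r(r-3)/3)}.

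The central step is the translation of $k\in I^3_F(r)=S^3[M]$ into a statement about $k+r$. By definition $S^3[M]$ consists of the non-negative integers at most $M$ with the six exceptions $M-1,M-2,M-3,M-4,M-5,M-7$. Since $k\leq M$, we get $k+r\leq M+r=t_v$, so $k+r$ ranges over $\{r,r+1,\ldots,t_v\}$ with exactly the exceptions $t_v-1,t_v-2,t_v-3,t_v-4,t_v-5,t_v-7$. These are precisely the exceptions defining $I^3(v)=S^3[t_v]$, and as $k+r\geq r\geq 0$ automatically, it follows that $k\in I^3_F(r)$ implies $k+r\in I^3(v)=J^3(v)$.

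Finally, since $r\geq 9\geq 6$, since $k\geq\frac{2r(r-3)}{3}$ by hypothesis, and since we have just shown $k+r\in J^3(v)$, all hypotheses of Lemma~\ref{lem-k>(2r(r-3)/3)} are satisfied, and it yields $k\in J^3_F(r)$, as required. The only genuinely delicate point is the bookkeeping in the central step: one must verify that the shift by $r$ carries the exceptional set $\{M-i\}$ of $S^3[M]$ exactly onto the exceptional set $\{t_v-i\}$ of $S^3[t_v]$, which rests entirely on the identity $M+r=t_v$ together with the fact that $S^3[\cdot]$ uses the same exception pattern at both $M$ and $t_v$. No separate combinatorial construction is needed, because Lemma~\ref{lem-k>(2r(r-3)/3)} already packages the passage from the intersection problem to the flower intersection problem.
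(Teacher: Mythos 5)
Your proposal is correct and follows exactly the paper's own route: invoke Lemma~\ref{3STS} (since $r\geq 9$ gives $v=2r+1\geq 19$, so $J^3(v)=I^3(v)$) and then apply Lemma~\ref{lem-k>(2r(r-3)/3)}. The only difference is that you make explicit the arithmetic bookkeeping (the identity $\frac{2r(r-1)}{3}+r=t_v$, which aligns the exceptional sets of $S^3[\frac{2r(r-1)}{3}]$ and $S^3[t_v]$) that the paper leaves implicit.
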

\begin{proof}
From \cite{3STS}, $J^3(v)=I^3(v)$ for every $v\geq 19$. Lemma \ref{lem-k>(2r(r-3)/3)} completes the proof.
\end{proof}
\section{Small Cases}
In this section we discuss some small admissible values of $r$, needed for general constructions.

For a $v$-set $X$, let $(X,\mathcal{A})$, $(X,\mathcal{B})$ and $(X,\mathcal{C})$ be three Steiner triple systems of order $v$. For convenience, we introduce a notation in this section. $|\mathcal{A}\cap \mathcal{B}\cap \mathcal{C}|_F=k$, means that three $STS(v)$s, $(X,\mathcal{A})$, $(X,\mathcal{B})$ and $(X,\mathcal{C})$, have $3$-way flower intersection number $k$.

The following theorem is obvious.
\begin{theorem}{\label{J^3_F(1)}}
$J^3_F(1)=\{0\}$.
\end{theorem}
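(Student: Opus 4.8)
The plan is to prove the two inclusions $J^3_F(1)\subseteq\{0\}$ and $\{0\}\subseteq J^3_F(1)$ separately; both collapse to a direct computation, because the case $r=1$ is degenerate. The first thing I would record is that $r=1$ means $v=2r+1=3$, and that the number of triples in an $STS(3)$ is $t_3=3\cdot 2/6=1$. Hence on any $3$-set $\{a,b,c\}$ the unique $STS(3)$ consists of the single triple $\{a,b,c\}$, and the flower at any one of its three points is exactly that one triple, so it accounts for all $r=1$ flower triples.

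For the upper bound I would invoke Lemma~\ref{necessary conditions}, which gives $J^3_F(1)\subseteq I^3_F(1)$. It then suffices to evaluate $I^3_F(1)=S^3[\frac{2\cdot 1\cdot(1-1)}{3}]=S^3[0]$. By definition $S^3[n]$ is obtained from $\{0,1,\ldots,n\}$ by deleting $n-1,n-2,n-3,n-4,n-5,n-7$; when $n=0$ every one of these deleted values is negative and hence absent to begin with, so $S^3[0]=\{0\}$. This yields $J^3_F(1)\subseteq\{0\}$.

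For the lower bound I would simply take the unique $STS(3)$ three times. These three (identical) systems pairwise agree on all $t_3=1$ of their triples, and that single common triple is precisely the flower at the chosen point; thus they share $k+r=1$ triples with $r=1$ of them forming the flower, forcing $k=0$. Hence $0\in J^3_F(1)$, and combining the two inclusions gives $J^3_F(1)=\{0\}$.

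There is no genuine obstacle here, which is why the statement is flagged as obvious: the entire content is the degeneracy of $r=1$, where an $STS(3)$ carries a single block. The only points demanding care are the bookkeeping in computing $S^3[0]$ and the observation that one shared triple simultaneously realises the whole flower, so that the only admissible value of $k$ is $0$.
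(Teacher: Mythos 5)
Your proof is correct: the paper offers no argument for this theorem (it is simply declared obvious), and your direct verification — the upper bound $J^3_F(1)\subseteq I^3_F(1)=S^3[0]=\{0\}$ via Lemma~\ref{necessary conditions}, and the lower bound by taking the unique $STS(3)$ three times, whose single common triple is simultaneously the flower — is exactly the intended degenerate-case computation. Note also that your lower bound is the $r=1$ instance of Lemma~\ref{2r(r-1)/3-in-J^3_F}, which the paper proves by the same ``take the same system three times'' device.
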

\begin{theorem}{\label{J^3_F(3)}}
$J^3_F(3)=\{4\}$.
\end{theorem}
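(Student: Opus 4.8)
The plan is to observe that for $r=3$ the whole statement collapses to two facts already established in the excerpt, so essentially no new construction is needed. First I would compute $I^3_F(3)$ explicitly. By definition $I^3_F(r)=S^3[\frac{2r(r-1)}{3}]$, and for $r=3$ we have $\frac{2r(r-1)}{3}=\frac{2\cdot 3\cdot 2}{3}=4$, so $I^3_F(3)=S^3[4]$. Recalling that $S^3[n]$ is the set of non-negative integers at most $n$ with $n-1,\,n-2,\,n-3,\,n-4,\,n-5,\,n-7$ removed, the exceptions for $n=4$ are $3,2,1,0,-1,-3$; hence every element of $\{0,1,2,3,4\}$ other than $4$ is deleted and $I^3_F(3)=\{4\}$.

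Next I would invoke the necessary-conditions lemma (Lemma \ref{necessary conditions}), which gives $J^3_F(r)\subseteq I^3_F(r)$ for every admissible $r$. Since $r=3\equiv 0\pmod 3$ is admissible, specializing yields $J^3_F(3)\subseteq I^3_F(3)=\{4\}$. Thus the only candidate value is $k=4$, which corresponds to the three $STS(7)$s sharing all $t_7=7$ triples (the $r=3$ triples of the common flower together with $k=4$ further triples).

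To confirm that $4$ is genuinely attained, I would apply Lemma \ref{2r(r-1)/3-in-J^3_F}, which states $\frac{2r(r-1)}{3}\in J^3_F(r)$ for every admissible $r$; for $r=3$ this is precisely $4\in J^3_F(3)$, realized simply by taking the same $STS(7)$ three times. Combining the inclusion $J^3_F(3)\subseteq\{4\}$ with the membership $4\in J^3_F(3)$ yields $J^3_F(3)=\{4\}$.

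There is no substantial obstacle here: the entire content reduces to correctly evaluating the exceptional set $S^3[4]$, whose small threshold $n=4$ happens to remove every integer strictly below the maximum. The only point worth double-checking is that $r=3$ is admissible, so that both cited lemmas legitimately apply; this holds because $3\equiv 0\pmod 3$.
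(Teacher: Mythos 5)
Your proof is correct, and it reaches the upper bound by a slightly different route than the paper. The paper's own proof invokes the second inclusion of Lemma \ref{necessary conditions} together with the external classification $J^3(7)=\{1,7\}$ (Theorem \ref{3STS}): any flower intersection number $k$ would give $k+3\in J^3(7)$ with $k+3\geq 3$, forcing $k+3=7$, i.e. $k=4$. You instead use the first inclusion $J^3_F(3)\subseteq I^3_F(3)$ and simply evaluate $I^3_F(3)=S^3[4]=\{4\}$, exploiting the fact that for the small threshold $n=4$ the exceptional values $n-1,\dots,n-5,n-7$ wipe out everything below the maximum. Both arguments then finish identically with Lemma \ref{2r(r-1)/3-in-J^3_F} (the same $STS(7)$ taken three times). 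What your route buys is self-containment: you need only the definitional arithmetic of $S^3[4]$ plus the stated necessary-conditions lemma, and never cite the Milici--Quattrocchi result on $J^3(7)$. What the paper's route buys is a more concrete grounding: $J^3(7)=\{1,7\}$ is an exact classification, whereas the inclusion $J^3_F(r)\subseteq I^3_F(r)$ is asserted in Lemma \ref{necessary conditions} without proof (its justification ultimately rests on the same trade-volume considerations that underlie $I^3(v)$, or alternatively it can be derived from the second inclusion together with $J^3(v)\subseteq I^3(v)$). Since the paper states Lemma \ref{necessary conditions} as a standing result, your use of it is legitimate and your proof is complete.
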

\begin{proof}
By Theorem \ref{3STS}, $J^3(7)=\{1,7\}$, so the only $3$-way flower intersection number of $STS(7)$ can be $4$. Using Lemma {\ref{2r(r-1)/3-in-J^3_F}} completes the proof.
\end{proof}
\begin{theorem}{\label{J^3_F(4)}}
$J^3_F(4)=\{0,8\}$.
\end{theorem}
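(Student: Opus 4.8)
The plan is to squeeze $J^3_F(4)$ between an upper bound coming from the necessary conditions and explicit existence results already proved above, and to observe that the two bounds coincide. Throughout, $r=4$ is admissible and $v=2r+1=9$, with $t_9=12$ and $\frac{2r(r-1)}{3}=\frac{24}{3}=8$.

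For the upper bound I would invoke Lemma \ref{necessary conditions}, specifically the containment $J^3_F(4)\subseteq\{a-r\mid a\in J^3(2r+1),\ a\geq r\}$. By Lemma \ref{3STS} we have $J^3(9)=\{0,1,3,4,12\}$, whose elements that are at least $r=4$ are $4$ and $12$; subtracting $r=4$ yields $\{0,8\}$. Hence $J^3_F(4)\subseteq\{0,8\}$. This single containment already eliminates every candidate except $0$ and $8$, and it is the sharpest of the three necessary conditions: one can cross-check that $I^3_F(4)=S^3[8]=\{0,2,8\}$ and, via Lemma \ref{2FSTS}, $J_F(4)=\{0,2,8\}$, so the other two conditions only narrow the search to $\{0,2,8\}$, while the $J^3(9)$-condition additionally rules out $2$.

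For the lower bound I would simply verify that both surviving values are attained. Since $r=4\geq 4$ is admissible, Lemma \ref{0-in-J^3_F} gives $0\in J^3_F(4)$. Since $\frac{2r(r-1)}{3}=8$, Lemma \ref{2r(r-1)/3-in-J^3_F} gives $8\in J^3_F(4)$ (take the same $STS(9)$ three times). Combining the two bounds yields $J^3_F(4)=\{0,8\}$.

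There is essentially no hard step here: the theorem is a corollary of results established earlier in the paper, and the only point requiring care is the bookkeeping that shows the value $2$, which is admissible by the $I^3_F$ and $J_F$ criteria, is excluded by the $J^3(9)$ criterion. I would therefore present the argument as a short deduction, making explicit the arithmetic $\{a-4\mid a\in\{0,1,3,4,12\},\,a\geq4\}=\{0,8\}$ so that the reduction of the three necessary conditions to a single pair of values is transparent.
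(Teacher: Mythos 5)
Your proposal is correct and follows essentially the same route as the paper: the upper bound $J^3_F(4)\subseteq\{0,8\}$ via $J^3(9)=\{0,1,3,4,12\}$ and the necessary condition, and the lower bound via Lemma \ref{0-in-J^3_F} (valid since $r=4\geq 4$) and Lemma \ref{2r(r-1)/3-in-J^3_F}. Your extra cross-check that the other two necessary conditions only give $\{0,2,8\}$ is accurate but not needed.
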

\begin{proof}
By Theorem \ref{3STS}, $J^3(9)=\{0,1,3,4,12\}$, so the only $3$-way flower intersection numbers of $STS(9)$ can be $\{0,8\}$. Lemmas {\ref{0-in-J^3_F}} and {\ref{2r(r-1)/3-in-J^3_F}} completes the proof. 
\end{proof}
\begin{theorem}{\label{J^3_F(6)}}
$[0,5]\cup \{7,20\}\subseteq J^3_F(6)\subseteq [0,7]\cup \{20\}$.
\end{theorem}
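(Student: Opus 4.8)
The plan is to prove the two containments separately, treating $r=6$ (equivalently $v=2r+1=13$) as a genuine base case: a quick check of the output flower indices of Theorems \ref{STS(3v) Construction}, \ref{STS(3v+1) Construction}, \ref{STS(3v+3) Construction} and \ref{STS(2v+1) Construction} shows that none of them yields flower index $6$ from a smaller \emph{admissible} index (for instance Theorem \ref{STS(3v+1) Construction} would require $3r=6$, i.e.\ the inadmissible $r=2$), so everything must be done directly at order $13$.

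For the upper bound $J^3_F(6)\subseteq[0,7]\cup\{20\}$ I would invoke the second necessary condition of Lemma \ref{necessary conditions}, $J^3_F(r)\subseteq\{a-r\mid a\in J^3(2r+1),\ a\ge r\}$. By Lemma \ref{3STS}, $J^3(13)=I^3(13)\setminus\{14,15,16,17,18,20\}$; since $I^3(13)=S^3[26]=[0,18]\cup\{20,26\}$, this gives $J^3(13)=[0,13]\cup\{26\}$, and subtracting $r=6$ from its elements $\ge 6$ yields exactly $\{a-6\mid a\in J^3(13),\ a\ge 6\}=[0,7]\cup\{20\}$. Note this is sharper than the generic bound $J^3_F(6)\subseteq I^3_F(6)=[0,12]\cup\{14,20\}$, so it is the binding constraint. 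The two endpoints of the lower bound are then immediate: $0\in J^3_F(6)$ from Lemma \ref{0-in-J^3_F} (as $6\ge 4$ is admissible), and $20=\tfrac{2\cdot 6\cdot 5}{3}\in J^3_F(6)$ from Lemma \ref{2r(r-1)/3-in-J^3_F}.

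The remaining membership claims $1,2,3,4,5,7\in J^3_F(6)$ I would establish by explicit construction, after the standard reduction of deleting the common flower. Fix the point set $\{\infty\}\cup(\{1,\dots,6\}\times\{1,2\})$ and the common flower $\{\{\infty,(i,1),(i,2)\}\mid 1\le i\le 6\}$ at $\infty$; removing these six triples turns each $STS(13)$ into a $3$-GDD of type $2^6$ on $\{1,\dots,6\}\times\{1,2\}$ whose groups $\{(i,1),(i,2)\}$ are forced to agree. Hence $k\in J^3_F(6)$ (flower at $\infty$) is equivalent to the existence of three $3$-GDDs of type $2^6$ on the same groups agreeing in exactly $k$ of their $20$ blocks, with every other block lying in at most one design. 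Equivalently, inside one fixed base $3$-GDD one must display a mutable set of $20-k$ blocks forming a $3$-way trade (three pairwise block-disjoint collections covering the same pairs) so that the union of each with the $k$ retained blocks is again a type-$2^6$ GDD. So for $k\in\{1,2,3,4,5,7\}$ the task becomes exhibiting $3$-way trades of volumes $20-k\in\{19,18,17,16,15,13\}$; for each I would list the $k$ fixed blocks and the two alternative fillings, then check that (a) all three block sets are valid $3$-GDDs of type $2^6$, (b) the pairwise intersection is exactly $k+6$ after reattaching the flower, and (c) the $20-k$ non-shared blocks are distinct \emph{simultaneously} across all three designs.

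The main obstacle is precisely step (c): making two designs differ on a block is trivial, but arranging that all $20-k$ non-coincident blocks are all-different across the three copies while each completion remains a GDD with the prescribed groups is exactly the $3$-way trade requirement, and trades of these volumes must be located by explicit (computer-assisted) search at this small order. This is also the reason the lower bound stops at $[0,5]\cup\{7,20\}$ rather than matching the upper bound: the value $k=6$ corresponds to a trade of volume $14$, for which no such construction is found, leaving $6$ undecided.
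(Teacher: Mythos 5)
Your upper bound argument is correct and coincides with the paper's: from Lemma \ref{3STS} one gets $J^3(13)=[0,13]\cup\{26\}$, and the second necessary condition of Lemma \ref{necessary conditions} then gives $J^3_F(6)\subseteq[0,7]\cup\{20\}$; the endpoints $0$ and $20$ of the lower bound come from Lemmas \ref{0-in-J^3_F} and \ref{2r(r-1)/3-in-J^3_F}, again exactly as in the paper. Your reformulation of the remaining memberships --- three $3$-GDDs of type $2^6$ on common groups agreeing in exactly $k$ blocks, with every non-common block in exactly one design, i.e.\ a $3$-way trade of volume $20-k$ --- is also a correct translation of the definition, and your observation that none of the recursive constructions can reach $r=6$ from an admissible smaller order is right, so a direct treatment is indeed forced.

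The genuine gap is that you never produce the objects your reformulation calls for. For $k\in\{1,2,3,4,5,7\}$ you say you ``would list the $k$ fixed blocks and the two alternative fillings'' and then have a computer check your conditions (a)--(c), but no such lists, designs, or citations to existing designs are given. This is not a routine verification that can be deferred: the existence of $3$-way trades of these particular volumes inside a type-$2^6$ GDD is precisely the question at issue, and it cannot be presumed --- by your own account the identical search at volume $14$ (that is, $k=6$) finds nothing, and Theorem \ref{3way-Latin} illustrates how such $3$-way intersection problems have sporadic holes at small orders. The paper closes exactly this gap with explicit data: for $k=2,3,4$ it cites the $STS(13)$s of Tables $1$-$7$, $1$-$8$ and $1$-$9$ of \cite{3STS} (common flower at $1$), for $k=7$ it cites Table $1$-$11$ (flower at $13$), and for $k=1,5$ it exhibits two explicit $STS(13)$s $\mathcal{A}$, $\mathcal{B}$ together with permutation pairs $(\pi_1,\pi'_1)$ and $(\pi_2,\pi'_2)$ and verifies by computer that $|\mathcal{A}\cap\pi_1(\mathcal{A})\cap\pi'_1(\mathcal{A})|_F=1$ and $|\mathcal{B}\cap\pi_2(\mathcal{A})\cap\pi'_2(\mathcal{A})|_F=5$. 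Until you supply comparable constructions or references, the memberships $1,2,3,4,5,7\in J^3_F(6)$ --- the entire nontrivial content of the lower bound --- remain unproven.
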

\begin{proof}
By Theorem \ref{3STS}, $J^{3}(13)=I^{3}(13)\setminus \{14,15,16,17,18,20\}$, so $J^3_F(6)\subseteq [0,7]\cup \{20\}$. By Lemmas \ref{0-in-J^3_F} and \ref{2r(r-1)/3-in-J^3_F}, $\{0,20\}\subseteq J^3_F(6)$. In \cite{3STS}, the $STS(13)$s which are given in Tables $1-7,~1-8$ and $1-9$ have $3$-way flower intersection numbers $2,~3$ and $4$ respectively with flower at $1$. Also the $STS(13)$s which are given in Table $1-11$  have $3$-way flower intersection number $7$ with flower at $13$. Let  $X=\{1,2,\ldots, 13\} $  and  $(X, \mathcal{A})$ and $(X, \mathcal{B})$ be the following $STS(13)$s.
\begin{center}
\begin{small}
$
\begin{array}{c c c c c c c c c c }
                       & ~1~~~2~11    & ~1~~~3~13 & ~1~~~4~10    & ~1~~~5~~~7 & ~1~~~6~12 & ~1~~~8~~~9 & ~2~~~3~~~6 & ~2~~~4~~~9 & ~2~~~5~12 \\
\mathcal{A}      & ~2~~7~13      & ~2~~~8~10  & ~3~~~4~~~7 & ~3~~~5~10    & ~3~~~8~12 & ~3~~~9~11   & ~4~~~5~~~8 & ~4~~~6~11       & ~4~12~13    \\
                       & ~5~~~6~~~9 & ~5~11~13    &   ~6~~~7~10  & ~6~~~8~13     & ~7~~~8~11 & ~7~~~9~12   & ~9~10~13   & ~10~11~12         &                      \\   
\end{array}
$
\end{small}
\end{center}
\begin{center}
\begin{small}
$
\begin{array}{c c c c c c c c c c c}
                        &~1~~~2~11     & ~1~~~3~13 & ~1~~~4~10    &  ~1~~~5~~~7 & ~1~~~6~12  & ~1~~~8~~~9 & ~2~~~3~~~6  &  ~2~~~4~13   &  ~2~~~5~12 \\
\mathcal{B}       & ~2~~~7~~~9 & ~2~~~8~10 & ~3~~~4~~~7 & ~3~~~5~10     & ~3~~~8~12  & ~3~~~9~11    & ~4~~~5 ~~~8 & ~4~~~6~11    & ~4~~~9~12 \\
                        &~5~~~6~~~9  & ~5~11~13    & ~6~~~7~10    & ~6~~~8~13     & ~7~~~8~11  & ~7~12~13       & ~9~10~13        & ~10~11~12     &                      \\
\end{array}
$
\end{small}
\end{center}
Consider the following permutations on $X$:

$
\begin{array}{l l}
\pi_1=(3~~4~~7)(13~~10~~5), & \pi_2=(13~~8)(12~~4)(2~~11)(3~~9)(6~~10), \\
\pi'_1=(7~~4~~3)(5~~10~~13), & \pi'_2=(12~~11~~9)(2~~8~~6)(4~~10)(5~~7).\\
\end{array}
$

It is checked by computer programming that $|\mathcal{A}\cap \pi_1(\mathcal{A})\cap \pi'_1(\mathcal{A})|_F=1$ and $|\mathcal{B}\cap \pi_2(\mathcal{A})\cap \pi'_2(\mathcal{A})|_F=5$.
\end{proof}

The following construction is similar to construction used in Lemma $4$ of \cite{2FSTS}.
\\Let us call the following Latin square, $L$, of order $8$ on symbols $0$, $1$, \ldots , $7$, \emph{special} of order $8$, where $A$ is a Latin square of order $4$ on symbols $4$, $5$, $6$ and $7$.
\begin{small}
\begin{center}$L=$
\begin{tabular}{|c c | c c | c c | c c|}
\hline
$0$         & $1$         & $2$         & $3$          & \multicolumn{4}{c|}{\multirow{4}{*}{$A$}}\\
$1$         & $0$         & $3$         & $2$          & \multicolumn{4}{c|}{}                              \\ \cline{1-4}
$2$         & $3$         & $0$         & $1$          & \multicolumn{4}{c|}{}                              \\
$3$         & $2$         & $1$         & $0$          & \multicolumn{4}{c|}{}                              \\ \hline
\multicolumn{4}{|c|}{\multirow{4}{*}{$A^T$}} & $0$         & $1$         & $2$         & $3$    \\
\multicolumn{4}{|c|}{}                                   & $1$         & $0$         & $3$         & $2$    \\ \cline{5-8}
\multicolumn{4}{|c|}{}                                   & $2$         & $3$         & $0$         & $1$    \\
\multicolumn{4}{|c|}{}                                   & $3$         & $2$         & $1$         & $0$    \\ \hline
\end{tabular}
\end{center}
\end{small}
Denote by $K$ the set of integers $k$ for which there exists a collection of three special Latin squares of order $8$ which pairwise agree in exactly $k$ of the $24$ cells above the $2\times 2$ diagonal blocks.
\begin{lemma}{\label{Latin-8-Intersection}}
$\{8,9,12\}\subseteq K$.
\end{lemma}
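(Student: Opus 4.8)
The plan is to exhibit three special Latin squares of order $8$ that pairwise agree in exactly $k$ cells above the two $2\times 2$ diagonal blocks, for each $k\in\{8,9,12\}$, thereby proving each value lies in $K$. By the definition of a special Latin square, the $2\times 2$ diagonal blocks (the four cells in rows $1,2$/columns $1,2$ and the four cells in rows $3,4$/columns $3,4$, together with their symmetric counterparts in the lower-right $4\times 4$ corner) are fixed by the given template, so all the freedom resides in the sub-Latin-square $A$ of order $4$ on symbols $\{4,5,6,7\}$ occupying the upper-right $4\times 4$ block, with $A^{T}$ determined in the lower-left block. The $24$ cells ``above the $2\times 2$ diagonal blocks'' in which agreement is counted are precisely the cells of this $A$-block (the $16$ cells of $A$) together with the $8$ off-diagonal cells of the two $2\times 2$ blocks that sit strictly above the main diagonal; since those $8$ cells are common to all special squares by construction, they contribute a fixed baseline of $8$ agreements, and the remaining variation comes entirely from how three choices of the order-$4$ square $A$ agree with one another.

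First I would make this counting explicit: if $A',A'',A'''$ are the three order-$4$ Latin squares on $\{4,5,6,7\}$ used, and they pairwise agree in exactly $m$ of their $16$ cells, then the three special squares pairwise agree in exactly $m+8$ of the $24$ cells in question (the $8$ forced agreements plus the $m$ agreements inside the $A$-block). Hence the target values $k\in\{8,9,12\}$ correspond exactly to finding three order-$4$ Latin squares on a $4$-symbol set with pairwise $3$-way intersection number $m\in\{0,1,4\}$. This reduces the lemma to the $3$-way intersection problem for Latin squares of order $4$, which is already settled: by Lemma~\ref{3way-Latin}, $J'^3(4)=\{0,1,4,16\}$, so each of $m=0,1,4$ is realizable.

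Concretely, then, I would select for each $m\in\{0,1,4\}$ an explicit triple of order-$4$ Latin squares witnessing that value (for $m=4$, three squares agreeing in four cells; for $m=1$, three squares agreeing in a single cell; for $m=0$, three squares pairwise disjoint as entries, whose existence is guaranteed since $0\in J'^3(4)$), substitute each into the $A$-slot of the template $L$ to build the three special squares, and record that the resulting pairwise agreement count above the diagonal blocks is $m+8\in\{8,9,12\}$. I expect the only genuine work to be writing down or verifying the three small arrays for $m=1$ and $m=0$, since these require the three $4\times 4$ squares to be mutually in general position on the prescribed number of cells while remaining valid Latin squares on the four symbols.

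The main obstacle, such as it is, lies in the $m=0$ case: producing three order-$4$ Latin squares that are pairwise entirely disagreeing (no cell shared by even two of them) takes a little care, because order $4$ is small and the Latin constraints are tight; but this is exactly the content of $0\in J'^3(4)$ from Lemma~\ref{3way-Latin}, so a suitable triple is known to exist and can be displayed directly. Everything else is a routine substitution into the template $L$ together with the bookkeeping that the $8$ structurally forced cells above the diagonal blocks are shared by all three special squares, which follows immediately from the fact that $L$ prescribes those cells identically regardless of the choice of $A$.
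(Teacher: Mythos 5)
Your proposal is correct and takes essentially the same route as the paper: both split the $24$ cells into the $8$ template-fixed cells containing entries $2$ and $3$ (a forced baseline of $8$ agreements) plus the $16$ cells of the $A$-block, and then invoke $\{0,1,4\}\subseteq J'^3(4)$ from the $3$-way Latin square intersection result to realize $k=m+8\in\{8,9,12\}$. The paper does not display explicit order-$4$ triples either; it simply cites that theorem, exactly as you do.
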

\begin{proof}
Let $k\in K$, write $k=a+b$ where $a\in J'^3(4)$ and $b=8$ (the number of cells which contain elements $2$ and $3$). By Theorem \ref{3way-Latin}, $\{0,1,4\}\subseteq J'^3(4)$, so $\{8,9,12\}\subseteq K$. 
\end{proof}
\begin{lemma}{\label{STS(15)12,13,16}}
$\{12,13,16\}\subseteq J^3_F(7)$.
\end{lemma}
\begin{proof}
Let $K$ be the set of integers $k$ for which there exists a collection of three special Latin squares of order $8$, $L^{(n)}_{ij}$ for $n=1,2,3$, which pairwise agree in exactly $k$ of the $24$ cells above the $2\times 2$ diagonal blocks. We construct a collection of three $STS(15)$s with $3$-way flower intersection number $h$ for $h\in \{12,13,16\}$. Write $h=\ell+k$ where $\ell\in J^3_F(3)$ and $k\in K$. Let $X_1=\{\infty_i:~1\leq i\leq 7\}$, $X_2=\{1,2,\ldots ,8\}$ and $(X_1,\mathcal{B}_n)$ for $n=1,2,3$ be three $STS(7)$s with $3$-way intersection number $\ell +3$ where three of these will constitute the flower at $\infty_1$. It is not hard to check that $(X_1\cup X_2,\mathcal{B}_n\cup \mathcal{C}_n)$ for $n=1,2,3$ are three $STS(15)$s, where $\mathcal{C}_n=\{\{\infty_s,i,j\}|~\ell^{(n)}_{ij}=s,~1\leq i < j\leq 8,~1\leq s\leq 7\}$ which have $3$-way intersection number $\ell +k+7$, seven of these will constitute the flower at $\infty_1$. So $h=\ell +k\in J^3_F(7)$. By Theorem \ref{J^3_F(3)}, $\ell =4$ and by Lemma \ref{Latin-8-Intersection}, $\{8,9,12\}\subseteq K$. So $\{12,13,16\}\subseteq J^3_F(7)$.
\end{proof}
\begin{theorem}{\label{J^3_F(7)}}
$[0,8]\cup [10,13]\cup \{16,22,28\}\subseteq J^3_F(7)\subseteq [0,16]\cup \{22,28\}$.
\end{theorem}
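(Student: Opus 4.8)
The plan is to prove the two inclusions separately. For the upper bound I would use the middle necessary condition of Lemma \ref{necessary conditions}, namely $J^3_F(7)\subseteq\{a-7\mid a\in J^3(15),\ a\geq 7\}$, which here is sharper than $J^3_F(7)\subseteq I^3_F(7)$. By Theorem \ref{3STS} we have $J^3(15)=I^3(15)\setminus\{24,25,26,27\}=[0,23]\cup\{29,35\}$, so translating the part with $a\geq 7$ by $-7$ gives exactly $[0,16]\cup\{22,28\}$. This yields $J^3_F(7)\subseteq[0,16]\cup\{22,28\}$, the claimed upper bound.

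For the lower bound the strategy is to harvest as many values as possible from the lemmas and recursive constructions already established, and then dispatch the few leftovers by direct construction. From the general results: $0\in J^3_F(7)$ by Lemma \ref{0-in-J^3_F} (as $7\geq 4$); $28=\frac{2\cdot 7\cdot 6}{3}\in J^3_F(7)$ by Lemma \ref{2r(r-1)/3-in-J^3_F}; and $22\in J^3_F(7)$ by Lemma \ref{lem-k>(2r(r-3)/3)}, since $22\geq\frac{2\cdot 7\cdot 4}{3}$ and $22+7=29\in J^3(15)$. Lemma \ref{STS(15)12,13,16} supplies $\{12,13,16\}$, and applying Theorem \ref{STS(2v+1) Construction} with $r=3$, $k=4\in J^3_F(3)$ and $s\in\{1,2,3,4,7\}$ produces $4+4(s-1)$, that is $\{4,8,12,16,28\}\subseteq J^3_F(7)$. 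Collecting these, the values $\{0,4,8,12,13,16,22,28\}$ are already covered.

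It then remains to realize the eight scattered values $\{1,2,3,5,6,7,10,11\}$. These cannot be reached by the available recursive machinery: since $J^3_F(3)=\{4\}$, the increments in Theorem \ref{STS(2v+1) Construction} are forced to be multiples of $4$, and the special Latin square construction of Lemma \ref{STS(15)12,13,16} only yields $4+\bigl(8+J'^3(4)\bigr)=\{12,13,16,28\}$. I would therefore handle them directly, exactly as in Theorem \ref{J^3_F(6)} for order $13$: exhibit one (or two) explicit base systems $(X,\mathcal{A})$ of order $15$, and for each target value $h$ two permutations $\pi_1^{(h)},\pi_2^{(h)}$ of $X$ fixing a chosen flower point, so that $(X,\mathcal{A})$, $(X,\pi_1^{(h)}\mathcal{A})$, $(X,\pi_2^{(h)}\mathcal{A})$ pairwise meet in $h+7$ triples, the $7$ flower triples being common; the agreement counts would be confirmed by computer.

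The main obstacle I expect is precisely this last step. The defining constraint of the $3$-way problem is that off the common part all three systems must differ on every triple (not merely pairwise agree), which makes the small counts $1,2,3,5,6,7$ delicate to hit while simultaneously keeping a fixed $7$-triple flower intact under both permutations. A short computer search over permutations fixing the flower point is the natural tool here, and once these eight base-plus-permutation configurations are found, the theorem follows by combining them with the values already assembled above.
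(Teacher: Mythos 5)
Your proposal is correct and its skeleton matches the paper's proof: the identical upper bound via Lemma \ref{necessary conditions} and Theorem \ref{3STS} (computing $J^3(15)=[0,23]\cup\{29,35\}$ and translating by $-7$), and the same harvest of $\{0,12,13,16,22,28\}$ from Lemmas \ref{0-in-J^3_F}, \ref{2r(r-1)/3-in-J^3_F}, \ref{lem-k>(2r(r-3)/3)} and \ref{STS(15)12,13,16}. Where you genuinely diverge is the next layer. Your application of Theorem \ref{STS(2v+1) Construction} with $r=3$, $k=4\in J^3_F(3)$ and $s\in\{1,2,3,4,7\}$, giving $\{4,8,12,16,28\}\subseteq J^3_F(7)$, is valid (the three identical $STS(7)$s are a legitimate input, since $J^3_F(3)=\{4\}$ forces all seven triples to be common) and the paper does not use this device at this order; it buys you $4$ and $8$ without any search. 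The paper instead obtains $5,8,10,11$ by re-examining designs already in the literature --- the $KTS(15)$s in Lemma 2 of \cite{3KTS} with intersection numbers $15$ and $17$ have $3$-way flower intersection numbers $8$ and $10$, and the $STS(15)$s in Lemma 3-4 of \cite{3STS} with intersection numbers $12$ and $18$ have flower intersection numbers $5$ and $11$ --- so it needs explicit computer-checked configurations only for $\{1,2,3,4,6,7\}$, whereas your plan defers the larger set $\{1,2,3,5,6,7,10,11\}$ to a search. The method you propose for that residue (explicit base $STS(15)$s together with pairs of permutations fixing the flower point, pairwise agreement verified by computer) is exactly the paper's method: its systems $\mathcal{A},\mathcal{B},\mathcal{C},\mathcal{D},\mathcal{E}$ and permutation pairs $\pi_i,\pi'_i$ realize $1,2,3,4,6,7$ with flower at $1$, confirming that the search you defer would succeed. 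The only respect in which your write-up falls short of a proof is that this search data is not exhibited; but that step is irreducibly computational in the paper as well, so this is a difference of completeness, not of approach.
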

\begin{proof}
By Theorem \ref{3STS}, $J^{3}(15)=I^{3}(15)\setminus \{24,25,26,27\}$, so $J^3_F(7)\subseteq [0,16]\cup \{22,28\}$. By Lemmas \ref{0-in-J^3_F}, \ref{2r(r-1)/3-in-J^3_F}, \ref{lem-k>(2r(r-3)/3)} and \ref{STS(15)12,13,16}, $\{0,12,13,16,22,28\}\subseteq J^3_F(7)$. In Lemma $2$ of \cite{3KTS}, the three $KTS(15)$s which have intersection numbers $15$ and $17$ actually have $3$-way flower intersection numbers $8$ and $10$ respectively with a common flower at element $1$. Also in Lemma $3-4$ of \cite{3STS}, the three $STS(15)$s which have intersection numbers $12$ and $18$ actually have $3$-way flower intersection numbers $5$ and $11$ respectively with a common flower at element $4$ (Tables $2-1$, $2-4$ and $2-5$). Now let $X=\{1,2,\ldots, 15\} $  and  $(X, \mathcal{A})$, $(X, \mathcal{B})$, $(X, \mathcal{C})$, $(X, \mathcal{D})$ and $(X, \mathcal{E})$ be the following $STS(15)$s.
\begin{small}
\begin{center}
$
\begin{array}{c c c c c c c c}
& ~1~~~2~~~3  & ~1~~~4~~~5 &  ~1~~~6~~~7 &  ~1~~~8~~~9 &  ~1~10~11      &  ~1~12~13      &  ~1~14~15\\
& ~4~10~14        & ~2~~~9~11    & ~2~~~8~10     & ~2~13~15        & ~2~12~14       & ~2~~~4~~~6 & ~2~~~5~~~ 7\\
\mathcal{A}
& ~5 ~~~8 ~13   & ~3~~~8~12    & ~3~~~9~14     & ~3~~~4~~~7  & ~3~~~5~~~6 & ~3~11~15       & ~3~10~13\\
& ~6~~~9~15     & ~6~13~14       & ~4~11~13        & ~5~11~14        & ~4~~~8~15    & ~5~~~9~10    & ~4~~~9~12\\
& ~7~11~12        & ~7~10~15       & ~5~12~15        & ~6~10~12        & ~7~~~9~13    & ~7~~~8~14    & ~6~~~8~11\\
\end{array}
$

$
\begin{array}{c c c c c c c c}
& ~1~~~2~~~3  & ~1~~~4~~~5 &  ~1~~~6~~~7 &  ~1~~~8~~~9 &  ~1~10~11      &  ~1~12~13      &  ~1~14~15\\
& ~4~10~15        & ~2~~~9~11    & ~2~~~8~10     & ~2~13~15        & ~2~12~14       & ~2~~~4~~~6 & ~2~~~5~~~ 7\\
\mathcal{B}
& ~5 ~~~9 ~13   & ~3~~~8~11    & ~3~~~9~10     & ~3~~~4~~~7  & ~3~~~5~~~6 & ~3~13~14       & ~3~12~15\\
& ~6~~~9~15     & ~6~~~8~13    & ~4~11~13        & ~5~11~14        & ~4~~~8~12    & ~5~10~12    & ~4~~~9~14\\
& ~7~11~15        & ~7~10~13       & ~5~~~8~15        & ~6~10~14        & ~7~~~9~12    & ~7~~~8~14    & ~6~11~12\\
\end{array}
$

$
\begin{array}{c c c c c c c c}
& ~1~~~2~~~3  & ~1~~~4~~~5 &  ~1~~~6~~~7 &  ~1~~~8~~~9 &  ~1~10~11      &  ~1~12~13      &  ~1~14~15\\
& ~4~~~8~15     & ~2~~~9~11    & ~2~~~8~10     & ~2~13~15        & ~2~12~14       & ~2~~~4~~~6 & ~2~~~5~~~ 7\\
\mathcal{C}
& ~5 ~~~9 ~13   & ~3~~~8~12    & ~3~~~9~14     & ~3~~~4~~~7  & ~3~~~5~~~6 & ~3~11~13       & ~3~10~15\\
& ~6~~~9~12     & ~6~~~8~14    & ~4~13~14        & ~5~10~14        & ~4~11~12    & ~5~12~15    & ~4~~~9~10\\
& ~7~11~14        & ~7~10~12       & ~5~~~8~11        & ~6~10~13        & ~7~~~9~15    & ~7~~~8~13    & ~6~11~15\\
\end{array}
$

$
\begin{array}{c c c c c c c c}
& ~1~~~2~~~3  & ~1~~~4~~~5 &  ~1~~~6~~~7 &  ~1~~~8~~~9 &  ~1~10~11      &  ~1~12~13      &  ~1~14~15\\
& ~4~~~8~13     & ~2~~~9~11    & ~2~~~8~10     & ~2~13~15        & ~2~12~14       & ~2~~~4~~~6 & ~2~~~5~~~ 7\\
\mathcal{D}
& ~5 ~~~9 ~14   & ~3~~~8~11    & ~3~~~9~12     & ~3~~~4~~~7  & ~3~~~5~~~6 & ~3~13~14       & ~3~10~15\\
& ~6~~~9~15     & ~6~~~8~14    & ~4~11~14        & ~5~11~13        & ~4~12~15    & ~5~10~12    & ~4~~~9~10\\
& ~7~11~15        & ~7~10~14       & ~5~~~8~15        & ~6~10~13        & ~7~~~9~13    & ~7~~~8~12    & ~6~11~12\\
\end{array}
$

$
\begin{array}{c c c c c c c c}
& ~1~~~2~~~3  & ~1~~~4~~~5 &  ~1~~~6~~~7 &  ~1~~~8~~~9 &  ~1~10~11      &  ~1~12~13      &  ~1~14~15\\
& ~4~10~14        & ~2~~~9~11    & ~2~~~8~10     & ~2~13~15        & ~2~12~14       & ~2~~~4~~~6 & ~2~~~5~~~ 7\\
\mathcal{E}
& ~5 ~~~8 ~13   & ~3~~~8~11    & ~3~~~9~10     & ~3~~~4~~~7  & ~3~~~5~~~6 & ~3~12~15       & ~3~13~14\\
& ~6~~~9~15     & ~6~11~13       & ~4~~~9~13        & ~5~11~14        & ~4~~~8~12    & ~5~~~9~12    & ~4~~~9~13\\
& ~7~11~12        & ~7~10~13       & ~5~10~15        & ~6~10~12        & ~7~~~9~14    & ~7~~~8~15    & ~6~~~8~14\\
  
\end{array}
$
\end{center}
\end{small}
Consider the following permutations on $X$:
\begin{small}
\\$
\begin{array}{l l}
 \pi_1=(5~~13~~3~~14~~4~~12~~2~~15)(6~~10)(7~~11), & \pi'_1=(2~~10)(3~~11)(4~~8~~14)(5~~9~~15)(6~~12)(7~~13), \\
 \pi_2=(4~~10~~14~~5~~11~~15), & \pi'_2=(15~~11~~5~~14~~10~~4),\\
 \pi_3=(2~~10~~15~~13~~9~~7~~3~~11~~14~~12~~8~~6), & \pi'_3=(2~~6)(3~~7)(4~~8)(5~~9)(10~~15~~13~~11~~14~~12),\\
 \pi_4=(2~~4~~6)(3~~5~~7), & \pi'_4=(6~~4~~2)(7~~5~~3),\\
 \pi_5=(2~~12~~4~~14~~6)(3~~13~~5~~15~~7), & \pi'_5=(15~~6~~2)(3~~14~~7)(4~~13)(5~~12)(8~~10)(9~~11), \\ 
\pi_6=(15~~13~~3)(2~~14~~12)(4~~10~~6)(5~~11~~7), & \pi'_6=(15~~3~~13)(2~~12~~14)(4~~6~~10)(7~~11~~5).\\
\end{array}
$
\end{small}

It is checked by computer programming that

$
\begin{array}{l c l}
|\mathcal{B}\cap \pi_1(\mathcal{D})\cap \pi'_1(\mathcal{D})|_F=1, & |\mathcal{B}\cap \pi_2(\mathcal{B})\cap \pi'_2(\mathcal{B})|_F=2, & |\mathcal{B}\cap \pi_3(\mathcal{D})\cap \pi'_3(\mathcal{D})|_F=3, \\
 |\mathcal{E}\cap \pi_4(\mathcal{E})\cap \pi'_4(\mathcal{E})|_F=4, &  |\mathcal{B}\cap \pi_5(\mathcal{C})\cap \pi'_5(\mathcal{C})|_F=6, & |\mathcal{A}\cap \pi_6(\mathcal{A})\cap \pi'_6(\mathcal{A})|_F=7.\\
\end{array}
$

with flower at $1$.
\end{proof}
\begin{theorem}{\label{J^3_F(9)}}
$I^3(9)\setminus \{4,6,7,9,11,34,35\}\subseteq J^3_F(9)$.
\end{theorem}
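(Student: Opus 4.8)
The plan is to realize every integer in the target set as the $3$-way flower intersection number of some triple of $STS(19)$s. Since $\frac{2\cdot 9\cdot 8}{3}=48$, the definition of $S^3$ gives $I^3_F(9)=[0,40]\cup\{42,48\}$, so the set to be covered is
$$I^3_F(9)\setminus\{4,6,7,9,11,34,35\}=\{0,1,2,3,5,8,10\}\cup[12,33]\cup\{36,37,38,39,40,42,48\}.$$
I would build this set from four sources: the asymptotic corollary for large $k$, two of the recursive constructions already proved, and a short list of explicit computer-verified systems for the values that no construction reaches.

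First I would dispose of the large values with Corollary \ref{cor-k>(2r(r-3)/3)}. For $r=9$ we have $\frac{2r(r-3)}{3}=36$, so every $k\in I^3_F(9)$ with $k\geq 36$, namely $\{36,37,38,39,40,42,48\}$, lies in $J^3_F(9)$ at once, covering the entire top of the target. Next I would apply Theorem \ref{STS(3v+1) Construction} with $r=3$: since $J^3_F(3)=\{4\}$ by Theorem \ref{J^3_F(3)}, choosing $a_1=a_2=a_3=4$ gives $a_1+a_2+a_3=12$, and taking $b\in J'^3(6)=[0,19]\cup\{27,36\}$ from Theorem \ref{3way-Latin} yields $12+b\in J^3_F(9)$, that is, the whole interval $[12,31]$ together with $39$ and $48$.

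Then I would use Theorem \ref{STS(2v+1) Construction} with $r=4$. From $J^3_F(4)=\{0,8\}$ (Theorem \ref{J^3_F(4)}) and the admissible multipliers $s-1\in\{0,1,2,3,4,5,8\}$, so $(s-1)(r+1)\in\{0,5,10,15,20,25,40\}$, I obtain $\{0,5,10,15,20,25,40\}$ from $k=0$ and $\{8,13,18,23,28,33,48\}$ from $k=8$; and $0\in J^3_F(9)$ also follows from Lemma \ref{0-in-J^3_F}. Taking the union of these three sources gives $\{0,5,8,10\}\cup[12,31]\cup\{33,36,37,38,39,40,42,48\}$, so the only members of the target set still missing are $1$, $2$, $3$, and $32$.

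The main obstacle is precisely this last step. None of the recursive machinery reaches these four values: Theorem \ref{STS(3v+1) Construction} would need $b=20\notin J'^3(6)$ to produce $32$, while Theorem \ref{STS(2v+1) Construction} only outputs integers of the form $5m$ or $5m+8$, and $32$ is of neither form; the values $1,2,3$ are likewise unreachable since $J^3_F(3)$ forces the smallest construction output to be $12$. Hence for each of $1,2,3,32$ I would exhibit three explicit $STS(19)$s, written as triples on $\{1,\dots,19\}$, together with permutations, and verify by computer — exactly as was done for the $STS(13)$ and $STS(15)$ cases in Theorems \ref{J^3_F(6)} and \ref{J^3_F(7)} — that they share a common flower of $9$ triples and attain flower intersection numbers $1$, $2$, $3$, and $32$. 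This direct search, and its computer verification, is where all the genuine work lies.
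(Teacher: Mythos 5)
Your proposal is correct and follows essentially the same route as the paper: the same three sources (Corollary~\ref{cor-k>(2r(r-3)/3)} for $k\geq 36$, Theorem~\ref{STS(3v+1) Construction} with $r=3$ giving $[12,31]\cup\{39,48\}$, Theorem~\ref{STS(2v+1) Construction} with $r=4$ giving the values $5m$ and $5m+8$, plus Lemma~\ref{0-in-J^3_F} for $0$), leading to the same residual set $\{1,2,3,32\}$. The paper finishes exactly as you propose: $32$ is obtained from the $STS(19)$s in Table $3$-$2$ of \cite{3STS}, which happen to share a common flower at $19$, and $1,2,3$ are obtained from one explicit $STS(19)$ together with pairs of permutations, verified by computer.
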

\begin{proof}
By Theorem \ref{STS(3v+1) Construction} for $r=3$, and Theorems \ref{3way-Latin} and \ref{J^3_F(3)}, $[12,31]\cup \{39,48\}\subseteq J^3_F(9)$. By Theorem \ref{STS(2v+1) Construction} for $r=4$, and Theorem \ref{J^3_F(4)}, $\{5,8,10,33\}\subseteq J^3_F(9)$. By Lemma \ref{0-in-J^3_F}, $0\in J^3_F(9)$  and by Corollary \ref{cor-k>(2r(r-3)/3)}, $\{36,37,38,40,42\}\subseteq J^3_F(9)$. In \cite{3STS}, the $STS(19)$s which are given in Table $3-2$ have $3$-way flower intersection number $32$ with flower at $19$. Let $X=\{1,2,\ldots,19\}$ and $(X,\mathcal{A})$ be the following $STS(19)$.
\begin{center}
\begin{small}
$
\begin{array}{c c c c c c c c c c}
~1~~2~~3 & ~2~~9~14 & ~3~~8~12 & ~4~~8~19 & ~7~~8~10 & ~1~18~19 & ~4~13~16 & ~6~14~19 & ~9 ~10 ~18 & 10~17~19  \\
~1~~4~~5 & ~2~~5~15  & ~3~~9~19 & ~5~~6~10 & ~7~~~9~13 & ~2~11~13 & ~4~15~17 & ~7~15~19 & ~9~11~15 &  11~14~17  \\
~1~~6~~7 & ~2~~6~18  & ~3~~5~17 & ~5~~7~14 & ~1~10~11 & ~2~12~19 & ~5~11~12 & ~7~17~18 & ~~9~12~17 &  11~16~19  \\
~1~~8~~9 & ~2~~7~16  & ~3~~6~15 & ~5~~8~18 & ~1~12~13 & ~3~10~13 & ~5~13~19 & ~8~13~14 & 10~12~15 & 12~14~18  \\
~4~~6~~9 & ~2~~8~17  & ~3~~7~11 & ~5~~9~16 & ~1~14~15 & ~3~16~18 & ~6~12~16 & ~8~15~16 & 10~14~16 & 13~15~18  \\ 
~2~~4~10  & ~3~~4~14 & ~4~~7~12 & ~6~~8~11 & ~1~16~17 & ~4~11~18 & ~6~13~17 &&&\\
\end{array}
$
\end{small}
\end{center}
Consider the following permutations on $X$:
\\
\begin{small}
$\pi_1=(4~~16)(5~~17)(10~~8~~12~~6~~14~~2~~18)(11~~9~~13~~7~~15~~3~~19), \\
\pi'_1=(2~~10)(3~~11)(12~~14)(13~~15)(17~~7~~9~~5~~18~~16~~6~~8~~4~~19), \\
\pi_2=(6~~14)(7~~15)(8~~12~~4~~16~~2~~18)(9~~13~~5~~17~~3~~19), \\ \pi'_2=(14~~6~~4~~2~~10~~16~~8~~19~~15~~7~~5~~3~~11~~17~~9~~18), \\
\pi_3=(2~~8~~10~~18~~4~~12~~6)(3~~9~~11~~19~~5~~13~~7), \\
\pi'_3=(2~~6~~4~~14~~9~~13~~11)(3~~7~~5~~15~~8~~12~~10)(16~~19~~17~~18).\\
$
\end{small}
It is checked by computer programming that $|\mathcal{A}\cap \pi_i(\mathcal{A})\cap \pi'_i(\mathcal{A})|_F=i$ for $i=1,2,3$ with flower at $1$.
\end{proof}
\begin{theorem}{\label{J^3_F(10)}}
$J^3_F(10)=I^3_F(10)$.
\end{theorem}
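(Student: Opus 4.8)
The plan is to prove $I^3_F(10)\subseteq J^3_F(10)$, the reverse inclusion being immediate from Lemma \ref{necessary conditions}. Since $\frac{2\cdot 10\cdot 9}{3}=60$, we have $I^3_F(10)=S^3[60]=[0,52]\cup\{54,60\}$, so the task is to realise each of these integers as the $3$-way flower intersection number of a suitable triple of $STS(21)$s with a common flower at a point.

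First I would settle the top of the range. Because $\frac{2\cdot 10\cdot 7}{3}=\frac{140}{3}<47$ and $10\ge 9$, Corollary \ref{cor-k>(2r(r-3)/3)} yields every $k\in I^3_F(10)$ with $k\ge 47$, namely $\{47,48,49,50,51,52,54,60\}$. For the bulk of $[0,52]$ I would apply Theorem \ref{STS(3v+3) Construction} with $r=3$, which assembles three $STS(21)$s sharing a flower from three $STS(9)$s and three Latin squares of order $6$. Taking $a_i\in J^3_F(4)=\{0,8\}$ (Theorem \ref{J^3_F(4)}) and $b\in J'^3(6)=[0,19]\cup\{27,36\}$ (Theorem \ref{3way-Latin}), the quantity $a_1+a_2+a_3+b$ runs over $\{0,8,16,24\}+\left([0,19]\cup\{27,36\}\right)=[0,43]\cup\{44,51,52,60\}$. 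Together with the previous step this already gives $[0,44]\cup\{47,48,49,50,51,52,54,60\}\subseteq J^3_F(10)$ (and $0\in J^3_F(10)$ is in any case guaranteed by Lemma \ref{0-in-J^3_F}), leaving only $45$ and $46$. The value $45$ I would obtain from Theorem \ref{STS(3v) Construction} with $r=3$: with $a_1=1$ and $a_2=7$ in $J^3(7)=\{1,7\}$ (Theorem \ref{3STS}), $a=4\in J^3_F(3)$ (Theorem \ref{J^3_F(3)}), and $b=33$ arising from three order-$7$ Latin squares with a constant secondary diagonal agreeing in $40\in J''^3(7)$ cells — a value certified by the sumset of Theorem \ref{Latin J''^3(2n+1)} — one gets $1+7+4+33=45$.

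This leaves $46$ as the sole remaining integer, and it is the crux of the theorem. It falls just below the threshold $\frac{140}{3}\approx 46.67$ of Corollary \ref{cor-k>(2r(r-3)/3)}, so that route is closed; and it escapes the clean recursions, since producing it through Theorem \ref{STS(3v+3) Construction} would require $b\in\{22,30,38,46\}\not\subseteq J'^3(6)$, while producing it through Theorem \ref{STS(3v) Construction} with $a_1+a_2+a\in\{6,12,18\}$ would require a total agreement in $\{35,41,47\}$ cells of $J''^3(7)$, none of which the sumset bound of Theorem \ref{Latin J''^3(2n+1)} supplies. The main obstacle is therefore to exhibit $46$. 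The most direct route is an explicit, computer-verified triple of $STS(21)$s sharing a flower of $10$ triples together with $46$ further common triples — the device already used for the sporadic values in Theorems \ref{J^3_F(7)} and \ref{J^3_F(9)}. Alternatively, a direct construction of three order-$7$ Latin squares with a fixed secondary diagonal showing $35\in J''^3(7)$ would let Theorem \ref{STS(3v) Construction} (with $a_1=a_2=7$, $a=4$, $b=28$) deliver $46$; since $35$ corresponds to a comfortable $14$ off-diagonal disagreements, far from the extremal regime, I would expect such a construction to exist, but establishing it (or the explicit $STS(21)$ example) is where essentially all the remaining work lies.
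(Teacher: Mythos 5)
Your treatment of every value except $46$ coincides with the paper's own proof: Corollary \ref{cor-k>(2r(r-3)/3)} for $[47,52]\cup\{54,60\}$; Theorem \ref{STS(3v+3) Construction} with $r=3$, $a_i\in J^3_F(4)=\{0,8\}$ and $b\in J'^3(6)$ for $[0,44]$; and Theorem \ref{STS(3v) Construction} with $a_1=1$, $a_2=7$, $a=4$, $b=33$ (i.e.\ $40\in J''^3(7)$ from the sumset of Theorem \ref{Latin J''^3(2n+1)}) for $45$. You also correctly identified that $46$ is unreachable by those recursions. But that is where your argument stops: "an explicit, computer-verified triple of $STS(21)$s" that you do not exhibit, and "I would expect such a construction to exist," are not proofs, so the value $46$ --- which you yourself call the crux --- is a genuine gap. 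The paper closes it with no new computation: it observes that the three systems constructed in Lemma $2.8$ of \cite{3STS} with $3$-way intersection number $56$ in fact share a common flower, hence have $3$-way flower intersection number $56-10=46$.

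Worse, your proposed fallback is provably a dead end, not merely unestablished. Any three Latin squares counted by $J''^3(7)$ are in particular three Latin squares of order $7$ having exactly that many common cells and pairwise different entries elsewhere, so $J''^3(7)\subseteq J'^3(7)=[0,30]\cup\{37,40,49\}$ by Theorem \ref{3way-Latin}; hence $35\notin J''^3(7)$, and the order-$7$ squares you "expect to exist" cannot exist, however comfortable $14$ off-diagonal disagreements may look. (The same observation kills all three candidates $35,41,47$, so $46$ is unreachable via Theorem \ref{STS(3v) Construction} with $r=3$ outright, not just beyond the reach of the sumset bound.) The only viable routes to $46$ are an explicit example or the literature citation the paper uses, and your proposal supplies neither.
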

\begin{proof}
By Corollary \ref{cor-k>(2r(r-3)/3)}, $[47,52]\cup \{54,60\}\subseteq J^3_F(10)$. By Theorem \ref{STS(3v+3) Construction} for $r=3$, and Theorems \ref{3way-Latin} and \ref{J^3_F(4)}, $[0,44]\subseteq J^3_F(10)$ and in Theorem \ref{STS(3v) Construction} for $r=3$, with the aim of Theorems \ref{3STS}, \ref{Latin J''^3(2n+1)} and \ref{J^3_F(3)}, we can assume that $b=33$, $a_1=1$, $a_2=7$ and $a=4$, so $45\in J^3_F(10)$. In Lemma $2.8$ of \cite{3STS}, the three systems with $3$-way intersection number $56$, actually have $3$-way flower intersection number $46$, so $46\in J^3_F(10)$.
\end{proof}

Let us call the following Latin square, $L$, of order $16$ on symbols $0$, $1$, \ldots, $15$, \emph{special} of order $16$, where $A$ and $C$ are Latin squares of order $4$ on symbol set $\{4, 5, 6 ,7\}$ and $B$ is a Latin square of order $8$ on symbol set $\{8, 9, \ldots , 15\}$.
\begin{footnotesize}
\begin{center}$L=$
\begin{tabular}{|c c | c c | c c | c c | c c | c c | c c | c c|}
\hline
\multicolumn{1}{|c}{$0$} & $1$  & $2$ & $3$ & \multicolumn{4}{c|}{\multirow{4}{*}{$A$}}& \multicolumn{8}{c|}{\multirow{8}{*}{$B$}}                                            \\
\multicolumn{1}{|c}{$1$} & $0$  & $3$ & $2$  & \multicolumn{4}{c|}{}                 & \multicolumn{8}{c|}{}                                                              \\ \cline{1-4}
\multicolumn{1}{|c}{$2$} & $3$  & $0$ & $1$ & \multicolumn{4}{c|}{}                 & \multicolumn{8}{c|}{}                                                              \\
\multicolumn{1}{|c}{$3$} & $2$  & $1$ & $0$ & \multicolumn{4}{c|}{}                 & \multicolumn{8}{c|}{}                                                              \\ \cline{1-8}
\multicolumn{4}{|c|}{\multirow{4}{*}{$A^T$}}& $0$     & $1$     & $2$    & $3$      & \multicolumn{8}{c|}{}                                                              \\
\multicolumn{4}{|c|}{}                      & $1$     & $0$     & $3$    & $2$      & \multicolumn{8}{c|}{}                                                              \\ \cline{5-8}
\multicolumn{4}{|c|}{}                      & $2$     & $3$     & $0$    & $1$      & \multicolumn{8}{c|}{}                                                              \\
\multicolumn{4}{|c|}{}                      & $3$     & $2$     & $1$    & $0$      & \multicolumn{8}{c|}{}                                                              \\ \hline
\multicolumn{8}{|c|}{\multirow{8}{*}{$B^T$}}                                        &\multicolumn{1}{c}{$0$} & $1$ & $2$ & $3$ & \multicolumn{4}{c|}{\multirow{4}{*}{$C$}}\\
\multicolumn{8}{|c|}{}                                                              & \multicolumn{1}{c}{$1$} & $0$ & $3$ & $2$  & \multicolumn{4}{c|}{}                   \\ \cline{9-12}
\multicolumn{8}{|c|}{}                                                              & \multicolumn{1}{c}{$2$} & $3$ & $0$ & $1$  & \multicolumn{4}{c|}{}                   \\
\multicolumn{8}{|c|}{}                                                              & \multicolumn{1}{c}{$3$} & $2$ & $1$ & $0$  & \multicolumn{4}{c|}{}                   \\ \cline{9-16} 
\multicolumn{8}{|c|}{}                                                              & \multicolumn{4}{c|}{\multirow{4}{*}{$C^T$}}& $0$    & $1$    & $2$    & $3$       \\
\multicolumn{8}{|c|}{}                                                              & \multicolumn{4}{c|}{}                      & $1$    & $0$    & $3$    & $2$       \\ \cline{13-16}
\multicolumn{8}{|c|}{}                                                              & \multicolumn{4}{c|}{}                      & $2$    & $3$    & $0$    & $1$       \\
\multicolumn{8}{|c|}{}                                                              & \multicolumn{4}{c|}{}                      & $3$    & $2$    & $1$    & $0$       \\ \hline
\end{tabular}
\end{center}
\end{footnotesize}
Denote by $M$ the set of integers $m$ for which there exists a collection of three special Latin squares of order $16$ which pairwise agree in exactly $m$ of the $112$ cells above the $2\times 2$ diagonal blocks.
\begin{lemma}{\label{Latin-16-Intersection}}
$M=[16,97]\cup \{100,103,112\}$.
\end{lemma}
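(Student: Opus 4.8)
The plan is to exploit the recursive block structure of the special Latin square of order $16$ and reduce the determination of $M$ to a sumset of the known $3$-way intersection numbers $J'^3(4)$ and $J'^3(8)$. First I would record how the $112$ cells lying above the $2\times 2$ diagonal blocks partition into four groups: the $64$ cells of the off-diagonal block $B$ (rows $0$–$7$, columns $8$–$15$), the $16$ cells of block $A$, the $16$ cells of block $C$, and the remaining $16$ cells occupied by the fixed off-diagonal $2\times 2$ blocks carrying the symbols $2$ and $3$ (there are four such blocks above the diagonal, two inside the top-left order-$8$ corner and two inside the bottom-right order-$8$ corner, each contributing $4$ cells). Since these $\{2,3\}$-blocks have identical entries in every special Latin square, the corresponding $16$ cells agree automatically in any triple of special squares and contribute the additive constant $16$ to every element of $M$.

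Next I would observe that a special Latin square of order $16$ is completely determined by the three independent choices of the order-$4$ Latin squares $A$ and $C$ and the order-$8$ Latin square $B$, none of which is constrained beyond being a Latin square of the stated order on the stated symbols. Consequently, for a triple of special squares agreeing in exactly $m$ of the $112$ cells and having all three entries distinct on the remaining $112-m$ cells, the agreement splits as $m = 16 + a_A + a_C + a_B$, where $a_A,a_C$ are the $3$-way intersection numbers realized on the order-$4$ blocks and $a_B$ that realized on the order-$8$ blocks; the all-different condition on the complement of each block forces $a_A,a_C\in J'^3(4)$ and $a_B\in J'^3(8)$. Conversely, any prescribed $a_A,a_C\in J'^3(4)$ and $a_B\in J'^3(8)$ are simultaneously realizable by choosing the blocks independently in each of the three squares. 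Hence $M = 16 + \bigl(J'^3(4)+J'^3(4)\bigr) + J'^3(8)$, and both inclusions follow at once from this single identity.

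Finally I would evaluate the sumset using Theorem \ref{3way-Latin}, which gives $J'^3(4)=\{0,1,4,16\}$ and $J'^3(8)=I'^3(8)=[0,49]\cup\{52,55,64\}$. A short computation yields $J'^3(4)+J'^3(4)=\{0,1,2,4,5,8,16,17,20,32\}$. Adding $[0,49]$ to the extreme terms $0$ and $32$ already covers $[0,81]$, so the part coming from $a_B\in[0,49]$ contributes $16+[0,81]=[16,97]$. The three large terms $52,55,64$ of $J'^3(8)$, combined with $J'^3(4)+J'^3(4)$, produce among the sums exceeding $81$ exactly $84$ (from $32+52$ and from $20+64$), $87$ (from $32+55$) and $96$ (from $32+64$), giving the isolated points $100,103,112$ after adding $16$. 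The crux of the argument is verifying that no value in $\{82,83,85,86\}\cup[88,95]$ can occur: this amounts to checking that every sum of a term of $J'^3(4)+J'^3(4)$ with one of $52,55,64$ that exceeds $81$ lands in $\{84,87,96\}$, which, although elementary, is the delicate step that pins down the three gaps. Assembling these pieces gives $M=[16,97]\cup\{100,103,112\}$.
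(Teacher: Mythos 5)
Your proposal is correct and takes essentially the same approach as the paper: both decompose each $m\in M$ as $m=16+a+c+b$ with $a,c\in J'^3(4)$ and $b\in J'^3(8)$ (the $16$ coming from the fixed cells containing symbols $2$ and $3$), and then evaluate the resulting sumset using the known sets $J'^3(4)=\{0,1,4,16\}$ and $J'^3(8)=[0,49]\cup\{52,55,64\}$. You merely spell out what the paper leaves implicit, namely the cell-count partition, the independent realizability of the blocks $A$, $B$, $C$ (which gives the reverse inclusion), and the explicit sumset arithmetic pinning down the gaps at $98,99$ and $[101,111]\setminus\{103\}$.
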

\begin{proof}
Let $m\in M$, write $m=a+b+c+d$ where $a,c\in J'^3(4)$, $b\in J'^3(8)$ and $d=16$ (the number of cells which contain elements $2$ and $3$). By Theorem \ref{3way-Latin} $J'^3(4)=\{0,1,4,16\}$ and $J'^3(8)=[0,49]\cup \{52,55,64\}$, so $m\in [16,97]\cup \{100,103,112\}$. 
\end{proof}
\begin{theorem}{\label{J^3_F(15)}}
$J^3_F(15)=I^3_F(15)$.
\end{theorem}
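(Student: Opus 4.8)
The plan is to prove the nontrivial inclusion $I^3_F(15)\subseteq J^3_F(15)$, the reverse inclusion being immediate from Lemma \ref{necessary conditions}. First I would record the target set explicitly: since $\frac{2\cdot 15\cdot 14}{3}=140$, we have $I^3_F(15)=S^3[140]=[0,132]\cup\{134,140\}$. I would then cover this set by three overlapping pieces, a ``top'', a ``middle'', and a ``bottom'' range, each handled by a different tool already available in the paper. The top range is the easiest: applying Corollary \ref{cor-k>(2r(r-3)/3)} with $r=15$, where $\frac{2r(r-3)}{3}=120$, every $k\in I^3_F(15)$ with $k\ge 120$ lies in $J^3_F(15)$, so $[120,132]\cup\{134,140\}$ is settled at once.

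For the middle range I would imitate Lemma \ref{STS(15)12,13,16} one level up, running the $STS(2v+1)$-from-$STS(v)$ construction (Construction \ref{STS(2v+1)-from-STS(v)}) with $v=15$ and the special Latin squares of order $16$. Concretely, take $X_1=\{\infty_1,\dots,\infty_{15}\}$ and $X_2=\{1,\dots,16\}$, let the three $STS(15)$s on $X_1$ share a flower at $\infty_1$ and have $3$-way flower intersection number $\ell\in J^3_F(7)$, and glue on three special Latin squares of order $16$ agreeing on $m$ of the $112$ cells above the $2\times 2$ diagonal blocks, with $m\in M$. The point $\infty_1$ then lies in the $7$ flower triples coming from the $STS(15)$ part and in the $8$ triples coming from the (structurally fixed) symbol-$1$ cells, giving a common flower of exactly $15$ triples, while the remaining common triples number $\ell+m$; hence $\ell+m\in J^3_F(15)$. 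Using $J^3_F(7)\supseteq[0,8]\cup[10,13]\cup\{16,22,28\}$ (Theorem \ref{J^3_F(7)}) and $M=[16,97]\cup\{100,103,112\}$ (Lemma \ref{Latin-16-Intersection}), a short interval computation gives $J^3_F(7)+M\supseteq[16,125]$, which together with the top range already yields $[16,132]\cup\{134,140\}$.

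It remains to cover the bottom range $[0,16]$, and here I would invoke the doubling construction of Theorem \ref{STS(2v+1) Construction} with $r=7$, which gives $k+8(s-1)\in J^3_F(15)$ for every $k\in J^3_F(7)$ and $s\in\{1,\dots,12,15\}$. Taking $s=1$ gives $J^3_F(7)\subseteq J^3_F(15)$, hence $[0,8]\cup[10,13]$; taking $s=2$ gives $J^3_F(7)+8\subseteq J^3_F(15)$, hence $[8,16]$. Their union is all of $[0,16]$. Combining the three ranges yields $[0,132]\cup\{134,140\}=I^3_F(15)$, completing the argument.

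The only genuinely delicate point is the trio $\{9,14,15\}$: these are precisely the values left undetermined in $J^3_F(7)$, so they cannot be imported through the $s=1$ specialization, and they fall strictly below the range $[16,\,\cdot\,]$ produced by the order-$16$ construction. The crucial observation, and the step I expect to be the main obstacle to spot, is that the $s=2$ shift of the doubling construction catches exactly these three values (namely $9=1+8$, $14=6+8$, $15=7+8$ with $1,6,7\in J^3_F(7)$), so no ad hoc or computer construction for order $31$ is needed. I would also take care over the flower bookkeeping in the middle step, verifying that the eight symbol-$1$ cells lie inside the $2\times 2$ diagonal blocks and are therefore disjoint from the $112$ counted cells, so that the flower size is exactly $7+8=15$ and the counted agreements contribute $\ell+m$ and nothing more.
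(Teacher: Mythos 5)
Your proposal is correct and follows essentially the same route as the paper: the special Latin squares of order $16$ glued onto three $STS(15)$s sharing a flower (giving $J^3_F(7)+M\supseteq[16,125]$), Theorem \ref{STS(2v+1) Construction} with $r=7$ for the bottom range $[0,16]$, and Corollary \ref{cor-k>(2r(r-3)/3)} for the top. Your only additions are presentational: you make explicit the $s=2$ shift that catches $9,14,15$ (which the paper leaves implicit in its claim $[0,15]\subseteq J^3_F(15)$) and you verify the flower bookkeeping ($7+8=15$ triples at $\infty_1$) that the paper asserts without comment.
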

\begin{proof}
Let $M$ be the set of integers $m$ for which there exists a collection of three special Latin squares of order $16$, $L^{(n)}_{ij}$ for $n=1,2,3$, which pairwise agree in exactly $m$ of the $112$ cells above the $2\times 2$ diagonal blocks. We construct a collection of three $STS(31)$s with $3$-way flower intersection number $h$ for $h\in [16,125]\cup \{128,131,134,140\}$. Write $h=\ell+m$ where $\ell\in J^3_F(7)$ and $m\in M$. Let $X_1=\{\infty_i:~1\leq i\leq 15\}$, $X_2=\{1,2,\ldots,16\}$ and $(X_1,\mathcal{B}_n)$ for $n=1,2,3$ be three $STS(15)$s with $3$-way intersection number $\ell +7$ where seven of these will constitute the flower at $\infty_1$. It is not hard to check that $(X_1\cup X_2,\mathcal{B}_n\cup \mathcal{C}_n)$ for $n=1,2,3$ are three $STS(31)$s, where $\mathcal{C}_n=\{\{\infty_s,i,j\}|~\ell^{(n)}_{ij}=s,~1\leq i < j\leq 16,~1\leq s\leq 15\}$ which have $3$-way intersection number $\ell +m+15$, fifteen of these will constitute the flower at $\infty_1$. So $h=\ell +m\in J^3_F(15)$. By Theorem \ref{J^3_F(7)}, $\ell \in [0,8]\cup [10,13]\cup \{16,22,28\}$ and by Lemma \ref{Latin-16-Intersection}, $m\in [16,97]\cup \{100,103,112\}$. So $[16,125]\cup \{128,131,134,140\}\subseteq J^3_F(15)$. By Theorem \ref{STS(2v+1) Construction} for $r=7$, $[0,15]\subseteq J^3_F(15)$. Existence of the remaining flower intersection numbers is guaranteed by Corollary \ref{cor-k>(2r(r-3)/3)}.
\end{proof}
\begin{theorem}{\label{J^3_F(24)}}
$I^3_F(24)\setminus [1,15]\subseteq J^3_F(24)$.
\end{theorem}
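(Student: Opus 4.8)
The plan is to realize $I^3_F(24)\setminus[1,15]$ in three pieces. Since $\tfrac{2\cdot24\cdot23}{3}=368$, we have $I^3_F(24)=[0,360]\cup\{362,368\}$, so the target set is $\{0\}\cup[16,360]\cup\{362,368\}$. The endpoint $0$ is immediate from Lemma~\ref{0-in-J^3_F} (here $r=24\ge4$ is admissible). For the upper end, $\tfrac{2\cdot24\cdot21}{3}=336$, so Corollary~\ref{cor-k>(2r(r-3)/3)} delivers every $k\in I^3_F(24)$ with $k\ge336$, namely all of $[336,360]\cup\{362,368\}$. The whole difficulty is therefore to produce the interval $[16,335]$.

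I would obtain this from the PBD construction (Theorem~\ref{PBD Construction}), and this route is essentially forced: the tripling constructions (Theorems~\ref{STS(3v) Construction}--\ref{STS(3v+3) Construction}) would require the inadmissible value $J^3_F(8)$, since $24=3\cdot 8$, while the doubling construction (Theorem~\ref{STS(2v+1) Construction}) only produces odd flower parameters, so neither applies to the even value $r=24$. Concretely, apply Theorem~\ref{PBD Construction} to a $B(\{3,4,6\},1,24)$ having exactly four blocks of size $3$, two blocks of size $6$, and $39$ blocks of size $4$; the pair count $4\cdot3+2\cdot15+39\cdot6=276=\binom{24}{2}$ is consistent. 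Using $J^3_F(3)=\{4\}$, $J^3_F(4)=\{0,8\}$ and $[0,5]\subseteq J^3_F(6)$ from Theorems~\ref{J^3_F(3)},~\ref{J^3_F(4)} and~\ref{J^3_F(6)}, the four triples force a fixed contribution of $16$, the size-$4$ blocks contribute any multiple of $8$ in $\{0,8,\dots,312\}$, and the two size-$6$ blocks contribute any value in $[0,10]$. Hence
\[
\sum_{B}k_B\in 16+\bigl(\{0,8,\dots,312\}+[0,10]\bigr)=[16,338].
\]
The key point is that the length-$10$ interval supplied by the two size-$6$ blocks bridges the gaps of size $8$ between consecutive multiples of $8$, because each translate $8j+[0,10]$ overlaps $8(j+1)+[0,10]$; this is the interlocking trick that turns the coarse size-$4$ values into a genuine interval. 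Thus $[16,338]\subseteq J^3_F(24)$, which covers $[16,335]$, and combining the three pieces yields $\{0\}\cup[16,360]\cup\{362,368\}=I^3_F(24)\setminus[1,15]$.

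The hard part will be the auxiliary design. I must guarantee a $B(\{3,4,6\},1,24)$ with precisely four triples and at least two size-$6$ blocks: the four triples are exactly what pins the fixed offset at $16$, and they are also the structural reason the values $[1,15]$ stay out of reach (any PBD on $24$ points usable here appears to be forced to carry these triples, so the smallest attainable interior intersection is $16$, leaving $[1,15]$ undecided). Establishing existence of such a PBD — most plausibly by an explicit construction on $24$ points rather than a recursion — is the crux; once it is in hand, the only remaining verification is that the two size-$6$ blocks really realize every value of $[0,10]$, so that the Minkowski sum closes up into $[16,338]$ without gaps.
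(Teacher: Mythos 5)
Your decomposition is exactly the paper's: $0$ from Lemma~\ref{0-in-J^3_F}, the tail $[336,360]\cup\{362,368\}$ from Corollary~\ref{cor-k>(2r(r-3)/3)} (since $\tfrac{2\cdot24\cdot21}{3}=336$), and the middle interval from Theorem~\ref{PBD Construction} applied to a $B(\{3,4,6\},1,24)$ with four triples, two blocks of size $6$ and $39$ blocks of size $4$, with the same bookkeeping $16+\{0,8,\ldots,312\}+[0,10]=[16,338]$, the length-$10$ interval from the two size-$6$ blocks bridging the gaps of $8$. The one step you leave open --- existence of that PBD --- is precisely what the paper supplies by citation rather than by explicit construction: there is a $\{4\}$-GDD of type $3^46^2$ \cite{4-GDD-1997}, and (as noted in Section~2 of the paper) adjoining the groups as blocks turns a GDD into a PBD, so its four groups of size $3$ and two groups of size $6$ together with its $39$ blocks of size $4$ give exactly the $B(\{3,4,6\},1,24)$ you require. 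With that citation your argument is complete and coincides with the paper's proof. One caution: your parenthetical claim that any usable PBD on $24$ points is ``forced to carry these triples,'' so that $[1,15]$ is structurally out of reach, is speculation and not part of what must be proved --- the theorem asserts only an inclusion, and the paper makes no such converse claim.
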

\begin{proof}
By Lemma \ref{0-in-J^3_F}, $0\in J^3_F(24)$ and by Corollary \ref{cor-k>(2r(r-3)/3)}, $[336,360]\cup \{362,368\}\subseteq J^3_F(24)$. There exists a $\{4\}$-GDD of type $3^46^2$ \cite{4-GDD-1997}. All input designs required in Theorem \ref{PBD Construction} to achieve remaining intersection numbers, is guaranteed by Theorems \ref{J^3_F(3)}, \ref{J^3_F(4)} and \ref{J^3_F(6)}.
\end{proof}
\begin{theorem}{\label{J^3_F(60)}}
$J^3_F(60)=I^3_F(60)$.
\end{theorem}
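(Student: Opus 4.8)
The plan is to prove the two inclusions separately. The inclusion $J^3_F(60)\subseteq I^3_F(60)$ is immediate from Lemma \ref{necessary conditions}, so the whole task is to realize every value of $I^3_F(60)$. Since $\frac{2\cdot 60\cdot 59}{3}=2360$, we have $I^3_F(60)=S^3[2360]=[0,2352]\cup\{2354,2360\}$, and we must exhibit three $STS(121)$ with a common flower for each of these intersection numbers. The direct recursive theorems are not available here: writing $60=3\cdot 20$ would force the input to be an $STS(2\cdot 20+1)=STS(41)$, which does not exist, and Theorem \ref{6t-PBD} excludes exactly the value $t=10$ needed for $6t=60$. I would therefore route everything through the PBD construction of Theorem \ref{PBD Construction}.

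First I would fix a convenient PBD on $60$ points. Take a transversal design $TD(4,15)$, that is, a $\{4\}$-GDD of type $15^4$; it exists because two (indeed four, by Lemma \ref{4Mols}) mutually orthogonal Latin squares of order $15$ exist. Viewing its four groups as blocks, this is a $B(\{4,15\},1,60)$ with four blocks of size $15$ and $225$ blocks of size $4$; both $15\equiv 0$ and $4\equiv 1\pmod 3$, so Theorem \ref{PBD Construction} applies. Feeding it the complete spectrum $J^3_F(15)=I^3_F(15)=[0,132]\cup\{134,140\}$ (Theorem \ref{J^3_F(15)}) on the size-$15$ blocks and $J^3_F(4)=\{0,8\}$ (Theorem \ref{J^3_F(4)}) on the size-$4$ blocks, every attainable total $\sum_{B}k_B$ lands in $J^3_F(60)$.

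The core of the argument is then a covering check. The contribution of the four groups is a value $G$ in the four-fold sumset of $I^3_F(15)$, which contains the whole interval $[0,4\cdot 132]=[0,528]$; the contribution of the $225$ quadruples is $8j$ with $j\in\{0,1,\dots,225\}$. Since the intervals $[8j,8j+528]$ overlap consecutively, their union is $[0,8\cdot 225+528]=[0,2328]$, so every $k\le 2328$ is realized. For the top I would take $j=225$ (contribution $1800$) and realize $G=k-1800$ as three groups at $140$ plus one group ranging over $[0,132]\cup\{134,140\}$: this gives $G\in[420,552]\cup\{554,560\}$, hence $k\in[2220,2352]\cup\{2354,2360\}$, which covers the remaining values $[2329,2352]\cup\{2354,2360\}$. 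Together with $[0,2328]$ this exhausts $I^3_F(60)$, and with the necessity direction we conclude $J^3_F(60)=I^3_F(60)$.

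The step I expect to be the main obstacle is not a single deep fact but the bookkeeping at the upper end: one must verify that the sporadic values $2354$ and $2360$, and the short gap-free run $[2329,2352]$ sitting just below the forbidden band $\{2353,2355,2356,2357,2358,2359\}$, are all hit using only the allowed group-sums $134,140$ and multiples of $8$. The reason this succeeds is structural: the maximum group contribution $4\cdot 140=560$ and the maximum block contribution $225\cdot 8=1800$ sum to exactly $2360=\frac{2\cdot 60\cdot 59}{3}$, so the construction reaches the very top, and the near-top admissible values of $I^3_F(15)$ align precisely with the excluded pattern of $I^3_F(60)$.
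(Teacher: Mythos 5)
Your proof is correct, and it takes a genuinely different route from the paper's. The paper splits the work in two: it first invokes Corollary \ref{cor-k>(2r(r-3)/3)} to secure the top band $[2280,2352]\cup\{2354,2360\}$, and then covers the remaining values $[0,2279]$ via Theorem \ref{PBD Construction} applied to a $\{4\}$-GDD of type $6^{10}$ (an existence result cited from the literature), with input spectra $J^3_F(4)=\{0,8\}$ and the only partially determined $J^3_F(6)\supseteq[0,5]\cup\{7,20\}$. This two-step split is forced on the paper precisely because the gaps in the known part of $J^3_F(6)$ (in particular $6\notin J^3_F(6)$ and $[8,19]\cap J^3_F(6)=\emptyset$) prevent the group sums from reaching the values just below the maximum, so the asymptotic corollary must patch the top. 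Your construction instead uses a single $B(\{4,15\},1,60)$ obtained from a $TD(4,15)$, whose existence already follows from Lemma \ref{4Mols} quoted in the paper, with input spectra $J^3_F(4)$ and the \emph{complete} spectrum $J^3_F(15)=[0,132]\cup\{134,140\}$ of Theorem \ref{J^3_F(15)}. Because the size-$15$ blocks carry a long interval plus exactly the right sporadic values $134,140$ at the top, your sumset bookkeeping ($[0,2328]$ from the overlapping translates, $[2220,2352]\cup\{2354,2360\}$ from three groups at $140$) reaches the entire set $I^3_F(60)$ in one stroke, with no appeal to Corollary \ref{cor-k>(2r(r-3)/3)} and no external GDD citation; there is also no circularity, since Theorem \ref{J^3_F(15)} is proved independently of $J^3_F(60)$. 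What the paper's version buys is uniformity with its general case $n\equiv 6 \pmod{18}$ (small blocks of sizes $4$ and $6$); what yours buys is self-containedness and a cleaner single-construction argument.
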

\begin{proof}
By Corollary \ref{cor-k>(2r(r-3)/3)}, $[2280,2352]\cup \{2354,2360\}\subseteq J^3_F(60)$. There exists a $\{4\}$-GDD of type $6^{10}$ \cite{4-GDD-2014}. All input designs required in Theorem \ref{PBD Construction} to achieve remaining intersection numbers, is guaranteed by Theorems \ref{J^3_F(4)} and \ref{J^3_F(6)}.
\end{proof}
\begin{theorem}{\label{J^3_F(132)}}
$J^3_F(132)=I^3_F(132)$.
\end{theorem}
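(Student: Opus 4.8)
The plan is to split $I^3_F(132)$ at the threshold of Corollary \ref{cor-k>(2r(r-3)/3)}, exactly as in the proofs of Theorems \ref{J^3_F(60)} and \ref{J^3_F(24)}. For $r=132$ one has $\frac{2r(r-1)}{3}=11528$ and $\frac{2r(r-3)}{3}=11352$, so that $I^3_F(132)=[0,11520]\cup\{11522,11528\}$. The inclusion $J^3_F(132)\subseteq I^3_F(132)$ is immediate from Lemma \ref{necessary conditions}, so I would concentrate on the reverse inclusion. Note that the generic route through Theorem \ref{6t-PBD} is unavailable here, since $132=6\cdot 22$ and $22$ is one of the orders that theorem excludes; this is why $r=132$ must be settled directly.

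For the top of the range I would quote Corollary \ref{cor-k>(2r(r-3)/3)}: as $132\geq 9$ is admissible, every $k\in I^3_F(132)$ with $k\geq 11352$ already lies in $J^3_F(132)$, which disposes of $[11352,11520]\cup\{11522,11528\}$. It then remains only to realise every value in $[0,11351]$.

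For that I would use a $\{4\}$-GDD of type $6^{22}$, whose existence follows from the standard existence theory for $4$-GDDs (cited as in the companion cases), regarded as a $B(\{4,6\},1,132)$ in which the $22$ groups are the blocks of size $6$ and the $1386$ blocks of the GDD have size $4$. Since $4\equiv 1$, $6\equiv 0$ and $132\equiv 0\pmod 3$, Theorem \ref{PBD Construction} applies with $k_B\in J^3_F(4)=\{0,8\}$ on the size-$4$ blocks (Theorem \ref{J^3_F(4)}) and $k_B\in J^3_F(6)\supseteq[0,5]\cup\{7,20\}$ on the size-$6$ blocks (Theorem \ref{J^3_F(6)}), giving $\sum_{B}k_B\in J^3_F(132)$. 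The size-$4$ blocks alone realise every multiple of $8$ in $\{0,8,\dots,11088\}$, while the $22$ size-$6$ blocks, using values in $[0,5]\cup\{20\}$, realise every integer in $[0,263]$: the values $[0,5]$ give the full interval $[0,110]$, and taking $j$ blocks equal to $20$ with the rest ranging over $[0,5]$ extends the reach to $[0,263]$ with no gap. Since $263\geq 7$, consecutive translates $8a+[0,263]$ overlap, so $\bigcup_{a=0}^{1386}\bigl(8a+[0,263]\bigr)=[0,11351]$, which is exactly the missing range.

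Putting the two ranges together gives $[0,11520]\cup\{11522,11528\}=I^3_F(132)\subseteq J^3_F(132)$, and combined with Lemma \ref{necessary conditions} this yields $J^3_F(132)=I^3_F(132)$. The step needing the most care is the bookkeeping just below the corollary's threshold: the size-$4$ contributions on their own stall at $11088+110=11198$, so the stretch $[11199,11351]$ can only be reached by pushing the size-$6$ blocks above $5$. The crux is thus that the partial information $[0,5]\cup\{7,20\}\subseteq J^3_F(6)$ is already enough to let the $22$ size-$6$ blocks sum to every value up to $263$; the elementary sumset count above confirms this, and beyond it the existence of a $\{4\}$-GDD of type $6^{22}$ is the only external ingredient.
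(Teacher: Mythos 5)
Your proof is correct, and it follows the paper's overall two-step framework (Corollary \ref{cor-k>(2r(r-3)/3)} for $[11352,11520]\cup\{11522,11528\}$, then Theorem \ref{PBD Construction} for $[0,11351]$), but with a genuinely different decomposition. The paper uses a $\{4\}$-GDD of type $9^{12}24^{1}$ (citing Wei--Ge) and feeds Theorem \ref{PBD Construction} with input values from $J^3_F(4)$, $J^3_F(9)$ and $J^3_F(24)$ --- two of which are themselves only partially determined sets --- and it leaves the covering bookkeeping entirely implicit. You instead take a $\{4\}$-GDD of type $6^{22}$, so the only inputs are $J^3_F(4)=\{0,8\}$ on the $1386$ blocks of size $4$ and the partial set $[0,5]\cup\{7,20\}\subseteq J^3_F(6)$ on the $22$ groups; your sumset computation ($[0,5]$ on the groups gives $[0,110]$, pushing $j$ groups to $20$ gives $[20j,110+15j]$ with consecutive overlap, hence $[0,263]$, and then $\bigcup_{a}\bigl(8a+[0,263]\bigr)=[0,11351]$) is explicit and checks out, including the observation that the stretch $[11199,11351]$ genuinely requires groups valued above $5$. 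Your route exactly mirrors the paper's own treatment of $r=60$ (type $6^{10}$) and is arguably cleaner: it avoids any reliance on $J^3_F(24)$, whose determination in the paper itself depends on $J^3_F(6)$, and it makes verifiable what the paper dismisses with ``all input designs required \ldots is guaranteed.'' The one thing you should tighten is the existence citation: ``standard existence theory'' is vague, but the $4$-GDD of type $6^{22}$ does exist by the classical Brouwer--Schrijver--Hanani result on uniform $4$-GDDs of type $g^{u}$ (the only exceptional types being $2^4$ and $6^4$; see Section IV of \cite{Handbook}), so this is a citation gap, not a mathematical one.
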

\begin{proof}
By Corollary \ref{cor-k>(2r(r-3)/3)}, $[11352,11520]\cup \{11522,11528\}\subseteq J^3_F(132)$. There exists a $\{4\}$-GDD of type $9^{12}24^{1}$ \cite{4-GDD-2013}. All input designs required in Theorem \ref{PBD Construction} to achieve remaining intersection numbers, is guaranteed by Theorems \ref{J^3_F(4)}, \ref{J^3_F(9)} and \ref{J^3_F(24)}.
\end{proof}
\section{Main results}
Now, we are in position to present the main theorem.
\begin{theorem}\emph{\textbf{(Main Theorem)}}

Let $S^3[m]$ denote the set of non-negative integers less than or equal to $m$, with the exception of $m-1$, $m-2$, $m-3$, $m-4$, $m-5$ and $m-7$ and let $I^3_F(n)=S^3[\frac{2n(n-1)}{3}]$.
\\For $n\equiv 0,1$ (mod $3$), $n\geq 10$ but $n\neq 24$, $J^3_F(n)=I^3_F(n)$. $J^3_F(3)=\{4\}$, $J^3_F(4)=\{0,8\}$, $[0,5]\cup \{7,20\}\subseteq J^3_F(6)\subseteq [0,7]\cup \{20\}$, $[0,8]\cup [10,13]\cup \{16,22,28\}\subseteq J^3_F(7)\subseteq [0,16]\cup \{22,28\}$, $I^3_F(9)\setminus \{4,6,7,9,11,34,35\}\subseteq J^3_F(9)$ and $I^3_F(24)\setminus [1,15]\subseteq J^3_F(24)$.
\end{theorem}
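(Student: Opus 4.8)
The plan is to establish the two inclusions of $J^3_F(n)=I^3_F(n)$ separately. The inclusion $J^3_F(n)\subseteq I^3_F(n)$ is handed over directly by Lemma \ref{necessary conditions}, so all the content is in $I^3_F(n)\subseteq J^3_F(n)$. The values for $n\in\{3,4,6,7,9,24\}$ are read off from Theorems \ref{J^3_F(3)}, \ref{J^3_F(4)}, \ref{J^3_F(6)}, \ref{J^3_F(7)}, \ref{J^3_F(9)} and \ref{J^3_F(24)}, and those for $n\in\{10,15,60,132\}$ from Theorems \ref{J^3_F(10)}, \ref{J^3_F(15)}, \ref{J^3_F(60)} and \ref{J^3_F(132)}; these last four orders are exactly the ones at which the generic reduction below stalls, so I would take them, together with the small orders $\{3,4,6,7,9\}$, as the base of an induction on $n$ proving $I^3_F(n)\subseteq J^3_F(n)$ for every admissible $n\geq 10$ with $n\neq 24$. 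In each inductive step I would write $I^3_F(n)$ as the union of a \emph{high} block $[\frac{2n(n-3)}{3},\frac{2n(n-1)}{3}]\cap I^3_F(n)$, which Corollary \ref{cor-k>(2r(r-3)/3)} supplies for free, and a \emph{low} interval $[0,N]$ produced by a recursive construction; these two cover all of $I^3_F(n)$ as soon as $N\geq \frac{2n(n-3)}{3}-1$.

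The inductive step is organized by $n$ modulo $9$, and it will always be enough to feed the constructions only the two extreme flower numbers $0$ and $\frac{2m(m-1)}{3}$ of each smaller order $m$ (available from Lemmas \ref{0-in-J^3_F} and \ref{2r(r-1)/3-in-J^3_F}) together with a full initial interval of Latin agreements. For $n\equiv 0,3$ (mod $9$) write $n=3r$ with $r$ admissible and apply Theorem \ref{STS(3v+1) Construction}: the three flower parameters taken from $\{0,\frac{2r(r-1)}{3}\}\subseteq J^3_F(r)$ and $b$ taken from $[0,(2r)^2-15]\subseteq J'^3(2r)$ (Lemma \ref{3way-Latin}) yield four overlapping translates of $[0,(2r)^2-15]$ whose union is a contiguous interval reaching past $\frac{2n(n-3)}{3}$ once $r\geq 4$. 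For $n\equiv 1,7$ (mod $9$) write $n=3r+1$ and use Theorem \ref{STS(3v+3) Construction}, which descends to the admissible order $r+1$ on the strength of the full interval in $J'^3(2r)$; only for $n\equiv 4$ (mod $9$), where $r\equiv 1$ (mod $3$) forbids that construction, would I resort to Theorem \ref{STS(3v) Construction}, drawing $a_1,a_2$ from $J^3(2r+1)$ (Lemma \ref{3STS}), $a$ from $J^3_F(r)$, and the off-diagonal Latin count $b$ from the secondary-diagonal numbers of Theorem \ref{Latin J''^3(2n+1)}. Every reduction passes to an order strictly below $n$, so the induction is well founded.

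The genuinely separate residue is $n\equiv 6$ (mod $9$), where $n/3\equiv 2$ (mod $3$) is inadmissible and no tripling construction applies. For odd such $n$ I would use Theorem \ref{STS(2v+1) Construction} with the admissible value $r=(n-1)/2$: since $J^3_F(r)=I^3_F(r)$ already contains a long initial interval by induction while the step $r+1$ is comparatively short, the numbers $k+(s-1)(r+1)$ sweep out a full low interval. For even $n=6t$ with $t\equiv 1$ (mod $3$), $t\geq 5$ and $t\notin\{6,10,18,22\}$, Theorem \ref{6t-PBD} provides a $B(\{6,t\},1,n)$ with admissible block sizes $6$ and $t$, and Theorem \ref{PBD Construction} converts the flower numbers of the ingredient designs (from Theorem \ref{J^3_F(6)} for the sixes and from the induction, or the small-order theorems, for the $t$'s) into the required sums; the finitely many excluded $t$ are exactly what force the bespoke orders $n=24,60,132$. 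Because the overlap argument needs only the extremes $0$ and $\frac{2m(m-1)}{3}$, the merely partial knowledge of $J^3_F(6),J^3_F(7),J^3_F(9)$ does no harm, and the orders $n\in\{16,18,19,21,22,25,27\}$ that reduce to them go through unscathed. I expect the main obstacle to lie in two residual places: the smallest instance $n=13$ of the class $n\equiv 4$ (mod $9$), whose only reduction (Theorem \ref{STS(3v) Construction} from $r=4$) passes through order-$9$ secondary-diagonal Latin squares controlled merely by the sparse lower bound of Theorem \ref{Latin J''^3(2n+1)}, so that a direct construction in the manner of Section 3 is likely required; and the even orders $n\equiv 6$ (mod $18$), where one must check that summing the $t^2$ size-$6$ blocks---each contributing the interval $[0,5]\subseteq J^3_F(6)$---still produces a contiguous low interval meeting the Corollary's range.
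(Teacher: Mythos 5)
Your overall architecture is the same as the paper's: split $I^3_F(n)$ into a high block supplied by Corollary \ref{cor-k>(2r(r-3)/3)} and a low interval built recursively, organize the recursion by $n$ modulo $9$ (and modulo $18$ for $n\equiv 6$), use Theorem \ref{STS(3v+1) Construction} for $n\equiv 0,3$, Theorem \ref{STS(3v+3) Construction} for $n\equiv 7$, Theorem \ref{STS(3v) Construction} where tripling forces it, Theorem \ref{STS(2v+1) Construction} for $n\equiv 15 \pmod{18}$, and the PBD machinery of Theorems \ref{6t-PBD} and \ref{PBD Construction} for $n\equiv 6\pmod{18}$, with base orders $\{3,4,6,7,9,10,15,24,60,132\}$. (Your one routing difference --- handling $n\equiv 1\pmod 9$ through Theorem \ref{STS(3v+3) Construction} with $r\equiv 0\pmod 3$ rather than through Theorem \ref{STS(3v) Construction} as the paper does --- is legitimate and, if anything, avoids the harder secondary-diagonal Latin square input for that residue class.)

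The genuine gap is $n=13$, which you explicitly leave unresolved, conjecturing that Theorem \ref{Latin J''^3(2n+1)} is ``too sparse'' and that a bespoke construction is needed. The Main Theorem asserts $J^3_F(13)=I^3_F(13)$, so this case cannot be waived, and in fact the paper's route does close it; your assessment of the obstacle is incorrect. Run the numbers: with $r=4$, Theorem \ref{STS(3v) Construction} takes $a_1,a_2\in J^3(9)=\{0,1,3,4,12\}$ and $a\in J^3_F(4)=\{0,8\}$, so $a_1+a_2+a$ covers all of $[0,16]$ (plus $\{20,21,23,24,32\}$). The off-diagonal agreement count $b$ ranges, by Theorem \ref{Latin J''^3(2n+1)} with $n=4$, over $\{0,1,4,16\}+\{0,5,20\}+\{0,9,36\}$, a $30$-element set whose largest gap between consecutive members is $12$ (from $60$ to $72$). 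Since $12<17$, adding the full interval $[0,16]$ to this sparse set yields the contiguous interval $[0,88]$. Corollary \ref{cor-k>(2r(r-3)/3)} with $r=13$ supplies every $k\in I^3_F(13)$ with $k\geq\lceil 260/3\rceil=87$, i.e.\ $[87,96]\cup\{98,104\}$, and the union is exactly $I^3_F(13)=[0,96]\cup\{98,104\}$. So the ``sparse'' bound is precisely strong enough, because the gaps it leaves are narrower than the interval generated by the small input sets; your proof needs this verification spelled out rather than deferred to a hypothetical direct construction. (Your other flagged worry, the $n\equiv 6\pmod{18}$ case, also resolves: the $t^2$ blocks of size $6$ with values in $[0,5]\cup\{7,20\}$ already sum to a contiguous interval $[0,20t^2-O(1)]$, and the six size-$t$ blocks push it past the Corollary's threshold $24t^2-12t$; this should likewise be checked explicitly, but there is no obstruction.)
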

\begin{proof}
The proof is based on recursive constructions where expressed before. For any admissible $n$, consider the following five cases. It is worth mentioning that in all cases, to construct $STS(2n+1)$, recursive constructions may use small $STS$s of order $2m+1$ where $n$ and $m$ are not congruent modulo $9$.     
\begin{enumerate}
\item $n\equiv 1,4$ (mod $9$)

$J^3_F(1)$, $J^3_F(4)$ and $J^3_F(10)$ have been obtained in Theorems \ref{J^3_F(1)}, \ref{J^3_F(4)} and \ref{J^3_F(10)}, so let $n\geq 13$. Let $n=3r+1$, clearly $r$ is admissible, since $n\equiv 1,4$ (mod $9$). All required objects in Theorem \ref{STS(3v) Construction}, is guaranteed by Theorems \ref{3STS} and \ref{Latin J''^3(2n+1)} and the $3$-way flower intersection numbers of $STS(2r+1)$.
\item $n\equiv 0,3$ (mod $9$)

$J^3_F(3)$ and $J^3_F(9)$ have been obtained in Theorems \ref{J^3_F(3)} and \ref{J^3_F(9)}, so let $n\geq 12$. Let $n=3r$, clearly $r$ is admissible, since $n\equiv 0,3$ (mod $9$). All required objects in Theorem \ref{STS(3v+1) Construction}, is guaranteed by Theorem \ref{3way-Latin} and the $3$-way flower intersection numbers of $STS(2r+1)$.
\item $n\equiv 7$ (mod $9$)

$J^3_F(7)$ has been obtained in Theorem \ref{J^3_F(7)}, so let $n\geq 16$. Let $n=3r+1$, clearly $r\equiv 0,2$ (mod $3$) (it means $r+1$ is admissible), since $n\equiv 7$ (mod $9$). All required objects in Theorem \ref{STS(3v+3) Construction}, is guaranteed by Theorem \ref{3way-Latin} and the $3$-way flower intersection numbers of $STS(2r+3)$.
\item $n\equiv 15$ (mod $18$) (It means $n=9k+6$, where $k$ is odd).

$J^3_F(15)$ has been obtained in Theorem \ref{J^3_F(15)}, so let $n\geq 33$. Let $n=2r+1$, clearly $r$ is admissible, since $n\equiv 15$ (mod $18$). All required objects in Theorem \ref{STS(2v+1) Construction}, is guaranteed by the $3$-way flower intersection numbers of $STS(2r+1)$.
\item $n\equiv 6$ (mod $18$) (It means for $n=9k+6$, where $k$ is even).

$J^3_F(6)$, $J^3_F(24)$, $J^3_F(60)$ and $J^3_F(132)$ have been obtained in Theorems \ref{J^3_F(6)}, \ref{J^3_F(24)}, \ref{J^3_F(60)} and \ref{J^3_F(132)}, so let $n\geq 42$ and $n\notin \{60,132\}$. Let $n=6t$, clearly $t\equiv 1$ (mod $3$), $t\geq 7$ and $t\notin \{10,22\}$. By Theorem \ref{6t-PBD}, there exists a $B(\{6,t\},1,6t)$ and since $t$ and $6$ are admissible, by Theorem \ref{STS-from-PBD}, there exists an $STS(12t+1)$. All required objects in Theorem \ref{PBD Construction}, is guaranteed by Theorem \ref{J^3_F(6)} and the $3$-way flower intersection numbers of $STS(2t+1)$.
\end{enumerate} 
\end{proof}

\acknowledgements
The authors of this article are thankful to Mr. Bakhshi and Mr. Soltani for helping us to find some small cases with the computer programming. The first author would like to thank the Iran National Science Foundation (INSF) for the financial support of the project.



\label{sec:biblio}

\end{document}